\setlist[enumerate,1]{label=(\roman*)}
\providecommand{\texorpdfstring}[2]{#1}
\g@addto@macro\bfseries{\boldmath}
\theoremstyle{plain}
\newtheorem{theorem}{Theorem}[section]
\newtheorem{proposition}[theorem]{Proposition}
\newtheorem{lemma}[theorem]{Lemma}
\newtheorem{corollary}[theorem]{Corollary}
\theoremstyle{definition}
\newtheorem{definition}[theorem]{Definition}
\newtheorem{remark}[theorem]{Remark}
\newtheorem{example}[theorem]{Example}
\numberwithin{equation}{section}
\newcommand{\Lie}[1]{\operatorname{#1}}
\newcommand{\lie}[1]{\operatorname{\mathfrak{#1}}}
\newcommand{\G}{\Lie{G}}
\newcommand{\GL}{\Lie{GL}}
\newcommand{\SL}{\Lie{SL}}
\newcommand{\SO}{\Lie{SO}}
\newcommand{\Sp}{\Lie{Sp}}
\newcommand{\SU}{\Lie{SU}}
\newcommand{\so}{\lie{so}}
\newcommand{\su}{\lie{su}}
\newcommand{\lt}{\lie{t}}
\newcommand{\Hodge}{\mathop{}\!{*}}
\newcommand{\hook}{\mathbin{\lrcorner}}
\newcommand{\bC}{\mathbb{C}}
\newcommand{\bH}{\mathbb{H}}
\newcommand{\bR}{\mathbb{R}}
\newcommand{\cU}{\mathcal{U}}
\newcommand{\tW}{\widetilde{W}}
\DeclareMathOperator{\tr}{tr}
\DeclareMathOperator{\re}{Re}
\DeclareMathOperator{\im}{Im}
\DeclareMathOperator{\diag}{diag}
\DeclareMathOperator{\rank}{rank}
\DeclareMathOperator{\Stab}{Stab}
\DeclareMathOperator{\vol}{vol}
\DeclareMathOperator{\Hol}{Hol}
\DeclareMathOperator{\adj}{adj}
\DeclareFontFamily{U}{bigeuf}{}
\DeclareFontShape{U}{bigeuf}{m}{n}{<-6>s*[1.5]eufm5
<6-8>s*[1.5]eufm7
<8->s*[1.5]eufm10}{}
\DeclareSymbolFont{bigeufletters}{U}{bigeuf}{m}{n}
\DeclareMathSymbol{\sumcic}{\mathop}{bigeufletters}{`S}
\DeclarePairedDelimiter{\Span}{\langle}{\rangle}
\DeclarePairedDelimiter{\norm}{\lVert}{\rVert}
\DeclarePairedDelimiter{\abs}{\lvert}{\rvert}
\DeclarePairedDelimiter{\halfopen}{\lbrack}{\rparen}
\DeclarePairedDelimiter{\halfclosed}{\lparen}{\rbrack}
\newcommand{\with}{\mid}
\newcommand{\SetSymbol}[1][]{\nonscript\:#1|
  \allowbreak\nonscript\:\mathopen{}}
\DeclarePairedDelimiterX{\Set}[1]{\lbrace}{\rbrace}{%
  \renewcommand{\with}{\SetSymbol[\delimsize]}%
#1 }
\DeclarePairedDelimiterX{\inp}[2]{\langle}{\rangle}{#1,#2}
\newcommand{\D}[2]{\frac{\partial #1}{\partial #2}}
\newcommand{\Dsq}[2]{\frac{\partial^2 #1}{\partial #2^2}}
\newcommand{\Dsqm}[3]{\frac{\partial^2 #1}{\partial #2\partial #3}}
\newcommand{\any}{\,\cdot\,}
\newcommand{\eqbreak}[1][2]{\\&\hspace{#1em}}
\newcommand{\eqand}[1][1]{\hspace{#1em}\text{and}\hspace{#1em}}
\newcommand{\eqcond}[2][2]{\hspace{#1em}\text{#2}}
\newcommand{\hyphen}{\nobreak-\nobreak\hskip0pt}
\begin{document}

\title{Toric geometry of \( \G_2 \)-manifolds}

\author[T.~B. Madsen]{Thomas Bruun Madsen}

\address[T.~B. Madsen]{School of Computing, University of Buckingham\\
Hunter Street, Buckingham\\
MK18 1EG\\
United Kingdom\\
and
Centre for Quantum\\
Geometry of Moduli Spaces\\
Aarhus University\\
Ny Munkegade 118, Bldg 1530\\
8000 Aarhus\\
Denmark}

\email{thomas.madsen@buckingham.ac.uk}

\author[A.~F. Swann]{Andrew Swann}

\address[A.~F. Swann]{Department of Mathematics, Centre for Quantum
Geometry of Moduli Spaces, and Aarhus University Centre for
Digitalisation, Big Data and Data Analytics\\
Aarhus University\\
Ny Munkegade 118, Bldg 1530\\
8000 Aarhus\\
Denmark}

\email{swann@math.au.dk}

\begin{abstract}
  We consider \( \G_2 \)-manifolds with an effective torus action that
  is multi-Hamiltonian for one or more of the defining forms.
  The case of \( T^3 \)-actions is found to be distinguished.
  For such actions multi-Hamiltonian with respect to both the three-
  and four-form, we derive a Gibbons-Hawking type ansatz giving the
  geometry on an open dense set in terms a symmetric
  \( 3\times 3 \)-matrix of functions.
  This leads to particularly simple examples of explicit metrics with
  holonomy equal to~\( \G_2 \).
  We prove that the multi-moment maps exhibit the full orbit space
  topologically as a smooth four-manifold containing a trivalent graph
  as the image of the set of special orbits and describe these graphs
  in some complete examples.
\end{abstract}

\subjclass[2010]{Primary 53C25; secondary 53C29, 53D20, 57R45, 70G45}

\maketitle

\begin{center}
  \begin{minipage}{0.8\linewidth}
    \microtypesetup{protrusion=false} \small \tableofcontents
  \end{minipage}
\end{center}

\section{Introduction}
\label{sec:intro}

The Gibbons-Hawking ansatz \cite{Gibbons-Hawking1} furnishes a way of
constructing hyperK\"ahler four-manifolds with circle symmetry.
More generally, the classifications of complete hypertoric manifolds
(see, e.g., \cite{Bielawski:tri-Hamiltonian,Dancer-S:hK}) show that
moment map techniques, similar to the Delzant construction of
symplectic geometry, can be useful when exploring Ricci-flat metrics.

Metrics of holonomy \( \G_2 \) are known to be Ricci-flat.
What is perhaps less familiar is that also in this setting, one has a
notion of (multi-)symplectic geometry
\cite{Madsen-S:mmmap1,Madsen-S:mmmap2}.
It is therefore natural to ask what should be the analogue of toric or
hypertoric geometry in this context.

The first question to consider is which tori can act in a
multi-Hamiltonian way on a torsion-free \( \G_2 \)-manifold.
We find in~\S\ref{sec:eff-action} that the torus must have rank
between \( 2 \) and~\( 4 \).
A dimension count then reveals that the case that best mimics
hypertoric geometry is when a three-torus is multi-Hamiltonian for
both the defining three-form and its Hodge dual four-form: this is the
only case where the dimension of the orbit space matches the dimension
of the target space for the multi-moment map.
This `toric' case with an effective \( T^3 \)-action enjoys several
immediate properties in common with the standard toric and hypertoric
situation.
In particular, we see that all stabilisers are again connected
subtori, in this case of dimension at most~\( 2 \), and that the
multi-moment maps provide local coordinates on the manifold of
principal orbits, so an open dense set of~\( M \) becomes a
\( T^3 \)-bundle over a four-manifold.

In~\S\ref{sec:G2can-form}, we derive the analogue of the
Gibbons-Hawking ansatz for toric \( \G_2 \)-manifolds~\( M \).
The crucial local datum is now a smooth positive definite section
\( V\in\Gamma(U,S^2(\bR^3) ) \) on an open set in
\( U \subset \bR^4 \).
This determines the curvature of the \( T^3 \)-bundle and must satisfy
a pair of PDEs: one is a divergence-free condition on~\( V \) and the
other system is a quasi-linear elliptic second order PDE\@.
These differential operators are natural for the action
of~\( \GL(3,\bR) \) resulting from change of basis for the Lie algebra
\( \lie t^3 \) of~\( T^3 \), and are nearly uniquely specified by this
property.
The divergence-free equation is essentially one used in continuum
mechanics.

The above description, in terms of \( V \), applies at points that
have trivial \( T^3 \)-stabiliser.
In~\S\ref{sec:sing-orb}, we obtain a good understanding of the
differential topology near singular orbits.
As in the hypertoric case, one finds that \( M/T^3 \) is homeomorphic
to a smooth manifold.
This is unlike the situation for toric symplectic manifolds where the
orbit space is a manifold with corners~\cite{Karshon-L:toric}.
Our main result is that such a homeomorphism is realised via the
multi-moment maps.
Furthermore, the image of the singular orbits in the four-manifold
\( M/T^3 \) is a trivalent graph, whose edges are straight lines in
multi-moment map coordinates.
These results are obtained by first studying flat models, including
\( S^1 \times \bC^3 \), where the graph has a single vertex where
three edges meet, and \( T^2 \times \bR \times \bC^2 \), where the
graph has one edge and no vertex.

Our distinguished case of \( \G_2 \)-manifolds that are
multi-Hamiltonian for \( T^3 \) has the good feature that there are
non-trivial complete examples with full holonomy~\( \G_2 \).
Indeed, the Bryant-Salamon \( \G_2 \)-structure on the spin bundle of
\( S^3 \) \cite{Bryant-S:excep-hol} is such an example, as are the
generalisations
in~\cite{Brandhuber-al:G2,Bazaikin-B:G2,Bogoyavlenskaya:G2}.
We study the Bryant-Salamon example in some detail, showing how it
fits into the general framework.
In particular, the associated trivalent graph is connected with two
vertices and the multi-moment map provides a global homeomorphism
\( M/T^3 \to \bR^4 \).

If one is willing to compromise on completeness, our approach produces
particularly simple Riemannian metrics with (restricted) holonomy
equal to \( \G_2 \), see Examples~\ref{ex:mu-dep}
and~\ref{ex:poly-ex}.

\subsection*{Acknowledgements}

We thank Uwe Semmelmann for useful discussions.
TBM is grateful for financial support by Villum Fonden.
AFS was partially supported by the Danish Council for Independent
Research~\textbar~Natural Sciences project DFF - 6108-00358 and the
Danish National Research Foundation grant DNRF95 (Centre for Quantum
Geometry of Moduli Spaces - QGM).
We thank the referee for a careful reading of the paper, insightful
comments and suggesting better forms for \( z_{j}^{i} \) and \( Q \)
in~\S\ref{sec:torsion-free}.

\section{\texorpdfstring{\( \G_2 \)}{G2}-manifolds with
multi-Hamiltonian torus actions}
\label{sec:eff-action}

Let \( M \) be a connected \( 7 \)-manifold.
A \( \G_2 \)-structure on \( M \) is determined by a \( 3 \)-form
\( \varphi \) that is pointwise linearly equivalent to the form
\begin{equation*}
  \varphi_0 =
  e^{123}-e^1(e^{45}+e^{67})-e^2(e^{46}+e^{75})-e^3(e^{47}+e^{56}),
\end{equation*}
where \( E_1,\dots, E_7 \) is a basis of \( V\cong \bR^7 \),
\( e^1,\dots, e^7 \) is its dual basis of \( V^* \), wedge signs are
suppressed and \( e^{123} = e^{1}\wedge e^{2} \wedge e^{3} \), etc.
We shall sometimes refer to \( E_1,\dots E_7 \) (and its dual) as an
\emph{adapted basis}.

The \( \GL(V) \)-stabiliser of \( \varphi_0 \) is the compact
\( 14 \)-dimensional Lie group \( \G_2\subset\SO(V) \).
In fact, \( \varphi_0 \) uniquely determines both the inner product
\( g_0 = \sum_{j=1}^7(e^j)^2 \) and volume element
\( \vol_0 = e^{1234567} \) via the relation
\begin{equation*}
  6g_0(X,Y)\vol_0
  = (X\hook \varphi_0)\wedge(Y\hook\varphi_0)\wedge\varphi_0,
\end{equation*}
for all \( X,Y\in V \) (see~\cite{Bryant:excep-hol}).
Correspondingly, \( \varphi \)~determines a metric~\( g \) and a
volume form~\( \vol \) on~\( M \).
From this, it also follows that we have an additional dual
\( 4 \)-form, \( \Hodge\varphi \), pointwise equivalent to
\begin{equation*}
  \Hodge\varphi_0
  = e^{4567}-e^{23}(e^{45}+e^{67})-e^{31}(e^{46}+e^{75})-e^{12}(e^{47}+e^{56}).
\end{equation*}
We also get a cross-product operation via
\( g(X\times Y,Z) = \varphi(X,Y,Z) \).
Three-dimensional subspaces of~\( T_p M \) closed under the
cross-product are \emph{associative}, their orthogonal complements are
\emph{co-associative}.

Following standard terminology, we say that \( (M,\varphi) \) is a
\emph{\( \G_2 \)-manifold} if the \( \G_2 \)-structure is
torsion-free, hence the (restricted) holonomy group \( \Hol_0(g) \) is
contained in \( \G_2\subset\SO(7) \).
This implies \( g \) is Ricci-flat.
It is well-known~\cite{Fernandez-G:G2-manf} that being torsion-free,
in this context, is equivalent to the condition that \( \varphi \) is
closed and co-closed.

We are interested in \( \G_2 \)-manifolds that come with an effective
action of a torus~\( T^k \) on~\( M \) that preserves~\( \varphi \),
hence also \( \Hodge\varphi \) and the metric \( g \).
Such an action gives us a map
\begin{equation}
  \label{eq:inf-act}
  \xi\colon \bR^k\cong\lt^k\to\mathfrak{X}(M),
\end{equation}
which is a Lie algebra anti-homomorphism.
Subsequently, we shall often write \( \xi_p \) for the image of
\( \xi \) at \( p \in M \).
This is a subspace of \( T_p M \) of dimension at most~\( k \).

\begin{definition}[{\cite[Def.~3.5]{Madsen-S:mmmap2}}]
  Let \( N \) be a manifold equipped with a closed \( (k+1)
  \)-form~\( \alpha \), and \( G \)~an Abelian Lie group acting
  on~\( N \) preserving~\( \alpha \).
  A \emph{multi-moment map} for this action is an invariant map
  \( \nu\colon N \to \Lambda^k\lie g^* \) such that
  \begin{equation*}
    d\inp{\nu}{W}=\xi(W)\hook\alpha,
  \end{equation*}
  for all \( W\in\Lambda^k\lie g \); here
  \( \xi(W)\in\Gamma(\Lambda^k TM) \) is the unique multi-vector
  determined by~\( W \) via~\( \xi \).
\end{definition}

We say that such a torus symmetry on a \( \G_2 \)-manifold is
\emph{multi-Hamiltonian} if there are multi-moment maps associated
with \( (\varphi,T^k) \) and/or \( (\Hodge\varphi,T^k) \).
This requires that \( k\geqslant 2 \) for non-triviality.
A discussion of circle invariant \( \G_2 \)-metrics can be found
in~\cite{Apostolov-S:G2S1}, and such metrics were also at the heart of
the constructions in~\cite{Foscolo-al:G2S1}.

Given an effective torus action by \( T^k \) on \( (M,\varphi) \), it
is obvious that \( k\leqslant7 \) as we have the following well-known
observation:

\begin{lemma}
  \label{lem:symm-degree}
  Let \( N \) be an \( n \)-manifold with an effective action of a
  torus~\( T^k \).
  Then \( k\leqslant n \) and the principal stabiliser is trivial.
\end{lemma}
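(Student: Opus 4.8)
The plan is to exploit the principal orbit theorem together with the slice theorem, using the abelian nature of the group in an essential way. Since \( T^k \) is compact and \( N \) is connected, the principal orbit theorem guarantees that the points lying on principal orbits form an open dense subset; fix such a point \( p \) and write \( H = \Stab(p) \) for the principal stabiliser.

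First I would record the characterising feature of a principal orbit supplied by the slice theorem: a \( T^k \)-invariant neighbourhood of the orbit through \( p \) is equivariantly diffeomorphic to the associated bundle \( T^k \times_H S \), where \( S \) is a slice and the slice representation of \( H \) on \( S \) is trivial. In particular \( H \) fixes \( S \) pointwise.

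The key step is to upgrade this local statement to the assertion that \( H \) acts trivially on all of \( N \). Since \( T^k \) is abelian, \( H \) is normal, so for every \( g\in T^k \) we have \( H = gHg^{-1} \) and hence \( H \) fixes the translate \( gS \) pointwise as well. Therefore \( H \) fixes the saturation \( T^k\cdot S \), an open neighbourhood of the orbit, pointwise. Now the fixed-point set \( N^H \) is a closed submanifold of \( N \); as it contains a nonempty open set it is both open and closed, so connectedness of \( N \) forces \( N^H = N \). Effectiveness of the action then yields \( H = \{e\} \), which is the second assertion.

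Finally, with the principal stabiliser trivial, the principal orbit \( T^k\cdot p \cong T^k/H = T^k \) is a \( k \)-dimensional embedded submanifold of \( N \), whence \( k\leqslant n \). I expect the main obstacle to be precisely this middle step: triviality of the slice representation only delivers local fixing, and it is the abelian hypothesis, through normality of \( H \), together with the connectedness of \( N \), that promotes this to a global triviality statement and thereby excludes any positive-dimensional principal stabiliser.
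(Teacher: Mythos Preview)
Your proof is correct, but it takes a different route from the paper's argument. The paper observes that since \(T^k\) is abelian, conjugation is trivial, so every isotropy subgroup is its own conjugacy class; the principal stabiliser \(H\), being minimal in the partial order on isotropy types, is therefore contained in every stabiliser \(H_p\), whence \(H\subset\bigcap_{p\in N}H_p\). This intersection is the ineffective kernel of the action, which is trivial by effectiveness. Your approach instead goes through the slice theorem and the structure of the fixed-point set \(N^H\) as a closed submanifold, promoting local triviality to global via connectedness. Both hinge on normality of \(H\) in the abelian group, but the paper's argument is shorter and needs only the order-theoretic characterisation of the principal type, whereas yours is more geometric and makes the local model explicit. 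One small point worth tightening in your write-up: the inference ``\(N^H\) contains a nonempty open set, hence is open'' deserves a word of justification---the relevant fact is that the connected component of \(N^H\) meeting the open set has full dimension, hence is open in \(N\), and is also closed as a component of a closed submanifold.
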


\begin{proof}
  It suffices to prove the final statement.
  As \( T^k \) is Abelian, conjugation is trivial.
  Therefore different isotropy subgroups~\( H_p \) belong to different
  isotropy types.
  It follows that the principal stabiliser can be obtained as the
  intersection of all stabilisers, \( \bigcap_{p\in N} H_p \), and so
  is the trivial group by effectiveness of the action.
\end{proof}

If \( N \) is a compact Ricci-flat manifold, then each Killing vector
field is parallel~\cite{Bochner:Killing}.
It follows by~\cite[Cor.~6.67]{Besse:Einstein} that \( (N,h) \) has a
finite cover in the form of a Riemannian product
\( T^\ell\times N_1^{n-\ell} \), some
\( k\leqslant \ell\leqslant n \), of a flat torus and compact
simply-connected Ricci-flat manifold~\( N_1 \).
In particular, for a compact \( \G_2 \)-manifold with an effective
\( T^k \)-action, \( \Hol_0(g) \)~is a proper subgroup of~\( \G_2 \).
From Berger's classification~\cite{Beger:hol-clsf}, it follows that
the restricted holonomy is trivial, \( \SU(2) \) or~\( \SU(3) \).
Correspondingly, we must have \( \ell=7 \), \( \ell=3 \) or
\( \ell=1 \), respectively.

As our main interest is the case of full holonomy, we will often
concentrate on the case when \( M \) is non-compact.

Focusing on multi-Hamiltonian actions, we have already established
that our torus must have rank between \( 2 \) and \( 7 \).
It turns out there are further restrictions.

\begin{proposition}
  \label{prop:rank-multh-tor}
  If \( T^k \) acts effectively on a \( \G_2 \)-manifold and is
  multi-Hamiltonian, then \( 2\leqslant k\leqslant4 \).
\end{proposition}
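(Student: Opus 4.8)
The plan is to turn the statement into a pointwise question about subspaces of \( (\bR^7,\varphi_0) \) and then read off the bound from the cross-product. The lower bound \( k\geqslant 2 \) is just the non-triviality already observed (one needs \( \Lambda^2\lt^k\neq 0 \) for \( \varphi \), and \( \Lambda^3\lt^k\neq 0 \) for \( \Hodge\varphi \)), so the real content is \( k\leqslant 4 \). First I would extract the constraint hidden in the word \emph{invariant} in the definition of a multi-moment map. If \( \nu \) is a multi-moment map for an invariant closed form \( \alpha \) (\( =\varphi \) or \( \Hodge\varphi \)), then each component \( \inp{\nu}{W} \) is an invariant function, so
\begin{equation*}
  0=\xi(Z)\inp{\nu}{W}=\bigl(\xi(W)\hook\alpha\bigr)(\xi(Z))
\end{equation*}
for all \( Z\in\lt^k \) and all \( W \), where \( \xi(W) \) is the corresponding multivector. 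The right-hand side is \( \alpha \) evaluated on orbit vectors alone, so \( \alpha|_{\xi_p}=0 \) at every \( p \). By Lemma~\ref{lem:symm-degree} the principal stabiliser is trivial, so at a principal point \( \dim\xi_p=k \) and a \( \G_2 \)-frame identifies \( (T_pM,\varphi_p) \) with \( (\bR^7,\varphi_0) \); it therefore suffices to bound the dimension of a subspace \( U\subseteq\bR^7 \) on which \( \varphi_0 \), respectively \( \Hodge\varphi_0 \), restricts to zero.

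For \( \varphi_0 \) this is a one-line rank count. Since \( \varphi_0(X,Y,Z)=g_0(X\times Y,Z) \), the hypothesis \( \varphi_0|_U=0 \) says \( X\times Y\perp U \) for all \( X,Y\in U \). Fixing \( X\neq 0 \), the linear map \( Y\mapsto X\times Y \) sends \( U \) into \( U^\perp \) and has kernel \( \bR X \), because \( X\times Y=0 \) forces \( Y \) parallel to \( X \). Comparing dimensions gives \( \dim U-1\leqslant 7-\dim U \), hence \( \dim U\leqslant 4 \).

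For \( \Hodge\varphi_0 \) I would use the octonion model \( \bR^7\cong\im\mathbb{O} \) with \( X\times Y=\im(XY) \), in which, up to a positive constant, \( \Hodge\varphi_0(X,Y,Z,W)=g_0([X,Y,Z],W) \) with \( [X,Y,Z]=(XY)Z-X(YZ) \) the alternating, purely imaginary associator. Now \( \Hodge\varphi_0|_U=0 \) means \( [X,Y,Z]\perp U \) for all \( X,Y,Z\in U \). If \( U \) were closed under \( \times \), then, the associator being a combination of cross products of elements of \( U \), every \( [X,Y,Z] \) would lie in \( U\cap U^\perp=0 \), making \( U \) associative and so of dimension at most \( 3 \). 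Otherwise I may pick \( X,Y\in U \) with \( X\times Y\notin U \); then \( Z\mapsto[X,Y,Z] \) maps \( U \) into \( U^\perp \), and since its kernel on all of \( \bR^7 \) is the associative \( 3 \)-plane \( \Span{X,Y,X\times Y} \), it meets \( U \) in exactly \( \Span{X,Y} \). The rank count now reads \( \dim U-2\leqslant 7-\dim U \), again giving \( \dim U\leqslant 4 \). Either way \( k=\dim\xi_p\leqslant 4 \), which together with \( k\geqslant 2 \) proves the proposition.

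The routine half is the \( \varphi_0 \) case; I expect the main obstacle to be the \( \Hodge\varphi_0 \) case, specifically justifying the associator identity and handling the degenerate subcase where \( U \) is closed under the cross product, for which one must verify that closure forces every associator back into \( U \) and hence that \( U \) is associative.
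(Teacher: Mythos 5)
Your proof is correct, and it reaches the bound by a genuinely different route from the paper's. Both arguments share the same first step: invariance of the multi-moment map forces the orbit directions to be isotropic for the relevant form (this is the paper's Lemma~\ref{lem:mult-hamil}, which you re-derive), reducing everything to bounding the dimension of a subspace \( U \subseteq (\bR^7,\varphi_0) \) on which \( \varphi_0 \), respectively \( \Hodge\varphi_0 \), vanishes. Where you diverge is the linear algebra: the paper proves instead that \emph{every} \( 5 \)-dimensional subspace contains both an associative and a co-associative subspace (Lemma~\ref{lem:ass-coass}), by taking an orthonormal basis \( E_1,E_2 \) of \( W^\perp \), noting \( E_1\times E_2\in W \), and extending to an adapted basis; since \( \varphi_0 \) and \( \Hodge\varphi_0 \) restrict to volume forms on associative and co-associative subspaces respectively, neither can vanish on a \( 5 \)-plane. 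That single lemma disposes of both forms at once and needs only the transitivity of \( \G_2 \) on adapted frames. Your rank--nullity counts (\( Y\mapsto X\times Y \) for \( \varphi_0 \); \( Z\mapsto [X,Y,Z] \) for \( \Hodge\varphi_0 \)) are sound --- the kernel computations are right, the case split on whether \( U \) is closed under \( \times \) is handled correctly, and the identification of \( \Hodge\varphi_0 \) with the associator up to a positive constant is the standard Harvey--Lawson identity --- and they have the virtue of avoiding adapted-basis constructions entirely; the price is the octonionic input for the four-form and the extra degenerate subcase. Both arguments give the sharp bound \( 4 \).
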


The proof of Proposition~\ref{prop:rank-multh-tor} is an immediate
consequence of Lemmas~\ref{lem:ass-coass} and~\ref{lem:mult-hamil}
below.

\begin{lemma}
  \label{lem:ass-coass}
  Suppose \( W \) is a \( 5 \)-dimensional subspace of
  \( (V,\varphi_0) \).
  Then \( W \) contains both associative and co-associative subspaces.
\end{lemma}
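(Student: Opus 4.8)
The plan is to establish the two assertions separately, using the standard characterisations of the two types of planes: a \( 3 \)-plane \( P \) is associative precisely when it lies in the \( \G_2 \)-orbit of the model plane \( \Span{E_1,E_2,E_3} \), equivalently when it is spanned by \( X \), \( Y \) and \( X\times Y \) for some linearly independent \( X,Y \); and, dually, a \( 4 \)-plane \( Q \) is co-associative precisely when \( \varphi_0|_Q=0 \) (Harvey--Lawson). That the model co-associative plane \( \Span{E_4,E_5,E_6,E_7} \) satisfies this is immediate, since every monomial of \( \varphi_0 \) contains one of \( e^1,e^2,e^3 \).

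For the associative plane, I would study the cross-product map \( c\colon\Lambda^2 W\to V \), \( X\wedge Y\mapsto X\times Y \), followed by the projection \( \pi\colon V\to V/W\cong\bR^2 \). The kernel of \( \pi\circ c \) is a linear subspace of the \( 10 \)-dimensional space \( \Lambda^2 W \) of dimension at least \( 8 \). The decomposable bivectors form the affine cone over the Grassmannian \( \operatorname{Gr}(2,W) \), of projective dimension \( 6 \) inside \( \mathbb{P}(\Lambda^2 W)=\mathbb{P}^9 \), while \( \mathbb{P}(\ker(\pi\circ c)) \) has dimension at least \( 7 \). As \( 6+7>9 \), the projective dimension theorem forces these two subvarieties to meet, producing a nonzero decomposable \( X\wedge Y \) with \( X\times Y\in W \). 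Since \( X,Y \) are then linearly independent, \( X\times Y \) is a nonzero vector orthogonal to both, so \( P=\Span{X,Y,X\times Y} \) is a genuine \( 3 \)-plane contained in \( W \). Orthonormalising \( X,Y \) and using transitivity of \( \G_2 \) on orthonormal \( 2 \)-frames together with equivariance of the cross-product identifies \( P \) with \( \Span{E_1,E_2,E_3} \), so \( P \) is associative.

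For the co-associative plane, I would restrict \( \varphi_0 \) to \( W \) and invoke the classification of \( 3 \)-forms on a \( 5 \)-dimensional space: up to \( \GL(5,\bR) \) there are only two nonzero orbits, represented by \( f^{123} \) and \( f^{123}+f^{145} \), and both are divisible by a \( 1 \)-form, namely \( f^1 \). Hence \( \varphi_0|_W=\theta\wedge\gamma \) for some nonzero \( \theta\in W^* \), and the hyperplane \( Q=\ker\theta \) is a \( 4 \)-plane in \( W \) on which \( \varphi_0 \) vanishes. By the Harvey--Lawson characterisation recalled above, \( Q \) is co-associative.

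The main obstacle is the nonemptiness in the associative step: the inequality \( 6+7>9 \) becomes a proof only through the projective dimension theorem, which guarantees that the Grassmann cone actually meets the linear kernel rather than merely having the expected dimension of intersection. The remaining points---that the bivector found is genuinely decomposable and spans a \( 3 \)-dimensional rather than lower subspace, and that \( \varphi_0|_W \) is divisible by a \( 1 \)-form---are then routine, the latter resting only on the short list of orbits of \( 3 \)-forms in dimension five.
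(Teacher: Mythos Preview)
Your co-associative argument via the $\GL(5,\bR)$-classification of $3$-forms is correct and is a genuinely different route from the paper's. The paper instead normalises $W^\perp$: choose an orthonormal basis $E_1,E_2$ of the $2$-plane $W^\perp$, put $E_3=E_1\times E_2\in W$, extend to an adapted basis, and observe that $\Span{E_1,E_2,E_3}^\perp\subset W$ is co-associative while $\Span{E_3,E_4,E_7}\subset W$ is associative. Both claims fall out in one stroke from this explicit model, with no classification or intersection theory needed.

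Your associative step, however, has a real gap. The projective dimension theorem you invoke---that subvarieties of $\mathbb{P}^n$ whose dimensions sum to at least $n$ must meet---is a theorem over algebraically closed fields, not over~$\bR$; a smooth real conic in $\mathbb{P}^2(\bR)$ and a line missing it already show it can fail. So the inequality $6+7>9$ alone does not force the real Grassmannian to meet $\mathbb{P}(\ker(\pi\circ c))$. The conclusion is nonetheless true, and there is an elementary replacement for the projective dimension theorem: for any nonzero $X\in W$ the linear map $Y\mapsto \pi(X\times Y)$ sends the $5$-dimensional $W$ to the $2$-dimensional $V/W$, so its kernel has dimension at least~$3$; since $X$ itself lies in this kernel you can choose $Y$ there independent of~$X$, producing the required decomposable $X\wedge Y\in\ker(\pi\circ c)$. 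One can also rescue the intersection-theoretic viewpoint by noting that the Pl\"ucker degree of $\operatorname{Gr}(2,5)$ is~$5$, hence odd, and invoking mod-$2$ intersection numbers in $\mathbb{P}^9(\bR)$---but that is far heavier than the problem warrants, and heavier than the two-line linear-algebra fix above.
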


\begin{proof}
  Choose an orthonormal basis \( E_1,E_2 \) for \( W^\perp \).
  Then \( E_3 = E_1\times E_2 \) lies in~\( W \).
  Thus \( W \) contains the co-associative subspace
  \( \Span{E_1,E_2,E_3}^\perp \).
  Furthermore, \( E_1,E_2,E_3 \) can be extended to a \( \G_2 \)
  adapted basis for \( V \).
  For this basis \( E_4\times E_7 = E_3 \), so
  \( \Span{E_3,E_4,E_7} \) is an associative subspace of~\( W \).
\end{proof}

The following observation states that a necessary condition for an
action to be multi-Hamiltonian is that the orbits are
\enquote{isotropic}.

\begin{lemma}
  \label{lem:mult-hamil}
  If a torus action of~\( T^k \) on~\( N \) is multi-Hamiltonian for a
  closed differential form~\( \alpha \) of degree~\( r\leqslant k \),
  then \( \alpha|_{\Lambda^r\xi}\equiv0 \).

  If \( b_1(N)=0 \), this condition is also sufficient for the
  \( T^k \)-action to be multi-Hamiltonian.
\end{lemma}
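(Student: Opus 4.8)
The plan is to prove both implications from the defining relation \( d\inp{\nu}{W}=\xi(W)\hook\alpha \) together with the invariance of~\( \nu \). Here a multi-moment map for the closed invariant \( r \)-form~\( \alpha \) is a map \( \nu\colon N\to\Lambda^{r-1}(\lt^k)^* \), since the general definition associates to a form of degree~\( r \) the target \( \Lambda^{r-1}(\lt^k)^* \). Write \( X_u=\xi(u) \) for the fundamental vector field of \( u\in\lt^k \); since \( T^k \) is abelian these commute, \( [X_u,X_v]=0 \), and each preserves the closed form, so \( L_{X_u}\alpha=0 \). Both directions reduce to the observation that for decomposable \( W=u_1\wedge\dots\wedge u_{r-1} \) the contraction \( \xi(W)\hook\alpha \) is a \( 1 \)-form whose value on \( X_{u_r} \) is, up to sign, \( \alpha(X_{u_1},\dots,X_{u_r}) \).

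For necessity, suppose \( \nu \) exists. Given \( u_1,\dots,u_r\in\lt^k \), set \( W=u_1\wedge\dots\wedge u_{r-1} \). Then, up to sign,
\[
  \alpha(X_{u_1},\dots,X_{u_r})=(\xi(W)\hook\alpha)(X_{u_r})=d\inp{\nu}{W}(X_{u_r})=X_{u_r}\inp{\nu}{W}=L_{X_{u_r}}\inp{\nu}{W}.
\]
Because \( \nu \) is invariant and \( W \) is constant, \( \inp{\nu}{W} \) is an invariant function, so the right-hand side vanishes. As decomposable elements span \( \Lambda^r\lt^k \) and both sides are alternating and multilinear, this yields \( \alpha|_{\Lambda^r\xi}\equiv0 \).

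For sufficiency, assume \( b_1(N)=0 \) and \( \alpha|_{\Lambda^r\xi}\equiv0 \). For \( W\in\Lambda^{r-1}\lt^k \) put \( \beta_W=\xi(W)\hook\alpha \), a \( 1 \)-form depending linearly on~\( W \). The first key step is to check that \( \beta_W \) is closed: on a decomposable \( W \) we have \( \beta_W=\pm\,\iota_{X_{u_1}}\cdots\iota_{X_{u_{r-1}}}\alpha \), and applying Cartan's formula \( d\iota_X=L_X-\iota_X d \) repeatedly, every Lie-derivative term drops out because \( L_{X}\iota_{Y}-\iota_{Y}L_{X}=\iota_{[X,Y]}=0 \) and \( L_{X_u}\alpha=0 \), leaving \( d\beta_W=\pm\,\xi(W)\hook d\alpha=0 \) since \( \alpha \) is closed. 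As \( b_1(N)=0 \), every closed \( 1 \)-form is exact, so \( \beta_W=df_W \) for some function~\( f_W \).

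The second key step is to make these potentials invariant and assemble them into~\( \nu \). For any \( u\in\lt^k \),
\[
  X_u f_W=df_W(X_u)=\beta_W(X_u)=\pm\,\alpha(X_{u_1},\dots,X_{u_{r-1}},X_u),
\]
which vanishes exactly by \( \alpha|_{\Lambda^r\xi}\equiv0 \); hence every primitive \( f_W \) is automatically invariant. To obtain a well-defined linear \( \nu \), I fix a basis of \( \Lambda^{r-1}\lt^k \), pick one primitive for each basis element, and extend linearly; setting \( \inp{\nu}{W}=f_W \) gives an invariant map \( \nu\colon N\to\Lambda^{r-1}(\lt^k)^* \) with \( d\inp{\nu}{W}=\xi(W)\hook\alpha \), i.e.\ a multi-moment map. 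I expect the only real obstacle to be bookkeeping in the closedness computation for~\( \beta_W \)---fixing the sign and the order of interior products in the iterated Cartan identity for multivector contractions; conceptually the argument is clean, since the isotropy condition is precisely what forces the (automatically closed) potentials to be invariant, while \( b_1(N)=0 \) supplies their existence.
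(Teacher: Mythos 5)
Your proposal is correct and follows essentially the same route as the paper: necessity by evaluating \( d\inp{\nu}{W} \) on a third fundamental vector field and using invariance of \( \nu \), and sufficiency by noting that \( \xi(W)\hook\alpha \) is closed (hence exact when \( b_1(N)=0 \)) and that the isotropy condition forces the primitives to be invariant since \( T^k \) is connected. The only difference is that you spell out the closedness of \( \xi(W)\hook\alpha \) via the iterated Cartan formula, which the paper simply asserts.
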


\begin{proof}
  Consider the fundamental vector fields
  \( \xi(V_1),\dots,\xi(V_{r-1}) \) associated with vectors
  \( V_1,\dots V_{r-1}\in\lie{t}^k\cong\bR^k \), and let \( \nu' \) be
  a component of the multi-moment map
  \( \nu\colon N \to \Lambda^{r-1}\lt^k \) that satisfies
  \begin{equation*}
    d\nu'=\alpha(\xi(V_1),\dots,\xi(V_{r-1}),\any).
  \end{equation*}

  By invariance of the multi-moment map, we have for any
  \( V_r\in \lt^k \) that
  \begin{equation*}
    0
    = \mathcal L_{\xi(V_r)}\nu'
    = \xi(V_r)\hook d\nu'
    = \alpha(\xi(V_1),\dots,\xi(V_r)).
  \end{equation*}
  It follows that \( \alpha \) vanishes on \( \Lambda^r\xi \), as
  required.

  As \( T^k \) preserves \( \alpha \), the \( 1 \)-form
  \( \alpha(\xi(V_1),\dots,\xi(V_{r-1}),\any) \) is closed and
  therefore exact, say equal to \( d\nu' \), when \( b_1(N) = 0 \).
  The condition \( \alpha|_{\Lambda^r\xi}\equiv0 \) implies invariance
  of~\( \nu' \), since \( T^k \) is connected.
\end{proof}

The upshot of Proposition~\ref{prop:rank-multh-tor} is that there are
potentially \( 7 \) possible cases that can occur:
\( T^2 \)~multi-Hamiltonian for \( \varphi \),
\( T^3 \)~multi-Hamiltonian for either \( \varphi \) or
\( \Hodge\varphi \), \( T^3 \)~multi-Hamiltonian for both
\( \varphi \) and \( \Hodge\varphi \), \( T^4 \)~acts
multi-Hamiltonian for \( \varphi \) or \( \Hodge\varphi \), and
\( T^4 \) acts multi-Hamiltonian for both \( \varphi \) and
\( \Hodge\varphi \).
In reality, the last situation cannot occur as we shall explain below.

Let \( M_0\subset M \) denote subset of points \( p \) such that the
map~\( \xi \) of~\eqref{eq:inf-act} is injective.
It follows by Lemma~\ref{lem:symm-degree} that \( M_0 \) is open and
dense, since it contains the set of principal orbits~\( M'_0 \).
Note that \( M'_0 \) is the total space of a principle
\( T^k \)-bundle.

\subsection{Two-torus actions}
\label{sec:T2sym}

This case was studied in~\cite{Madsen-S:mmmap1}, so we shall only give
a brief summary.

Given a multi-Hamiltonian action for~\( \varphi \), the multi-moment
map \( \nu \) is an invariant scalar function
\( M \to \Lambda^2(\lt^2)^*\cong\bR \).
For \( t \in \nu(M) \), if the action of \( T^2 \)~is free on the
level set \( \nu^{-1}(t) \), then the reduction
\( N = \nu^{-1}(t)/T^2 \) is a \( 4 \)-manifold carrying three
symplectic forms of the same orientation, induced by
\begin{equation*}
  U_1 \hook \varphi,\quad
  U_2 \hook \varphi \eqand
  U_1 \wedge U_2\hook\Hodge\varphi,
\end{equation*}
where \( U_i \) generate the \( T^2 \)-action.
In interesting cases this triple is not hyperK\"ahler, but does fit in
to the framework of~\cite{Fine-Y:hypersymplectic}.

Conversely, the \( \G_2 \)-manifold \( (M,\varphi) \) can be recovered
from the \( 4 \)-manifold \( N \) by building a two-torus bundle over
it.
One then equips the total space of this bundle with a suitable
\( \SU(3) \)-structure and reconstructs the original
\( \G_2 \)-holonomy manifold via an adapted \enquote{Hitchin flow}.

Known complete \( \G_2 \)-manifolds with a multi-Hamiltonian
\( T^2 \)-action include the Bryant-Salamon metrics on the space of
anti-self-dual \( 2 \)-forms over a complete self-dual positive
Einstein manifold~\cite{Bryant-S:excep-hol}.

\subsection{Three-torus actions}
\label{sec:eff-T3}

The main interest here will be for actions that are multi-Hamiltonian
for both \( \varphi \) and \( \Hodge\varphi \), so that we have
multi-moment maps \( (\nu,\mu)\colon M \to \bR^3\times\bR \).
This is the only case in which the dimension of \( M/T^k \) matches
that of the target space for the multi-moment maps.
Being multi-Hamiltonian for \( \varphi \), it follows by
Lemma~\ref{lem:mult-hamil} that \( \varphi|_{\Lambda^3\xi}\equiv0 \).
This condition was studied in~\cite[\S IV]{Harvey-Lawson:calibrated},
where it where it is shown that \( \G_2 \) acts transitively on the
set of such three-planes.
Indeed, for \( p\in M_0 \), for any orthonormal
\( X_2,X_3 \in \xi_p \), there is an adapted basis where these
correspond to \( E_6 \) and~\( E_7 \).
The \( \G_2 \)-stabiliser of \( \Set{E_6,E_7} \) is an
\( \SU(2) \)~acting on \( \Span{E_2,E_3,E_4,E_5} \cong \bC^2 \).
Using this action, we see that we can extend to a basis
\( X_1,X_2,X_3 \) of~\( \xi_p \) and have \( X_1 \) identified
with~\( E_5 \).
Now \( \hat\theta_i = e^{i+4} \), \( i=1,2,3 \), are dual to
\( X_1,X_2,X_3 \): \( \hat\theta_i(X_j)=\delta_{ij} \) and
\( \hat\theta_i(X)=0 \) for \( X\perp\Span{X_1,X_2,X_3} \).  Putting
\begin{equation*}
  \begin{gathered}
    \alpha_i=X_j\wedge X_k\hook\varphi=-e^i,\quad\beta=X_1\wedge X_2
    \wedge X_3\hook\Hodge\varphi=-e^4,
  \end{gathered}
\end{equation*}
where \( (ijk)=(123) \), corresponding to the differentials of the
multi-moment maps at~\( p \), the \( \G_2 \)-structure at
\( p\in M_0 \) takes the form:
\begin{equation}
  \label{eq:G2formsa}
  \begin{gathered}
    \varphi = - \alpha_{123} -
    \alpha_1(\beta\hat{\theta}_1-\hat{\theta}_{23}) -
    \alpha_2(\beta\hat\theta_2-\hat\theta_{31})
    - \alpha_3(\beta\hat\theta_3-\hat\theta_{12}),\\
    \Hodge\varphi = \hat\theta_{123}\beta +
    \alpha_{23}(\beta\hat{\theta}_1-\hat{\theta}_{23}) +
    \alpha_{31}(\beta\hat\theta_2-\hat\theta_{31}) +
    \alpha_{12}(\beta\hat\theta_3-\hat\theta_{12}).
  \end{gathered}
\end{equation}
We shall return to this expression later on,
in~\S\ref{sec:G2can-form}, refining it to give a \( \G_2 \)-analogue of
the Gibbons-Hawking ansatz.

As in the hypertoric case, there are no points with discrete
stabiliser.
In particular, \( M_0 \) is the total space of a principal
\( T^3 \)-bundle over the corresponding orbit space.

\begin{lemma}
  \label{lem:conn-isotr}
  Suppose \( T^3 \) acts effectively on a manifold \( M \) with
  \( \G_2 \)-structure \( \varphi \) so that the orbits are isotropic,
  \( \varphi|_{\Lambda^3\xi_p} = 0 \).
  Then each isotropy group is connected and of dimension at most two;
  hence trivial, a circle or~\( T^2 \).
\end{lemma}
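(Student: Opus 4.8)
The plan is to analyse the isotropy (slice) representation of a stabiliser \( H_p = \Stab(p) \) on \( T_pM \) and to locate its image inside \( \G_2 \). The first step is to observe that this representation is faithful: since \( T^3 \) preserves \( \varphi \) it preserves the induced metric \( g \), so each \( h\in H_p \) acts as an isometry of the connected manifold \( M \) fixing \( p \); an isometry with trivial differential at a fixed point is the identity, whence \( h\mapsto dh_p \) embeds \( H_p \) into \( \SO(T_pM) \). As \( dh_p \) also fixes \( \varphi_p \), which is linearly equivalent to \( \varphi_0 \), this realises \( H_p \) as a closed abelian subgroup of \( \G_2 \). Because any such subgroup has identity component a torus in \( \G_2 \), and \( \G_2 \) has rank \( 2 \), we immediately get \( \dim H_p = \dim H_p^0 \leqslant 2 \).

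The second step records that \( H_p \) fixes the orbit tangent space \( \xi_p \) pointwise. Indeed, since \( T^3 \) is abelian the fundamental vector fields \( \xi(W) \) are \( T^3 \)-invariant, so for \( h\in H_p \) one has \( dh_p(\xi(W)_p) = (h_*\xi(W))_p = \xi(W)_p \). Thus the faithful embedding above factors through the pointwise \( \G_2 \)-stabiliser of the subspace \( \xi_p \subseteq T_pM \), whose dimension is \( 3-\dim H_p \).

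For connectedness I would then run the three cases \( \dim H_p \in \{0,1,2\} \), in each case normalising \( \xi_p \) by the chain \( \G_2 \supset \SU(3) \supset \SU(2) \supset \{1\} \) of pointwise stabilisers of successive vectors of an adapted frame, using that \( \G_2 \) acts transitively on \( S^6 \), that \( \SU(3) \) acts transitively on \( S^5 \), and that \( \SU(2) \) acts freely transitively on \( S^3 \). When \( \dim H_p = 2 \), \( \xi_p \) is a line which may be taken to be \( \Span{E_7} \); then \( H_p \subseteq \SU(3) \), and since \( H_p^0 \) is a two-torus it is a maximal torus of \( \SU(3) \), so the abelian group \( H_p \) lies in its own centraliser \( C_{\SU(3)}(H_p^0)=H_p^0 \), forcing \( H_p = H_p^0 = T^2 \). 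When \( \dim H_p = 1 \), \( \xi_p \) is a plane, normalised to \( \Span{E_6,E_7} \), so \( H_p \subseteq \SU(2) \) with \( H_p^0 \) a maximal circle; the same self-centralising argument gives \( H_p = H_p^0 = S^1 \). When \( \dim H_p = 0 \), the hypothesis \( \varphi|_{\Lambda^3\xi_p}=0 \) makes \( \xi_p \) an isotropic \( 3 \)-plane, normalised as in the discussion around~\eqref{eq:G2formsa} to \( \Span{E_5,E_6,E_7} \); running the full chain shows its pointwise stabiliser is trivial, so \( H_p \) is trivial.

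The main obstacle is the final normal-form computation: one must check that the pointwise \( \G_2 \)-stabiliser of the isotropic triple \( \{E_5,E_6,E_7\} \) is trivial, which amounts to verifying that \( E_5 \) lies in the complex \( \bC^2 \) on which the residual \( \SU(2) = \Stab(E_6,E_7) \) acts freely. This is exactly where the isotropy condition enters, since \( \varphi_0(E_5,E_6,E_7)=0 \) guarantees \( E_5 \) is \( J_{E_7} \)-orthogonal to \( \Span{E_6} \). The remaining inputs — faithfulness of the slice representation and the self-centralising property of maximal tori — are standard, so the only real content is this \( \G_2 \)-linear-algebra step together with the rank bound.
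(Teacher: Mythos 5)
Your proof is correct and follows essentially the same route as the paper's: a faithful isotropy representation of \( H_p \) landing in \( \G_2 \), the rank bound \( \dim H_p \leqslant 2 \), and connectedness via the self-centralising property of maximal tori in \( \SU(3) \) and \( \SU(2) \). The only cosmetic differences are that the paper establishes faithfulness via the tubular-neighbourhood/slice model rather than the trivial-differential criterion for isometries, and settles the zero-dimensional case by observing that iterated cross products of the generators span \( T_pM \) rather than by your chain of pointwise stabilisers \( \G_2 \supset \SU(3) \supset \SU(2) \supset \{1\} \) — but these are interchangeable standard arguments.
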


\begin{proof}
  Let \( p \in M \) have isotropy group \( H \leqslant T^3 \).
  Then \( H \) is an Abelian group acting on \( V = T^\bot \), where
  \( T = T_p(T^3\cdot p) \) is the tangent space to the orbit.
  As \( T^3\cdot p \) has an neighbourhood that can be identified with
  the normal bundle~\( T^3\times_H V \) and this neighbourhood
  necessarily intersects principal orbits, the action on~\( V \) is
  faithful.
  Adding the trivial \( H \)-module \( T \) to \( V \), we have that
  the \( H \)-action on \( T_p M = T\oplus V \) preserves the
  \( \G_2 \)-structure.
  As \( \G_2 \) has rank~\( 2 \), we get \( \dim H \leqslant 2 \).

  If \( \dim H = 0 \), then at \( p \), then any generators
  \( U_1,U_2,U_3 \) of the \( T^3 \) have the property that their
  cross products span~\( T_p M \).
  As the \( T^3 \)-action preserves the \( \G_2 \)-structure, this
  implies that \( H \) fixes every element of \( T_p M \).
  Thus \( H \) is trivial.

  For \( \dim H = 1 \), the space \( T \) is spanned by two linearly
  independent vectors \( U_1 \) and~\( U_2 \).
  It follows that \( H \) preserves the non-zero vector
  \( U_1 \times U_2 \) in~\( V \) and must act as a subgroup
  of~\( \SU(2) \) on the orthogonal complement.
  Thus \( H \) is a one-dimensional Abelian subgroup of~\( \SU(2) \).
  This forces the identity component \( H_0 \) to be a maximal torus
  of \( \SU(2) \), so conjugate to
  \( T^1 = \Set{\diag(\exp(i\theta),\exp(-i\theta)) \with \theta \in
  \bR} \).
  But any matrix in \( \SU(2) \) commuting with \( T^1 \) is diagonal,
  so belongs to \( T^1 \).  Thus \( H \cong T^1 \), which is connected.

  When \( \dim H = 2 \), then \( H \) is a subgroup of~\( \SU(3) \),
  so its identity component is a maximal torus.
  Again this conjugate to a group of diagonal matrices
  \( \diag(\exp(i\theta),\allowbreak \exp(i\varphi),\allowbreak
  \exp{-i(\theta+\varphi)}) \) and any other matrix commuting with
  this group is of this form.
  Thus \( H \cong T^2 \) and is connected.
\end{proof}

The classical example of a complete \( \G_2 \)-holonomy manifold with
a multi-Hamiltonian \( T^3 \)-action is the spin bundle of~\( S^3 \)
equipped with its Bryant-Salamon structure~\cite{Bryant-S:excep-hol},
see~\S\ref{sec:Cone-and-BS-S3S3}.
Additional complete examples can be found
in~\cite{Brandhuber-al:G2,Bazaikin-B:G2,Bogoyavlenskaya:G2}.

\subsection{Four-torus actions}
\label{sec:T4sym}

If a torus \( T^4 \) is multi-Hamiltonian for \( \varphi \), then the
multi-moment map has \( 6 \) components as its image is in
\( \Lambda^2(\lt^4)^*\cong\bR^6 \).

\begin{lemma}
  Suppose \( (M,\varphi) \) admits an effective \( T^4 \)-action that
  is multi\hyphen Hamiltonian for \( \varphi \).
  If \( p\in M_0 \), then \( \xi_p\subset T_p M \) is co-associative.
\end{lemma}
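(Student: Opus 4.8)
\section*{Proof proposal}

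The plan is to reduce the claim to a pointwise linear-algebra statement about the model form \( \varphi_0 \). Since \( p\in M_0 \), the infinitesimal action \( \xi \) is injective at \( p \), so \( \xi_p \) is a four-dimensional subspace of \( T_pM \). As the action is multi-Hamiltonian for the closed three-form \( \varphi \) and \( 3\leqslant 4 \), Lemma~\ref{lem:mult-hamil} yields \( \varphi|_{\Lambda^3\xi}\equiv0 \); evaluating at \( p \) gives \( \varphi(X,Y,Z)=0 \) for all \( X,Y,Z\in\xi_p \). Because \( \varphi \) is a three-form and \( \dim\xi_p=4 \), this is precisely the condition \( \varphi|_{\xi_p}=0 \). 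It therefore suffices to show that a four-plane \( W\subset(V,\varphi_0) \) on which \( \varphi_0 \) vanishes is co-associative.

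To establish this, I would first pick any three-dimensional subspace \( S\subset W \). Since \( \varphi_0|_W=0 \), this \( S \) is isotropic, and by the transitivity of \( \G_2 \) on \( \varphi_0 \)-null three-planes already used in the \( T^3 \) discussion (see \cite[\S IV]{Harvey-Lawson:calibrated}) there is an adapted basis \( E_1,\dots,E_7 \) with \( S=\Span{E_5,E_6,E_7} \). Extending \( S \) to an orthogonal basis of \( W \), the remaining vector may be taken orthogonal to \( S \), hence of the form \( X=\sum_{a=1}^{4}c_aE_a \). The hypothesis \( \varphi_0|_W=0 \) then imposes \( \varphi_0(E_5,E_6,X)=\varphi_0(E_5,E_7,X)=\varphi_0(E_6,E_7,X)=0 \); reading the relevant monomials off \( \varphi_0 \) these equal, up to sign, \( c_3 \), \( c_2 \) and \( c_1 \), so \( c_1=c_2=c_3=0 \) and \( X\in\Span{E_4} \). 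Thus \( W=\Span{E_4,E_5,E_6,E_7}=\Span{E_1,E_2,E_3}^{\perp} \) is the orthogonal complement of the associative plane \( \Span{E_1,E_2,E_3} \), hence co-associative.

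The only piece with genuine content is the final three-line evaluation, which pins the fourth direction down to \( E_4 \); the remainder is the translation via Lemma~\ref{lem:mult-hamil} together with the normalisation of the isotropic three-plane, and the whole argument is purely pointwise, so I anticipate no real difficulty. The one thing to be careful about is to exploit \( \G_2 \)-transitivity at the level of the three-plane \( S \) first and only then locate the fourth direction, rather than attempting to normalise the full four-plane in one step. If one prefers to avoid even the short computation, it can be replaced by citing the Harvey-Lawson classification, according to which the four-planes annihilated by \( \varphi_0 \) are exactly the co-associative ones.
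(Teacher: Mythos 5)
Your argument is correct, but it reaches the conclusion by a slightly different route from the paper. The paper works with cross products: it picks orthonormal \( E_1,E_2\in\xi_p \), notes \( E_3=E_1\times E_2\in\xi_p^\perp \) because \( \varphi \) vanishes on \( \Lambda^3\xi_p \), extends to an adapted basis, uses the \( \SU(2) \)-stabiliser of \( \Set{E_1,E_2} \) to arrange \( E_4\in\xi_p \), and then reads off \( \xi_p^\perp=\Span{E_3,E_5,E_6} \) from \( E_1\times E_4 \) and \( E_2\times E_4 \), so that \( \xi_p^\perp \) is associative and \( \xi_p \) co-associative. You instead normalise an isotropic three-plane \( S\subset\xi_p \) to \( \Span{E_5,E_6,E_7} \) in one step, using the transitivity of \( \G_2 \) on \( \varphi_0 \)-null three-planes that the paper itself invokes (via Harvey--Lawson) in the \( T^3 \) discussion of \S\ref{sec:eff-T3}, and then pin down the fourth direction by three linear equations; the coefficient computation checks out against the paper's convention for \( \varphi_0 \) (the relevant monomials are \( e^{356} \), \( e^{275} \) and \( e^{167} \)). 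Both arguments are purely pointwise \( \G_2 \) linear algebra; the paper's version is self-contained and exhibits the associative complement explicitly, while yours leans on the already-established normal form for isotropic three-planes and is correspondingly shorter. Your final remark is also accurate: Harvey--Lawson characterise co-associative four-planes exactly as those on which \( \varphi_0 \) vanishes, so the whole lemma could be reduced to that citation once Lemma~\ref{lem:mult-hamil} gives \( \varphi|_{\Lambda^3\xi}\equiv0 \).
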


\begin{proof}
  Take a pair \( E_1,E_2 \) of orthonormal vectors in~\( \xi_p \).
  As \( \varphi|_{\Lambda^3\xi}\equiv0 \), we have that
  \( E_3=E_1\times E_2 \) lies in \( \xi_p^\perp \).
  We may extend \( E_1,E_2,E_3 \) to an adapted basis
  \( E_1,\dots, E_7 \).
  Using the stabiliser \( \SU(2) \) of \( E_1,E_2 \) in~\( \G_2 \), we
  may ensure that \( E_4\in\xi_p \).
  Then the relations \( E_1\times E_4=E_5 \) and
  \( E_2\times E_4=E_6 \) give \( \xi_p^\perp=\Span{E_3,E_5,E_6} \),
  and so \( \xi_p=\Span{E_1,E_2,E_4,E_7} \).
  In particular, \( \xi_p^\perp \) is associative and \( \xi_p \) is
  co-associative.
\end{proof}

A local description of \( \G_2 \)-manifolds with \( T^4 \)-symmetry
whose orbits are co-associative is given
in~\cite{Baraglia:co-associative}, and also discussed
in~\cite{Donaldson:fibrations}.
Essentially these correspond to positive minimal immersions into
\( \bR^{3,3}\cong H^2(T^4) \), and this in turn is the image of the
multi-moment map.

If \( T^4 \) is multi-Hamiltonian for \( \Hodge\varphi \), we get a
multi-moment map with \( 4 \) components as it has values in
\( \Lambda^3\lt^4\cong\bR^4 \).

\begin{lemma}
  Suppose \( T^4 \) acts effectively on \( (M,\varphi) \) and is
  multi-Hamiltonian for \( \Hodge\varphi \).
  If \( p\in M_0 \), then the \( 4 \)-dimensional subspace
  \( \xi_p\leqslant T_p M \) contains an associative subspace.
  In particular, the action can not be multi-Hamiltonian
  for~\( \varphi \).
\end{lemma}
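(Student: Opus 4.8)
The plan is to start from the multi-Hamiltonian hypothesis, extract the pointwise linear-algebra condition it imposes on \( \xi_p \), and then reduce to a normal form in which an associative subspace becomes visible. Since \( \Hodge\varphi \) has degree \( 4 = k \), Lemma~\ref{lem:mult-hamil} applies with \( r = k = 4 \) and yields \( \Hodge\varphi|_{\Lambda^4\xi}\equiv 0 \); because \( p\in M_0 \) the space \( \xi_p \) is genuinely four-dimensional, so this says precisely that the four-form \( \Hodge\varphi \) restricts to zero on \( \xi_p \). The first step is to transfer this to the three-dimensional complement \( \xi_p^\perp \): by Hodge duality, for a positively oriented orthonormal frame \( E_1,\dots,E_7 \) the value \( \Hodge\varphi(E_1,E_2,E_3,E_4) \) equals, up to sign, \( \varphi(E_5,E_6,E_7) \), so the condition \( \Hodge\varphi|_{\xi_p}=0 \) is equivalent to \( \varphi|_{\xi_p^\perp}=0 \). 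Thus \( \xi_p^\perp \) is a three-plane on which \( \varphi \) vanishes identically.

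Next I would build an adapted basis in parallel with Lemma~\ref{lem:ass-coass} and the preceding \( T^4 \)-lemma, but now organised around the complement. Choose orthonormal \( E_1,E_2\in\xi_p^\perp \). Since \( \varphi(E_1,E_2,Z)=g(E_1\times E_2,Z) \) vanishes for every \( Z\in\xi_p^\perp \), the unit vector \( E_3:=E_1\times E_2 \) is orthogonal to \( \xi_p^\perp \) and hence lies in \( \xi_p \). Extend \( E_1,E_2,E_3 \) to a \( \G_2 \) adapted basis. The remaining direction \( F_3 \) of \( \xi_p^\perp \) is orthogonal to \( E_1,E_2 \) and to \( E_3\in\xi_p \), so \( F_3\in\Span{E_4,E_5,E_6,E_7} \). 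The residual stabiliser of \( E_1,E_2 \) (equivalently, of the associative plane \( \Span{E_1,E_2,E_3} \)) is the \( \SU(2) \) acting standardly on \( \Span{E_4,E_5,E_6,E_7}\cong\bC^2 \), which is transitive on the unit sphere \( S^3 \); applying a suitable element, I may assume \( F_3=E_4 \) while keeping \( E_1,E_2,E_3 \) fixed. Then \( \xi_p^\perp=\Span{E_1,E_2,E_4} \), and therefore \( \xi_p=\Span{E_3,E_5,E_6,E_7} \). Reading off the associative triples of the model form \( \varphi_0 \), the three-plane \( \Span{E_3,E_5,E_6} \) (detected by the term \( -e^{356} \)) is associative and lies in \( \xi_p \), which proves the first claim.

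The final assertion is then immediate by contradiction. If the action were also multi-Hamiltonian for \( \varphi \), then Lemma~\ref{lem:mult-hamil}, now applied with \( r=3\leqslant k=4 \), would force \( \varphi|_{\Lambda^3\xi}\equiv 0 \); in particular \( \varphi \) would vanish on the three-plane \( \Span{E_3,E_5,E_6}\subset\xi_p \). But that plane is associative, so \( \varphi \) restricts there to its nonzero volume form — a contradiction.

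I expect the only genuine subtlety to be the normalisation step: placing the third complement direction \( F_3 \) into a standard slot by means of the residual \( \SU(2) \), and then verifying that among the basis three-planes of \( \Span{E_3,E_5,E_6,E_7} \) one is associative for the model \( \varphi_0 \). Both manipulations are of the same character as those in Lemma~\ref{lem:ass-coass}, so I anticipate no essential difficulty, while the Hodge-duality translation and the concluding contradiction are routine.
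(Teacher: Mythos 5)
Your proof is correct, but it is organised differently from the paper's. The paper works inside \( \xi_p \) throughout: it places two generators at \( E_1,E_2 \), uses the \( \SU(2) \) stabiliser to arrange \( E_4\in\xi_p \), deduces from \( \Hodge\varphi|_{\Lambda^4\xi}\equiv0 \) that the triple cross product \( E_1\times E_2\times E_4=E_7 \) lies in \( \xi_p^\perp \), and is then left with a residual unit vector \( v\in\Span{E_3,E_5,E_6} \), which it rotates into \( \Span{E_3,E_5} \) by a further circle action before exhibiting the associative plane \( \Span{E_1,xE_2-yE_4,xE_3+yE_5} \) through an explicit cross-product computation. You instead dualise at the outset: the identity \( (\Hodge\varphi)(f_1,\dots,f_4)=\varphi(f_5,f_6,f_7) \) for an oriented orthonormal frame converts the hypothesis into the statement that \( \varphi \) vanishes on the three-plane \( \xi_p^\perp \) --- precisely the Harvey--Lawson condition already normalised in \S\ref{sec:eff-T3} --- and reducing \( \xi_p^\perp \) to \( \Span{E_1,E_2,E_4} \) yields the single normal form \( \xi_p=\Span{E_3,E_5,E_6,E_7} \), from which the associative plane \( \Span{E_3,E_5,E_6} \) is read off via the \( -e^{356} \) term. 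Your route buys a cleaner endgame (no one-parameter family, no final cross-product computation) and shows as a by-product that \( \G_2 \) acts transitively on such four-planes; the paper's route stays self-contained within \( \xi_p \) and runs parallel to the preceding lemma on \( T^4 \)-actions multi-Hamiltonian for \( \varphi \). One small wording point: the \( \SU(2) \) you invoke is the pointwise stabiliser of \( E_1,E_2 \) (equivalently, the subgroup fixing \( \Span{E_1,E_2,E_3} \) vector by vector), not the stabiliser of the associative plane as a subspace, which is the larger \( \SO(4) \); what you actually use is correct.
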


\begin{proof}
  Choose a pair of orthonormal vectors \( E_1,E_2\in\xi_p \) and
  extend these to an adapted basis for~\( T_p M \).
  As before, we may now use the stabiliser \( \SU(2)\leqslant \G_2 \)
  of \( E_1,E_2 \) to ensure that \( E_4\in \xi_p \).
  Now \( \Hodge\varphi|_{\Lambda^4\xi}\equiv0 \) implies that
  \( E_7=E_1\times E_2\times E_4 \) lies in \( \xi_p^\perp \).
  Therefore, \( \xi_p=\Span{E_1,E_2,E_4,v} \) with \( v \) a unit
  vector in \( \Span{E_3,E_5,E_6} \).

  As \( \Span{E_3,E_4,E_7} \) is associative, there is a circle
  subgroup of \( \G_2 \) that acts via multiplication by \( e^{it} \)
  on \( \bC^2\cong\Span{E_1+iE_2,E_5+iE_6} \).
  Using this, we may ensure that \( v\in\Span{E_3,E_5} \).
  Writing \( v=xE_3+yE_5 \), we find that \( E_1\times v=-xE_2+yE_4 \)
  so that \( \xi_p \) contains the associative subspace
  \( \Span{E_1,xE_2-yE_4,xE_3+yE_5} \).
\end{proof}

All currently known examples of complete \( \G_2 \)-manifolds with a
multi-Hamiltonian action of \( T^4 \) have reduced holonomy.

\section{Toric \texorpdfstring{\( \G_2 \)}{G2}-manifolds: local
characterisation}
\label{sec:G2can-form}

Motivated by the discussion in~\S\ref{sec:eff-action}, we introduce
the following terminology:

\begin{definition}
  \label{def:toric}
  A \emph{toric \( \G_2 \)-manifold} is a torsion-free
  \( \G_2 \)-manifold \( (M,\varphi) \) with an effective action of
  \( T^3 \) multi-Hamiltonian for both \( \varphi \)
  and~\( \Hodge\varphi \).
\end{definition}

The purpose of this section is to derive an analogue of the
Gibbons-Hawking ansatz~\cite{Gibbons-Hawking1,Gibbons-Hawking2} for
toric \( \G_2 \)-manifolds, more specifically obtaining a local form
for a toric \( \G_2 \)-structure and describing the torsion-free
condition in these terms.
An independent derivation of such equations with an extension to
\( \SU(2) \)-actions was obtained by~\cite{Chihara:G2-from-SL-fibr}
after our announcement~\cite{Swann:PADGE}.

So assume \( (M,\varphi) \) is a toric \( \G_2 \)-manifold, with
\( T^3 \) acting effectively.
Let \( U_1,U_2,U_3 \) be infinitesimal generators for the
\( T^3 \)-action, then these give a basis for
\( \xi_p\leqslant T_p M \) for each~\( p\in M_0 \).
Denote by \( \theta = (\theta_1,\theta_2,\theta_3)^t \) the dual basis
of \( \xi_p^*\leqslant T_p^*M \):
\begin{equation*}
  \theta_i(U_j)=\delta_{ij} \eqand
  \theta(X)=0 \eqcond[1]{for all \( X\perp U_1,U_2,U_3 \).}
\end{equation*}
For brevity we write \( \theta_{ab} \) for
\( \theta_a \wedge \theta_b \), etc.

Let \( \nu = (\nu_1,\nu_2,\nu_3)^t \) and \( \mu \) be the associated
multi-moment maps; these satisfy
\begin{equation*}
  \begin{split}
    d\nu_i&=U_j\wedge U_k\hook\varphi=(U_j\times U_k)^\flat,
            \qquad(ijk)=(123), \\
    d\mu&=U_1\wedge U_2\wedge
          U_3\hook\Hodge\varphi.
  \end{split}
\end{equation*}
It follows from~\S\ref{sec:eff-T3} that \( (d\nu,d\mu) \) has full
rank on~\( M_0 \) and induces a local diffeomorphism
\( M_0/T^3\to\bR^4 \).
We define a \( 3\times3 \)-matrix \( B \) of inner products given by
\begin{equation*}
  B_{ij}=g(U_i,U_j),
\end{equation*}
and on~\( M_0 \) we put \( V=B^{-1}=\det(B)^{-1}\adj(B) \).

In these terms, we have the following local expression for the
\( \G_2 \)-structure:

\begin{proposition}
  \label{prop:G2-can-form}
  On \( M_0 \), the \( 3 \)-form \( \varphi \) and \( 4 \)-form
  \( \Hodge\varphi \) are
  \begin{gather*}
    \varphi = -\det(V)d\nu_{123} + d\mu \wedge d\nu^t\adj(V)\theta +
    \sumcic_{ijk}\theta_{ij}\wedge d\nu_k,
    \\
    \Hodge\varphi = \theta_{123}d\mu +
    \tfrac1{2\det(V)}\bigl(d\nu^t\adj(V)\theta\bigr)^2 +
    \det(V)d\mu\wedge\sumcic_{ijk}\theta_i\wedge d\nu_{jk}.
  \end{gather*}
  The associated \( \G_2 \)-metric is given by
  \begin{equation}
    \label{eq:G2-metr}
    g = \tfrac1{\det V}\theta^t\adj(V)\theta + d\nu^t\adj(V)d\nu +
    \det(V)d\mu^2.
  \end{equation}
\end{proposition}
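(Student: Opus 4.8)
The plan is to deduce all three identities from the pointwise normal form~\eqref{eq:G2formsa} by a \( \GL(3,\bR) \)-invariance argument, thereby avoiding a direct computation with the ungainly general coframe. Fix \( p\in M_0 \) and regard the right-hand sides of the three asserted identities as expressions attached to an \emph{arbitrary} basis \( W_1,W_2,W_3 \) of \( \xi_p \): let \( \theta^W \) be the dual coframe (extended by zero on \( \xi_p^\perp \)), let \( V^W \) be the inverse of the Gram matrix \( (g(W_i,W_j))_{ij} \), and set \( d\nu^W_i=W_j\wedge W_k\hook\varphi \) with \( (ijk)=(123) \) and \( d\mu^W=W_1\wedge W_2\wedge W_3\hook\Hodge\varphi \), understood as honest \( 1 \)-forms on \( T_pM \). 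For the torus generators \( W=U \) these reduce to the \( \theta \), \( V \), \( d\nu \), \( d\mu \) of the statement, the latter two being the genuine multi-moment map differentials. Writing \( R_\varphi(W) \), \( R_{\Hodge\varphi}(W) \), \( R_g(W) \) for the three right-hand sides, it suffices to show (i) that each is independent of the choice of \( W \), and (ii) that for one convenient basis they equal \( \varphi \), \( \Hodge\varphi \) and \( g \).

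For (i) I would first record how the data transform under a change of basis \( W\mapsto AW \), \( A\in\GL(3,\bR) \). One has \( \theta^{AW}=A^{-t}\theta^W \) and \( d\mu^{AW}=\det(A)\,d\mu^W \), and, since the Gram matrix goes to \( A(g(W_i,W_j))A^t \), also \( \det V^{AW}=(\det A)^{-2}\det V^W \) and \( \adj(V^{AW})=(\det A)^{-2}A\adj(V^W)A^t \). The one less obvious rule is \( d\nu^{AW}=\adj(A)^t d\nu^W \); this follows from bilinearity of the cross product together with the cofactor identity \( \sum_{ab}\epsilon_{abc}A_{ja}A_{kb}=\adj(A)_{ci} \) for \( (ijk)=(123) \). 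Feeding these into each summand and using \( \adj(A)A=\det(A)I \) and the determinant identity \( \sum_{ijk}\epsilon_{ijk}M_{mi}M_{nj}M_{rk}=\det(M)\epsilon_{mnr} \), every term turns out to be invariant. For instance \( \det V^{AW}\,d\nu^{AW}_{123}=\det V^W d\nu^W_{123} \) and \( d\mu^{AW}\wedge(d\nu^{AW})^t\adj(V^{AW})\theta^{AW}=d\mu^W\wedge(d\nu^W)^t\adj(V^W)\theta^W \); for the combinatorial terms one computes, writing the cyclic sum as \( \tfrac12\sum_{ijk}\epsilon_{ijk}(\,\cdot\,) \), that
\begin{equation*}
  \sumcic_{ijk}\theta^{AW}_{ij}\wedge d\nu^{AW}_k=\sumcic_{ijk}\theta^{W}_{ij}\wedge d\nu^{W}_k,
  \qquad
  \sumcic_{ijk}\theta^{AW}_i\wedge d\nu^{AW}_{jk}=\det(A)\sumcic_{ijk}\theta^{W}_i\wedge d\nu^{W}_{jk},
\end{equation*}
the factor \( \det(A) \) in the second cancelling against the prefactor \( \det V^{AW}d\mu^{AW} \). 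The same bookkeeping handles the quadratic term \( \tfrac1{2\det V}(d\nu^t\adj(V)\theta)^2 \) of \( \Hodge\varphi \), using \( d\nu^{AW}{}^t\adj(V^{AW})\theta^{AW}=(\det A)^{-1}d\nu^W{}^t\adj(V^W)\theta^W \).

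For (ii) I would take \( W \) to be an orthonormal adapted basis \( X_1,X_2,X_3 \) of \( \xi_p \) as in~\S\ref{sec:eff-T3}, for which \( V^W=I \), \( \theta^W=\hat\theta \), \( d\nu^W=\alpha \) and \( d\mu^W=\beta \). With these substitutions \( R_\varphi(X) \) and \( R_{\Hodge\varphi}(X) \) collapse term by term to the two lines of~\eqref{eq:G2formsa}, after the harmless reindexing \( \sumcic_{ijk}\hat\theta_{ij}\wedge\alpha_k=\sumcic_{ijk}\alpha_i\wedge\hat\theta_{jk} \) and the identity \( \tfrac12(\alpha^t\hat\theta)^2=-\sum_{i<j}\alpha_{ij}\wedge\hat\theta_{ij} \); likewise \( R_g(X)=\hat\theta^t\hat\theta+\alpha^t\alpha+\beta^2=\sum_{a=1}^7(e^a)^2=g \). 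Combining (i) and (ii) gives the three identities at every \( p\in M_0 \), and smoothness of all ingredients upgrades them to identities of forms on \( M_0 \). I do not expect a genuine obstacle here: once the transformation rules of (i) are in place, the whole argument reduces to the two determinant identities quoted above, and the only real care needed is the index bookkeeping for the two combinatorial terms.
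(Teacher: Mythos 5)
Your proposal is correct and is essentially the paper's own argument run in the opposite direction: the paper substitutes the specific symmetric square root \( A \) of \( B^{-1} \) into the pointwise normal form~\eqref{eq:G2formsa} and simplifies using \( \Lambda^2A=\det(A)A^{-1} \) and \( (\Lambda^2A)^2=\adj(V) \), whereas you verify \( \GL(3,\bR) \)-invariance of the candidate expressions for arbitrary \( A \) and then evaluate at an orthonormal frame where they reduce to~\eqref{eq:G2formsa}. The transformation rules you list (for \( \theta \), \( d\nu \), \( d\mu \), \( V \)) are exactly the identities the paper's computation rests on, so the two proofs have the same content and the same key input.
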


We note that \( M_0 \) comes with a co-associative foliation with
\( T^3 \)-symmetry whose leaves are specified by setting \( \nu \)
equal to a constant.
The corresponding distribution is given by the kernel of
\( d\nu_{123} \).
In particular, the restriction of \( \Hodge\varphi \) to the
each leaf is \(\theta_{123}d\mu \).

\begin{proof}
  We start by choosing an auxiliary symmetric matrix \( A>0 \) such
  that \( A^2=B^{-1} \) which is possible as \( B \) is positive
  definite.
  Then we set \( X_i=\sum_{j=1}^3A_{ij}U_j \) and observe that
  \begin{equation*}
    g(X_i,X_j)=(ABA)_{ij}=(A^2B)_{ij}=\delta_{ij},
  \end{equation*}
  showing that the triplet \( (X_1,X_2,X_3) \) is orthonormal.
  It follows that we can apply the formulae~\eqref{eq:G2formsa} for
  \( \varphi \) and~\( \Hodge\varphi \).

  We make the identification \( \bR^3\cong \Lambda^2\bR^3 \) via
  contraction with the standard volume form.
  Then if we let \( \Lambda^2A \) denote the induced action of \( A \)
  on \( \Lambda^2\bR^3 \), we can get
  \begin{equation*}
    \Lambda^2A = \det(A)A^{-1}.
  \end{equation*}
  In these terms, we have that
  \begin{equation*}
    \alpha=(\Lambda^2A)d\nu,\quad
    \beta=\det(A)d\mu \eqand
    \hat\theta=A^{-1}\theta=\tfrac1{\det(A)}(\Lambda^2A)\theta.
  \end{equation*}

  Turning to the expressions for the \( \G_2 \) three-form, we start
  by noting that
  \begin{equation*}
    \alpha_{123}=\det(\Lambda^2A)d\nu_{123}
  \end{equation*}
  and that \( \alpha_q(\beta\hat{\theta}_q-\hat{\theta}_{rs}) \)
  equals
  \begin{equation*}
    \sum_{i=1}^3(\Lambda^2A)_{qi}d\nu_i
    \biggl(\sum_{j=1}^3(\Lambda^2A)_{qj}d\mu\theta_j
    - \det(B)\sum_{k,\ell=1}^3(\Lambda^2A)_{rk}(\Lambda^2A)_{s\ell}
    \theta_{k\ell}\biggr),
  \end{equation*}
  where \( (qrs)=(123) \).  Summing these terms gives
  \begin{multline*}
    \varphi = -\det(\Lambda^2A)d\nu_{123} +
    d\mu\sum_{i,j=1}^3d\nu_i(\Lambda^2A)^2_{ij}\theta_j\\
    + \det(B)\sum_{i,k,\ell=1}^3
    (\Lambda^2A)_{1i}(\Lambda^2A)_{2k}(\Lambda^2A)_{3\ell}
    (d\nu_i\theta_{k\ell}+d\nu_k\theta_{\ell i}+d\nu_\ell\theta_{ik}),
  \end{multline*}
  which is simplified by observing that the expression in the second
  line above reduces to give
  \( d\nu_1\theta_{23}+d\nu_2\theta_{31}+d\nu_2\theta_{12} \), as
  required by the multi-moment map relations.
  The asserted expression for \( \varphi \) therefore follows by
  noting that \( (\Lambda^2A)^2=B/\det(B)=\adj(V) \).

  To rephrase the \( 4 \)-form expression, we observe that
  \begin{equation*}
    \hat\theta_{123}\beta=\theta_{123}d\mu,
  \end{equation*}
  consistent with the multi-moment map condition, and that
  \( \alpha_{rs}(\beta\hat\theta_q-\hat\theta_{rs}) \) equals
  \begin{equation*}
    \sum_{i,j=1}^3(\Lambda^2A)_{ri}(\Lambda^2A)_{sj}d\nu_{ij}
    \Bigl( \sum_{k=1}^3(\Lambda^2A)_{qk}d\mu\theta_k -
    \tfrac1{\det(A)^2}\sum_{k,\ell=1}^3
    (\Lambda^2A)_{rk}(\Lambda^2A)_{s\ell}\theta_{k\ell} \Bigr)
  \end{equation*}
  for \( (qrs)=(123) \).
  Upon summation, this quickly gives the stated expression for
  \( \Hodge\varphi \).

  Finally, for the metric we have
  \begin{equation*}
    \begin{split}
      g
      &= \hat\theta^t\hat\theta+\alpha^t\alpha+\beta^2
        = (A^{-1}\theta)^t A^{-1}\theta
        + (\Lambda^2Ad\nu)^t\Lambda^2Ad\nu+\det(A)^2d\mu^2\\
      &=\theta^t\bigl(\tfrac1{\det(V)}\adj(V)\bigr)\theta
        + d\nu^t\adj(V)d\nu + \det(V)d\mu^2,
    \end{split}
  \end{equation*}
  as claimed.
\end{proof}

\begin{remark}
  The expression for \( \Hodge\varphi \) may also be written as
  \begin{equation}
    \label{eq:Hphi2}
    \Hodge\varphi = \theta_{123}d\mu
    - \sumcic_{ijk}\sumcic_{pqr} V_{ip}d\nu_{jk}\theta_{qr} +
    \det(V)d\mu\wedge\sumcic_{ijk}\theta_i\wedge d\nu_{jk}.
  \end{equation}
\end{remark}

\begin{remark}
  \label{rem:GL3act-quadform}
  In the above, we have a natural action of \( \GL(3,\bR) \),
  corresponding to changing the basis of \( \lt^3 \).
  This action can sometimes be used to simplify arguments as it allows
  us to assume \( V \) is diagonal or the identity matrix at a given
  point provided only the \( \bR^3 = \widetilde{T^3} \) action is of
  relevance.
\end{remark}

\subsection{The torsion-free condition}
\label{sec:torsion-free}

Whilst it is true that any toric \( \G_2 \)-manifold can be expressed
as in Proposition~\ref{prop:G2-can-form}, the \( \G_2 \)-structure
captured by these formulae is not automatically torsion-free.

Computing \( d\varphi \) and \( d\Hodge\varphi \) involves the
exterior derivatives of \( \theta \).
By our observations in~\S\ref{sec:eff-T3}, we may think of
\( \theta \) as a connection \( 1 \)-form and its exterior derivative
\begin{equation*}
  d\theta=\omega=(\omega_1,\omega_2,\omega_3)^t
\end{equation*}
is therefore a curvature \( 2 \)-form (and as such represents an
integral cohomology class).
In terms of our parameterisation for the base space, via multi-moment
maps, we can write the curvature components of \( \omega \) in the
form
\begin{equation*}
  \omega_\ell=\sumcic_{ijk}(z_\ell^i d\nu_i d\mu + w_\ell^i d\nu_{jk}).
\end{equation*}
For convenience, we collect these curvature coefficients in two
\( 3\times 3 \) matrices \( Z=(z^i_j) \) and \( W=(w^i_j) \).

Closedness of \( \varphi \) now becomes:
\begin{equation}
  \label{eq:closedness}
  \begin{split}
    0
    &=-d\det(V)\wedge
      d\nu_{123} + d\mu(d\nu)^t\adj(V)\omega + d\mu(d\nu)^t
      d(\adj(V))\wedge\theta
      \eqbreak + \sumcic_{ijk}(\omega_i d\nu_j-\omega_j d\nu_i)\theta_k.
  \end{split}
\end{equation}
More explicitly, by wedging with \( d\nu_i \), these equations
completely determine the \( 9 \) curvature functions \( z_i^j \):
\begin{equation}
  \label{eq:Zcurv}
  z_{i}^{\ell} = \D{\adj(V)_{k\ell}}{\nu_{j}} - \D{\adj(V)_{j\ell}}{\nu_{k}}
\end{equation}
where \( (ijk)=(123) \).
Note, in particular, that the above expressions imply that \( Z \) is
traceless, \( \tr(Z)=0 \).

In addition, upon wedging with \( d\mu \), we see that
equation~\eqref{eq:closedness} forces \( W \) to be symmetric,
\( w^i_j=w_i^j \).
Finally, it follows by wedging~\eqref{eq:closedness}
with~\( \theta_{123} \) that
\begin{equation}
  \label{eq:logc-mu-der}
  \inp[\Big]{\adj(V)}{\D{V}{\mu}-W}=0 ,
\end{equation}
where \( \inp{\any}{\any} \) is the standard inner product on
\( M_3(\bR)\cong\bR^{9} \).

Addressing co-closedness of \( \varphi \), we use \eqref{eq:Hphi2} to get
\begin{equation}
  \label{eq:co-closed}
  \begin{split}
    0&=d\Hodge\varphi \\
    &=\sumcic_{ijk}\omega_i\theta_{jk}d\mu
      - \sumcic_{ijk}\sumcic_{pqr} dV_{ip} \wedge d\nu_{jk} \theta_{qr}
      - \sumcic_{ijk}\sumcic_{pqr}V_{ip} d\nu_{jk}
      (\omega_{q}\theta_{r} - \theta_{q}\omega_{r})
      \eqbreak
      +d(\det(V))\wedge d\mu\sumcic_{ijk}\theta_i d\nu_{jk}.
  \end{split}
\end{equation}

The curvature functions~\( w^i_j \) are computed from the wedge
product of~\eqref{eq:co-closed}
with~\( d\nu_i\theta_j \) to be
\begin{equation}
  \label{eq:Wcurv}
  w^j_i=\D{V_{ij}}{\mu}
\end{equation}
and it follows that equation~\eqref{eq:logc-mu-der} automatically
holds.
If instead we wedge~\eqref{eq:co-closed} with \( d\mu\theta_i \) we find
that
\begin{equation}
  \label{eq:div-free}
  \sum_{i=1}^3 \D{V_{ij}}{\nu_i} = 0\qquad j=1,2,3.
\end{equation}
We shall occasionally refer to this first order underdetermined
elliptic PDE system as the \enquote{divergence-free} condition.
Coincidentally, \eqref{eq:div-free}~appears in the study of (linear)
elasticity in continuum mechanics, expressing that the stress tensor
is divergence-free (see, e.g.,
\cite{Eastwood:elasticity1,Eastwood:elasticity2}).  This equation
together with the expression for \( \adj V \)
allows us to rewrite the coefficients \( z_{j}^{i} \) as
\begin{equation}
  \label{eq:z-exp}
  z_{\ell}^{i} = \sum_{a=1}^{3} \D{V_{j\ell}}{\nu_{a}} V_{ka} -
  \D{V_{k\ell}}{\nu_{a}} V_{ja}\qquad (ijk) = (123).
\end{equation}
One may now check that there are no further relations
from~\eqref{eq:closedness} or~\eqref{eq:co-closed}.

There are only \( 6 \) additional equations, arising from the
condition \( d\omega=0 \).
Using~\eqref{eq:Wcurv}, \eqref{eq:div-free} and~\eqref{eq:z-exp},
these equations can be expressed in the form of a second order
non-linear elliptic PDE without zeroth order terms:
\begin{equation}
  \label{eq:elliptic}
  L(V) + Q(dV) = 0.
\end{equation}
Here the operator \( L \) is given by
\begin{equation*}
  L=\Dsq{}{\mu}+\sum_{i,j}V_{ij}\Dsqm{}{\nu_i}{\nu_j},
\end{equation*}
and so has the same principal symbol as the Laplacian for the metric
\( d\mu^2 + d\nu^t B d\nu \), which, up to a conformal factor of
\( \det(V) \), is the same as the restriction of the
\( \G_2 \)-metric~\eqref{eq:G2-metr} to the horizontal space.
The operator \( Q \) is the quadratic form in \( dV \) given
explicitly by
\begin{equation*}
  Q(dV)_{ij} = - \sum_{a,b=1}^{3} \D{V_{ia}}{\nu_{b}}\D{V_{jb}}{\nu_{a}}.
\end{equation*}

In summary, we have that the torsion-free condition determines \( Z \)
and \( W \) together with three first order equations and six second
order equations.
We therefore have the following local description of toric
\( \G_2 \)-manifolds.

\begin{theorem}
  \label{thm:toric-G2-charac}
  Any toric \( \G_2 \)-manifold can be expressed in the form of
  Proposition~\ref{prop:G2-can-form} on the open dense subset of
  principal orbits for the \( T^3 \)-action.

  Conversely, given a principal \( T^3 \)-bundle over an open subset
  \(\cU\subset \bR^4 \), parameterised by \( (\nu,\mu) \), together
  with \( V\in\Gamma(\cU,S^2(\bR^3)) \) that is positive definite at
  each point.
  Then the total space comes equipped with a \( \G_2 \)-structure of
  the form given in Proposition~\ref{prop:G2-can-form}.
  This structure is torsion-free, hence toric, if and only if the
  curvature matrices \( Z \) and~\( W \) are determined by~\( V \) via
  \eqref{eq:Zcurv} and~\eqref{eq:Wcurv}, respectively, and
  \( V \)~satisfies the divergence-free condition~\eqref{eq:div-free}
  together with the non-linear second order elliptic
  system~\eqref{eq:elliptic}.  \qed
\end{theorem}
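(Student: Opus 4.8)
For the forward implication I would draw directly on the structural results of \S\ref{sec:eff-T3}. Recall that on a toric \( \G_2 \)-manifold there are no points with discrete stabiliser, so the set \( M_0 \) of points with injective \( \xi \) coincides with the set of principal orbits and is open and dense, carrying a free \( T^3 \)-action; thus \( M_0 \) is the total space of a principal \( T^3 \)-bundle. Since \( (d\nu,d\mu) \) has full rank on \( M_0 \) and descends to a local diffeomorphism \( M_0/T^3\to\bR^4 \), the pair \( (\nu,\mu) \) provides coordinates on the base. Choosing generators \( U_1,U_2,U_3 \) with dual connection form \( \theta \) and setting \( V=B^{-1} \), Proposition~\ref{prop:G2-can-form} expresses \( \varphi \) and \( \Hodge\varphi \) in exactly the asserted form with \( V \) positive definite, which establishes the first assertion.

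For the converse I would first check that the formulas of Proposition~\ref{prop:G2-can-form} define a genuine \( \G_2 \)-structure whenever \( V \) is positive definite. The key point is that the derivation of that proposition is reversible: positive-definiteness lets one take the symmetric square root \( A \) with \( A^2=V \), and the frame \( X_i=\sum_j A_{ij}U_j \) together with \( \alpha,\beta,\hat\theta \) recovers the model form~\eqref{eq:G2formsa} pointwise, so \( \varphi \) is linearly equivalent to \( \varphi_0 \) at every point, with induced metric~\eqref{eq:G2-metr}.

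The substance of the converse is the equivalence between torsion-freeness and the stated system. I would compute \( d\varphi \) and \( d\Hodge\varphi \) using \( d\theta=\omega \) with curvature coefficients encoded in \( Z \) and \( W \), arriving at~\eqref{eq:closedness} and~\eqref{eq:co-closed}, and then extract components against a spanning set of wedge monomials: wedging~\eqref{eq:closedness} with \( d\nu_i \) yields the formula~\eqref{eq:Zcurv} for \( Z \) (whence \( \tr(Z)=0 \)), wedging with \( d\mu \) forces \( W \) symmetric, and wedging with \( \theta_{123} \) gives~\eqref{eq:logc-mu-der}; wedging~\eqref{eq:co-closed} with \( d\nu_i\theta_j \) yields~\eqref{eq:Wcurv} and with \( d\mu\theta_i \) yields the divergence-free system~\eqref{eq:div-free}. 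I would then observe that~\eqref{eq:Wcurv} makes~\eqref{eq:logc-mu-der} automatic and that substituting~\eqref{eq:div-free} into~\eqref{eq:Zcurv} rewrites \( Z \) as in~\eqref{eq:z-exp}. The remaining integrability is the Bianchi identity \( d\omega=0 \): for the structure on the given bundle this says that \( \omega \), now determined through \( Z \) and \( W \) and hence through \( V \), is closed, and upon reduction via~\eqref{eq:Wcurv}, \eqref{eq:div-free} and~\eqref{eq:z-exp} this collapses the six equations into the second-order elliptic system~\eqref{eq:elliptic}. Reading these implications in both directions gives the claimed equivalence.

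The main obstacle I anticipate is precisely this final bookkeeping: confirming that \( d\varphi=0 \) and \( d\Hodge\varphi=0 \) decompose \emph{exactly} into the listed algebraic and differential conditions with nothing left over, and that the \( d\omega=0 \) equations reduce to~\eqref{eq:elliptic} and to no further relations. To control the symbol of \( L \) and streamline this reduction I would invoke Remark~\ref{rem:GL3act-quadform}, using the \( \GL(3,\bR) \) freedom to normalise \( V \) to the identity at a chosen point, so that the principal part of \( L \) visibly agrees with the Laplacian of \( d\mu^2+d\nu^t B\, d\nu \); the genuine labour is then verifying that the lower-order terms assemble precisely into the quadratic form \( Q(dV) \).
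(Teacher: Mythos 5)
Your proposal is correct and follows essentially the same route as the paper: the forward direction is exactly Proposition~\ref{prop:G2-can-form} on the principal stratum, and the converse is the component-by-component extraction of \( d\varphi=0 \), \( d\Hodge\varphi=0 \) and \( d\omega=0 \) carried out in \S\ref{sec:torsion-free}, yielding \eqref{eq:Zcurv}, \eqref{eq:Wcurv}, \eqref{eq:div-free} and \eqref{eq:elliptic} in the same order and with the same observations (symmetry of \( W \), redundancy of \eqref{eq:logc-mu-der}, the rewriting \eqref{eq:z-exp}). The only cosmetic difference is your suggestion to normalise \( V \) pointwise via the \( \GL(3,\bR) \)-action, which the paper does not need for this computation.
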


Using this characterisation, it is not difficult to construct many
explicit incomplete examples of toric \( \G_2 \)-manifolds (see
\S\ref{sec:cases}).

As one would expect, solutions with \( V \) constant are trivial in
the following sense:

\begin{corollary}
  \label{cor:V-const}
  A toric \( \G_2 \)-manifold with \( V \) constant is flat and hence
  locally isometric to \( \bR^7 \).
\end{corollary}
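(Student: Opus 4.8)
The plan is to show that constancy of $V$ forces the connection $\theta$ to be flat, and then that the resulting metric has constant coefficients in suitable coordinates, and is therefore flat.

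First I would read off the curvature from the torsion-free data. Since $(M,\varphi)$ is assumed to be a toric $\G_2$-manifold, Theorem~\ref{thm:toric-G2-charac} guarantees that $Z$ and $W$ are determined by $V$ through \eqref{eq:Zcurv} and \eqref{eq:Wcurv}. When $V$ is constant, $\adj(V)$ is constant as well, so every term on the right of \eqref{eq:Zcurv} is the derivative of a constant and $Z=0$; similarly \eqref{eq:Wcurv} gives $W=\partial V/\partial\mu=0$. Consequently all curvature components $\omega_\ell=\sumcic_{ijk}(z_\ell^i\,d\nu_i\,d\mu+w_\ell^i\,d\nu_{jk})$ vanish, so $d\theta=\omega=0$ and the principal $T^3$-bundle of principal orbits carries a flat connection. (The remaining torsion-free conditions \eqref{eq:div-free} and \eqref{eq:elliptic} are automatically satisfied for constant $V$, but this is not needed, as the manifold is toric by hypothesis.)

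Next I would trivialize. Flatness of $\theta$ means each $\theta_i$ is closed, so on a contractible neighbourhood I can choose a gauge, that is a local flat section of the bundle, in which $\theta_i=d\psi_i$ for fibre coordinates $\psi_1,\psi_2,\psi_3$. Together with $(\nu_1,\nu_2,\nu_3,\mu)$, which give a local diffeomorphism $M_0/T^3\to\bR^4$ by the discussion in~\S\ref{sec:eff-T3}, the functions $(\psi,\nu,\mu)$ then form a coordinate system on an open set of $M_0$. Substituting $\theta=d\psi$ into the metric \eqref{eq:G2-metr}, and using that $\adj(V)$ and $\det(V)$ are now constant, gives
\[
  g = \tfrac1{\det V}\,d\psi^t\adj(V)\,d\psi + d\nu^t\adj(V)\,d\nu + \det(V)\,d\mu^2,
\]
a block-diagonal, constant-coefficient, positive-definite quadratic form in the seven coordinate differentials with no cross terms. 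A constant-coefficient Riemannian metric is flat: a single linear change of the coordinates $(\psi,\nu,\mu)$ diagonalizes the constant symmetric positive-definite coefficient matrix to the identity, exhibiting $g$ as the Euclidean metric $\sum_{a=1}^{7}(dx^a)^2$. Hence $g$ is flat on $M_0$, and since the holonomy lies in $\G_2\subset\SO(7)$ this shows $(M,g)$ is locally isometric to~$\bR^7$.

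The only point I expect to need genuine care is the gauge choice in the trivialization step, namely verifying that flatness of the connection allows one to take $\theta_i=d\psi_i$ \emph{exactly}, rather than merely up to a horizontal exact correction, so that the metric is truly constant-coefficient. This is the standard fact that a flat principal bundle is locally the trivial bundle equipped with its Maurer--Cartan connection; everything else is a direct substitution and the elementary observation that constant-coefficient metrics are flat.
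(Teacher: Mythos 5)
Your proposal is correct and follows essentially the same route as the paper: constancy of \( V \) forces \( Z=W=0 \) via \eqref{eq:Zcurv} and \eqref{eq:Wcurv}, hence \( d\theta=0 \), and then \eqref{eq:G2-metr} exhibits a closed coframe in which the metric has constant coefficients. The paper merely streamlines this by first using the \( \GL(3,\bR) \)-action to normalise \( V\equiv 1 \), so that \( (\theta,d\nu,d\mu) \) is already an orthonormal parallel coframe, and notes that \( \det V=1 \) forces \( M_0=M \) so the conclusion is global.
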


\begin{proof}
  If \( V \) is constant, we may assume \( V\equiv1\).
  Now \( \det(V)=1 \) everywhere and therefore \( M_0=M \).
  Consequently, by Proposition~\ref{prop:G2-can-form}, we have a
  global orthonormal co-frame \( e^1,\dots, e^7 \) satisfying
  \( de^i=0 \) for all \( 1\leqslant i \leqslant 7 \).
\end{proof}

Let us conclude this section by remarking that \eqref{eq:div-free} can
be integrated to obtain what in a sense may be seen as an analogue of
the local potential for hypertoric manifolds
(cf.~\cite{Bielawski:tri-Hamiltonian}).
The following observation is also known from continuum mechanics.

\begin{proposition}
  \label{prop:pot-div-eqn}
  Assume that \( V\in\Gamma(\cU,S^2(\bR^3)) \)
  satisfies~\eqref{eq:div-free}, with \( \cU\subset\bR^3 \)
  simply-connected.
  Then there exists \( A\in\Gamma(\cU,S^2(\bR^3)) \) such that
  \begin{equation}
    \label{eq:Vpot-eq}
    \begin{gathered}
      V_{ii}=\Dsq{A_{jj}}{\nu_k} + \Dsq{A_{kk}}{\nu_j} -
      2\Dsqm{A_{jk}}{\nu_j}{\nu_k},\quad V_{ij}=
      \Dsqm{A_{ik}}{\nu_j}{\nu_k} + \Dsqm{A_{jk}}{\nu_k}{\nu_i} -
      \Dsq{A_{ij}}{\nu_k}-\Dsqm{A_{kk}}{\nu_i}{\nu_j},
    \end{gathered}
  \end{equation}
  where \( (ijk)=(123) \).
\end{proposition}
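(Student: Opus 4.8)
The plan is to recognise the formulae~\eqref{eq:Vpot-eq} as the classical \emph{Beltrami representation} of a divergence-free symmetric tensor. Writing \( \epsilon_{ijk} \) for the alternating symbol, a direct expansion shows that~\eqref{eq:Vpot-eq} is precisely
\[
  V_{ij} = \sum_{p,q,r,s} \epsilon_{ipq}\epsilon_{jrs}\,\Dsqm{A_{qs}}{\nu_p}{\nu_r},
\]
that is, \( V = \operatorname{inc}(A) \), where \( \operatorname{inc} \) is the (symmetric, second-order) double-curl operator of linear elasticity. With this identification the statement is exactly that the sequence
\[
  \Gamma(\cU,S^2(\bR^3)) \xrightarrow{\ \operatorname{inc}\ } \Gamma(\cU,S^2(\bR^3)) \xrightarrow{\ \operatorname{div}\ } \Gamma(\cU,\bR^3)
\]
is exact at the middle term, with \( \operatorname{div}(V)_j = \sum_i \partial V_{ij}/\partial\nu_i \) the operator of~\eqref{eq:div-free}. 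As a consistency check one notes \( \operatorname{div}\circ\operatorname{inc}=0 \): contracting the first index against \( \partial/\partial\nu_i \) pairs the antisymmetric \( \epsilon_{ipq} \) with the symmetric \( \partial^2/\partial\nu_i\partial\nu_p \), so any tensor of the form~\eqref{eq:Vpot-eq} automatically satisfies~\eqref{eq:div-free}. It remains to establish the converse.

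First I would construct an intermediate, not necessarily symmetric, potential. For each fixed~\( j \), the condition~\eqref{eq:div-free} says the vector field \( (V_{1j},V_{2j},V_{3j}) \) is divergence-free; since \( \cU \) is simply connected (so the relevant Poincar\'e lemmas apply) this yields a matrix field \( P=(P_{qj}) \) with
\[
  V_{ij} = \sum_{p,q}\epsilon_{ipq}\,\D{P_{qj}}{\nu_p},
\]
so that \( V \) is the curl of~\( P \) in its first index. This \( P \) is determined only up to the gauge \( P_{qj}\mapsto P_{qj}+\partial f_j/\partial\nu_q \), which leaves \( V \) unchanged. The second step is to use the symmetry \( V_{ij}=V_{ji} \) to replace \( P \) by a curl, in its second index, of a \emph{symmetric} tensor: symmetry of \( V \) translates, after a further \( \epsilon \)-contraction, into a closedness condition on suitable combinations of~\( P \), and a second application of the Poincar\'e lemma then produces \( A\in\Gamma(\cU,S^2(\bR^3)) \) with \( P_{qj} = \sum_{r,s}\epsilon_{jrs}\,\partial A_{qs}/\partial\nu_r \) modulo the available gauge. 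Substituting this into the expression for \( V \) gives \( V=\operatorname{inc}(A) \), which is~\eqref{eq:Vpot-eq}.

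The hard part is this second step, the symmetrisation: after the first Poincar\'e lemma \( P \) is neither symmetric nor manifestly a curl in its second index, and one must show that the symmetry of \( V \), together with the gauge freedom in~\( P \), is exactly enough to bring \( P \) into the required form. This is where the algebraic identities of the elasticity complex enter — in particular the Bianchi-type relation \( \operatorname{div}\circ\operatorname{inc}=0 \) and the Saint-Venant compatibility identities — and it amounts to a careful, if routine, bookkeeping with the Levi-Civita symbol. I would either carry this out directly, fixing the gauge for \( P \) stage by stage, or, more economically, invoke the known exactness of the linear elasticity (Calabi) complex on contractible domains, for which the continuum-mechanics references already cited (\cite{Eastwood:elasticity1,Eastwood:elasticity2}) supply the statement; the explicit component formulae~\eqref{eq:Vpot-eq} then follow by expanding \( \operatorname{inc}(A) \) and matching indices via \( (ijk)=(123) \).
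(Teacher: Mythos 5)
Your proposal follows essentially the same route as the paper's proof: there, too, the divergence-free condition is rewritten as \( d\Hodge_3(Vd\nu)=0 \), a first application of the Poincar\'e lemma produces a non-symmetric potential \( W \) (your \( P \)), the symmetry of \( V \) is then shown to be exactly a second divergence-free condition, now on \( \tW = W^T - (\tr W)1_3 \), and a second application of the Poincar\'e lemma yields \( A \). The \enquote{hard part} you flag is thus resolved without any gauge-fixing of the intermediate potential: the trace-modified transpose converts the symmetry of \( V \) directly into the integrability condition for the second step, and the final formula for \( V \) is seen to depend only on the symmetric part of \( A \), so \( A \) may simply be taken symmetric at the end.
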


\begin{proof}
  We begin by noting that equation~\eqref{eq:div-free} can be written
  more concisely as \( d \Hodge_3 (Vd\nu) = 0 \), where
  \( \nu = (\nu_1,\nu_2,\nu_3)^t \) and \( \Hodge_3 \) is the flat
  Hodge star operator with respect to \( \nu \).
  It follows that \( \Hodge_3 Vd\nu \) is exact, i.e.,
  \( Vd\nu = \Hodge_3 d(W d\nu) \) for some
  \( W \in \Gamma(\cU,M_3(\bR)) \).  The symmetry of \( V \) is then
  \begin{equation*}
    \D{W_{iq}}{\nu_p} - \D{W_{ip}}{\nu_q} = \D{W_{js}}{\nu_r} -
    \D{W_{jr}}{\nu_s} \qquad (j p q) = (1 2 3) = (i r s).
  \end{equation*}
  For \( i=j \) this relation is trivial.
  For \( i\ne j \), order \( i \) and \( j \) and take \( k \) such
  that \( (ijk) = (123) \).
  Then \( p=k \), \( q=i \), \( r=j \), \( s=k \), so the symmetry is
  \begin{equation*}
    - \D{(W_{ii}+W_{jj})}{\nu_k} + \D{W_{ik}}{\nu_i} + \D{W_{jk}}{\nu_j}
    = 0.
  \end{equation*}
  This is the same as
  \begin{equation*}
    d\Hodge_3 (\tW d\nu) = 0,
  \end{equation*}
  where \( \tW = W^T - (\tr W) 1_3 \), which is a divergence-free
  condition.
  Thus \( \tW d\nu = \Hodge_3d(A d\nu) \), for some
  \( A\in\Gamma(\cU,M_3(\bR)) \).
  It follows that \( A \) determines the symmetric matrix \( V \).
  In detail, we have
  \( \tW_{ij} = \partial A_{iq}/\partial\nu_p - \partial
  A_{ip}/\partial \nu_q \), \( (j p q) = (123) \), so using
  \( W = \tW^T - \tfrac12 (\tr\tW) 1_3 \), we get
  \begin{equation*}
    \begin{split}
      V_{ij} = \D{W_{iq}}{\nu_p} - \D{W_{ip}}{\nu_q}
      &=
        \D{}{\nu_p}\Bigl(\D{A_{qs}}{\nu_r} - \D{A_{qr}}{\nu_s} -
        \tfrac12\delta_{iq}\sum_{t=1}^3(\D{A_{tv}}{\nu_u} -
        \D{A_{tu}}{\nu_v})\Bigr)
        \eqbreak
        - \D{}{\nu_q}\Bigl(\D{A_{ps}}{\nu_r} - \D{A_{pr}}{\nu_s} -
        \tfrac12\delta_{ip}\sum_{t=1}^3(\D{A_{tv}}{\nu_u} -
        \D{A_{tu}}{\nu_v})\Bigr),
    \end{split}
  \end{equation*}
  \( (j p q) = (1 2 3) = (i r s) = (t u v) \).
  To simplify this, consider separately the cases where \( i=j \) and
  where \( i\ne j \).
  First for \( i = j \), we get \( p = r \), \( q = s \) distinct
  from~\( i \), so
  \begin{equation*}
    V_{ii} = \Dsq{A_{rr}}{\nu_s} + \Dsq{A_{ss}}{\nu_r} -
    \Dsqm{(A_{sr}+A_{rs})}{\nu_r}{\nu_s}.
  \end{equation*}
  For \( i\ne j \), again rearrange and introduce \( k \) so that
  \( (i j k) = (1 2 3) \).
  Then \( p=k \), \( q=i \), \( r=j \), \( s=k \), and
  \begin{equation*}
    \begin{split}
      V_{ij} &= \D{}{\nu_k}\Bigl(\D{A_{ik}}{\nu_j} - \D{A_{ij}}{\nu_k} -
               \tfrac12\delta_{ii}\sum_{t=1}^3(\D{A_{tv}}{\nu_u} -
               \D{A_{tu}}{\nu_v})\Bigr) \eqbreak -
               \D{}{\nu_i}\Bigl(\D{A_{kk}}{\nu_j} - \D{A_{kj}}{\nu_k} -
               \tfrac12\delta_{ik}\sum_{t=1}^3(\D{A_{tv}}{\nu_u} -
               \D{A_{tu}}{\nu_v})\Bigr),
    \end{split}
  \end{equation*}
  which reduces to an expression that only depends on the symmetric
  part of~\( A \), so we may take \( A \) to be symmetric.
\end{proof}

Note that the right-hand side of~\eqref{eq:Vpot-eq} is not elliptic,
so a rewriting of Theorem~\ref{thm:toric-G2-charac} looses ellipticity
of that system.
The papers~\cite{Eastwood:elasticity2, Eastwood:elasticity1} contain a
description of the kernel of \( A \mapsto V(A) \).

\subsection{Digression: natural PDEs for toric
\texorpdfstring{\( \G_2 \)}{G2}-manifolds}
\label{sec:natural-eq}

As we have already seen, toric \( \G_2 \)-manifolds come with an
associated action of \( \GL(3,\bR) \).
Thus a way of approaching equation~\eqref{eq:elliptic}, is to
understand how \( L \) and \( Q \) transform with respect to this
action.

The general linear group~\( \GL(3,\bR) \) acts by changing the basis
of \( \lie t^3 \) and so of \( \xi_p\cong\bR^3 \), \( p \in M_0 \).
It is useful to write \( \GL(3,\bR)\cong\bR^\times\times\SL(3,\bR) \)
and accordingly express irreducible representations in the form
\( \ell^p\Gamma_{a,b} \), where \( \Gamma_{a,b} \) is an irreducible
representation of \( \SL(3,\bR) \) (see, e.g.,
\cite{Baston-E:Penrose}) and \( \ell \) is the standard
one-dimensional representation of \( \bR^\times\to\bR\setminus\{0\} \)
given by \( t\mapsto t \).
As an example, this means that we have for \( p\in M_0 \) that
\( \xi_p=\ell^1\Gamma_{0,1} \).

So let \( U = (\bR^3)^*=\ell^{-1}\Gamma_{1,0} \), viewed as a
representation of \( \GL(3,\bR) \).
Then \( V \in S^2(U)=\ell^{-2}\Gamma_{2,0} \).
The collection of first order partial derivatives
\( V^{(1)} = (V_{ij,k}) = (\partial V_{ij}/\partial\nu_k) \) is then
an element of
\( S^2(U)\otimes \ell^{-3} U^* =\ell^{-4}\Gamma_{2,0} \otimes
\Gamma_{0,1} \).
As a \( \GL(3,\bR) \) representation this decomposes as
\begin{equation*}
  S^2(U)\otimes \ell^{-3} U^* = \ell^{-4} \Gamma_{1,0} \oplus \ell^{-4} \Gamma_{2,1},
\end{equation*}
with the projection to \( \Gamma_{1,0} \) being just the contraction
\( S^2(\Gamma_{1,0})\otimes \Gamma_{0,1}\to \Gamma_{1,0} \), and
\( \Gamma_{2,1} \) denoting the kernel of this map.
The divergence-free equation~\eqref{eq:div-free} just says this
contraction is zero, so \( V^{(1)} \in \ell^{-4} \Gamma_{2,1} \).

The operator \( Q \) is a symmetric quadratic operator on
\( V^{(1)} \) with values in \( S^2(U) \).
Thus we may think of \( Q(dV) \) as an element of the space
\( \ell^6S^2(\Gamma_{2,1})^* \otimes S^2(\Gamma_{1,0}) \).
This space contains exactly one submodule isomorphic to
\( \ell^6 \) as \( S^2(\Gamma_{1,0})^* \) is a submodule of
\( S^2(\Gamma_{2,1})^* \).
Direct computations show that \( Q(dV) \) belongs to \( \ell^6 \).

Similarly, we may discuss the second order terms
in~\eqref{eq:elliptic}.
We have
\( V^{(2)} = (V_{ij,k\ell}) \in R = (S^2(U) \otimes S^2(\ell^{-3}U^*))
\cap (\ell^{-6}\Gamma_{2,1}\otimes \Gamma_{0,1}) \).
Now, ignoring the \( \partial^2V/\partial\mu^2 \) term, \( L(V) \) is
built from a product of \( V \) with \( V^{(2)} \) and takes values in
\( S^2(U) \).
So \( L(V) \in S^2(U)^*\otimes R^* \otimes S^2(U) \).
In this case, there are two submodules isomorphic to \( \ell^6 \),
but only one appears in \( L(V) \), corresponding to the contractions
\begin{equation*}
  \acontraction{}{S^2(U^*)}{\otimes\Bigl( \overline{S^2(U^*)}\otimes }{S^2(\ell^3U)}
  \acontraction[2ex]{S^2(U^*)\otimes\Bigl(}{S^2(U^*)}{\otimes S^2(\ell^3U)\Bigr)\otimes}{S^2(U)}
  S^2(U^*)\otimes\Bigl( S^2(U^*)\otimes S^2(\ell^3U)\Bigr)\otimes S^2(U)\to \ell^6.
\end{equation*}
Contracting in this way is arguably the most natural choice.

Finally, addressing the terms of \( L \) involving
\( \partial^2V/\partial\mu^2 \), we have that
\( \partial/\partial \mu \) is an element of \( \ell^{-3} \), and
therefore \( \partial^2V/\partial\mu^2 \) belongs to
\( \ell^6S^2(U)^*\otimes S^2(U) \).
In fact, it is easy to see that \( \partial^2V/\partial\mu^2 \)
belongs to the one-dimensional summand isomorphic to
\( \ell^6 \) as we are tracing.

In conclusion, we have that \( L \) and \( Q \) are preserved up to
scale by \( \GL(3,\bR) \) change of basis, and this specifies \( Q \)
uniquely.

\begin{proposition}
  Under the action of \( \GL(3,\bR) \), \( L(V) \) and \( Q(dV) \)
  transform as elements of \( \ell^6 \).
  Moreover, up to scaling, \( Q \) is the unique \( S^2(U) \)-valued
  quadratic form in \( dV \) with this property.  \qed
\end{proposition}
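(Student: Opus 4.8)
The statement is entirely representation-theoretic, so the plan is to recast both claims as facts about \( \GL(3,\bR)\cong\bR^\times\times\SL(3,\bR) \)-modules, where \( \ell^p \) records the central weight and \( \Gamma_{a,b} \) the \( \SL(3,\bR) \)-type, and then reduce them to \( \SL(3,\bR) \) plethysm computations. First I would collect the weights already assembled above: \( V\in S^2(U)=\ell^{-2}\Gamma_{2,0} \), the derivations \( \partial/\partial\nu\in\ell^{-3}U^{*}=\ell^{-2}\Gamma_{0,1} \) and \( \partial/\partial\mu\in\ell^{-3} \), and hence the first jet \( V^{(1)}=dV\in\ell^{-4}\Gamma_{2,0}\otimes\Gamma_{0,1}=\ell^{-4}(\Gamma_{1,0}\oplus\Gamma_{2,1}) \). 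The divergence-free equation~\eqref{eq:div-free} is exactly the vanishing of the contraction onto \( \Gamma_{1,0} \), so on solutions \( dV\in\ell^{-4}\Gamma_{2,1} \); this reduction is what I would carry through the whole argument. Here \enquote{transforming as \( \ell^6 \)} means that the \emph{operator} (its coefficient tensor) is \( \SL(3,\bR) \)-invariant and homogeneous of central weight \( \ell^6=\det^2 \), equivalently \( \GL(3,\bR) \)-equivariant up to the scalar \( \det^2 \); since both terms of~\eqref{eq:elliptic} then carry the same factor, this is precisely what makes the equation \( L(V)+Q(dV)=0 \) natural under change of basis of \( \lt^3 \).

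For the transformation claim I would treat \( Q \) and \( L \) in turn. As a symmetric quadratic form in \( dV \) with values in \( S^2(U) \), the operator \( Q \) is a tensor in \( S^2\bigl((V^{(1)})^{*}\bigr)\otimes S^2(U)=\ell^{6}\,S^2(\Gamma_{2,1})^{*}\otimes\Gamma_{2,0} \), whose central weight is \( \ell^6 \) as required. I would decompose this space into \( \GL(3,\bR) \)-irreducibles and isolate the \( \SL(3,\bR) \)-invariant part; granting the multiplicity-one fact below, it is one-dimensional, spanned by the composite of the inclusion \( \Gamma_{0,2}=S^2(\Gamma_{1,0})^{*}\hookrightarrow S^2(\Gamma_{2,1})^{*} \) with the contraction \( \Gamma_{0,2}\otimes\Gamma_{2,0}\to\bR \). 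It then suffices to verify, by the direct computation signalled above, that the explicit expression \( Q(dV)_{ij}=-\sum_{a,b}(\partial V_{ia}/\partial\nu_b)(\partial V_{jb}/\partial\nu_a) \) does not vanish, so that \( Q \) is a nonzero multiple of this distinguished invariant and therefore transforms as \( \ell^6 \). For \( L \) I would argue similarly: the coefficient tensor lies in \( S^2(U)^{*}\otimes R^{*}\otimes S^2(U) \), again of central weight \( \ell^6 \); the term \( \partial^2/\partial\mu^2 \) manifestly sits in a copy of \( \ell^6 \) (it is a trace, using \( \partial/\partial\mu\in\ell^{-3} \)), and the subtlety is that this ambient module contains \emph{two} copies of \( \ell^6 \), so the one remaining point is to check that the second-order \( \nu \)-term selects the same copy, namely the double contraction of \( V \) with \( V^{(2)} \) indicated above.

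For uniqueness I would note that an \( S^2(U) \)-valued quadratic form in \( dV \) transforming as \( \ell^6 \) is precisely an \( \SL(3,\bR) \)-invariant element of \( \ell^{6}\,S^2(\Gamma_{2,1})^{*}\otimes\Gamma_{2,0} \), so the space of such forms has dimension equal to the multiplicity of \( \Gamma_{2,0} \) in \( S^2(\Gamma_{2,1}) \). Hence uniqueness up to scale is equivalent to this multiplicity being one. It is essential here that \( dV \) has been reduced to \( \ell^{-4}\Gamma_{2,1} \) by~\eqref{eq:div-free}: on the full first jet \( \ell^{-4}(\Gamma_{1,0}\oplus\Gamma_{2,1}) \) the summand \( S^2(\Gamma_{1,0})=\Gamma_{2,0} \) contributes a second invariant, so uniqueness can only hold modulo the divergence-free condition. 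Having exhibited \( Q \) as a nonzero invariant of this type, the argument closes once the multiplicity is pinned down.

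The main obstacle is exactly this last plethysm: decomposing the \( 120 \)-dimensional module \( S^2(\Gamma_{2,1}) \) and confirming that \( \Gamma_{2,0} \) occurs with multiplicity exactly one. The existence of the submodule \( \Gamma_{0,2}\hookrightarrow S^2(\Gamma_{2,1})^{*} \) already gives multiplicity at least one, so only the upper bound needs work; I would obtain it by a weight-multiplicity or character computation for \( \SL(3,\bR) \), or by expanding \( S^2(\Gamma_{2,1}) \) through the branching \( \Gamma_{2,1}\subset\Gamma_{2,0}\otimes\Gamma_{0,1} \) together with Clebsch--Gordan bookkeeping. A secondary, purely mechanical, obstacle is the non-vanishing and genuine \( \SL(3,\bR) \)-invariance of the explicit \( Q \) and of the \( \nu \)-term of \( L \); with multiplicity one in hand this reduces to evaluating the formulae at a single convenient \( V \), for instance a diagonal one as permitted by Remark~\ref{rem:GL3act-quadform}.
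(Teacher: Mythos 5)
Your proposal follows essentially the same route as the paper's \S\ref{sec:natural-eq}: the identical weight bookkeeping for \( \ell^p\Gamma_{a,b} \), the reduction of \( dV \) to \( \ell^{-4}\Gamma_{2,1} \) via~\eqref{eq:div-free}, the placement of \( Q \) in \( \ell^6 S^2(\Gamma_{2,1})^*\otimes S^2(\Gamma_{1,0}) \) with uniqueness coming from the multiplicity of the trivial \( \SL(3,\bR) \)-submodule, and the observation that the ambient module for \( L \) contains two copies of \( \ell^6 \) of which the stated contraction selects one. The only difference is that you make explicit the plethysm (multiplicity of \( \Gamma_{2,0} \) in \( S^2(\Gamma_{2,1}) \) being exactly one) that the paper asserts as a direct computation; this is a fair and correct sharpening, not a different argument.
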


\section{Behaviour near singular orbits}
\label{sec:sing-orb}

In our description of toric \( \G_2 \)-manifolds, we have so far been
focusing on the regular part \( M_0 \subset M \).
We now turn to address what happens near a singular orbit for the
\( T^3 \)-action.

\subsection{Flat models}
\label{sec:flat-models}

For a complete hyperK\"ahler manifold with a tri\hyphen Hamiltonian
action of \( T^n \) it is known that the hyperK\"ahler moment map
induces a homeomorphism \( M/T^n \to \bR^n \)
(see~\cite{Dancer-S:hK,Swann:twist-vs-mod}).
In this section, we establish the analogous result for toric
\( \G_2 \)-manifolds for flat models with a singular orbit; later we
will prove this in general.
There are two cases to consider as the singular orbit can be either
\( S^1 \) or \( T^2 \), corresponding to a stabiliser of dimension
\( 2 \) or~\( 1 \).

\subsubsection{Two-dimensional stabiliser}
\label{sec:two-dimens-stab}

Consider the flat model \( M = S^1\times\bC^3 \) equipped with the
\( 3 \)-form
\begin{equation*}
  \varphi = \tfrac i2 dx\wedge(dz_1\wedge d\overline z_1+dz_2\wedge
  d\overline z_2+dz_3\wedge d\overline z_3)+\re (dz_1\wedge dz_2\wedge
  dz_3),
\end{equation*}
with dual \( 4 \)-form
\begin{equation*}
  \Hodge\varphi = \im(dz_1\wedge dz_2\wedge dz_3) \wedge dx-
  \tfrac18(dz_1\wedge d\overline z_1+dz_2\wedge d\overline
  z_2+dz_3\wedge d\overline z_3)^2,
\end{equation*}
where \( z_j = x_j+iy_j \), \( j=1,2,3 \), are standard complex
coordinates on~\( \bC^3 \).

There is a natural effective \( T^3 \)-action on \( M \): writing
\( T^3=S^1\times T^2 \), the \( T^2 \) acts as a maximal torus of
\( \SU(3) \) on \( \bC^3 \) and the remaining circle acts naturally on
the \( S^1 \) factor.
Correspondingly, we have generating vector fields given by
\begin{equation*}
  U_1 = \D{}x,\
  U_2= 2\re\Bigl(
  i\Bigl(z_1\D{}{z_1}-z_3\D{}{z_3}\Bigr)
  \Bigr),\
  U_3= 2\re\Bigl(i\Bigl(z_2\D{}{z_2}-z_3\D{}{z_3}\Bigr)\Bigr).
\end{equation*}
It follows that the matrix \( B \) is
\begin{equation*}
  B =
  \begin{pmatrix}
    1 & 0 & 0 \\
    0 & \abs{z_1}^2 + \abs{z_3}^2 & \abs{z_3}^2\\
    0 & \abs{z_3}^2 & \abs{z_2}^2 + \abs{z_3}^2
  \end{pmatrix}
\end{equation*}
and so \( V \) takes the form
\begin{equation*}
  V=
  \begin{pmatrix}
    1 & 0 & 0\\
    0 & (\abs{z_2}^2+ \abs{z_3}^2)/A& - \abs{z_3}^2/A\\
    0 & -\abs{z_3}^2/A& (\abs{z_1}^2+\abs{z_3}^2)/A
  \end{pmatrix}
  ,
\end{equation*}
where \( A = \abs{z_1z_2}^2 + \abs{z_3z_1}^2 + \abs{z_2z_3}^2 \).
We have that \( M_0 \) is the complement of the following sets:
\( M^{T^2}=S^1 \times \Set{0} \) where the singular stabiliser is
\( T^2 = \Set{1} \times T^2 \leqslant S^1 \times T^2 = T^3 \);
\( M^{S^1_i} = S^1 \times \Set{z_j=z_k=0,\;z_i \ne 0} \),
\( (ijk) = (123) \), which all have singular stabiliser circles
\( S^1_i\leqslant T^2 \leqslant T^3 \).

For the multi-moment maps, we first compute
\begin{equation*}
  d\mu = U_1\wedge U_2\wedge U_3 \hook \Hodge\varphi=
  d\im(z_1z_2z_3),
\end{equation*}
giving that, up to addition of a constant, \( \mu = \im(z_1z_2z_3) \).
Similarly, we find \( \nu_1 = -\re(z_1z_2z_3) \), from
\( U_2\wedge U_3\hook\varphi \), and
\begin{equation*}
  d\nu_2 = U_3\wedge U_1 \hook \varphi
  =\tfrac12 d(\abs{z_2}^2-\abs{z_3}^2).
\end{equation*}
So, again up to addition of a constant,
\( \nu_2 = \tfrac12 (\abs{z_2}^2-\abs{z_3}^2) \).
Finally, we have that
\( \nu_3= -\tfrac12(\abs{z_1}^2 - \abs{z_3}^2) \).
Summarising, the multi-moment maps are
\begin{equation*}
  \nu_1 + i\mu = - \overline{z_1z_2z_3},\quad
  \nu_2 = \tfrac12(\abs{z_2}^2-\abs{z_3}^2),\quad
  \nu_3 = - \tfrac12(\abs{z_1}^2-\abs{z_3}^2).
\end{equation*}

\begin{proposition}
  \label{prop:hom-flat-2d-stab}
  The multi-moment map
  \( (\nu,\mu)\colon S^1 \times \bC^3 \to \bR^{3}\times \bR = \bR^4 \)
  induces a homeomorphism
  \( (S^1\times \bC^3)/T^3 = \bC^3/T^2 \to \bR^4 \).
\end{proposition}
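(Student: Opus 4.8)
The plan is to reduce the $T^3$-quotient to a $T^2$-quotient of $\bC^3$ and then show that the multi-moment map descends to a continuous \emph{proper} bijection, which is automatically a homeomorphism onto $\bR^4$. First I would note that $U_1=\partial/\partial x$ generates the free $S^1$-action on the first factor and that none of $\nu_1,\nu_2,\nu_3,\mu$ depends on $x$; hence $(S^1\times\bC^3)/T^3=\bC^3/T^2$, where the residual $T^2$ acts as the maximal torus $\diag(e^{it},e^{is},e^{-i(t+s)})$ of $\SU(3)$. Writing $r_j=\abs{z_j}^2$ and $w=z_1z_2z_3$, the multi-moment map reads $\nu_1+i\mu=-\overline{w}$ together with $\nu_2=\tfrac12(r_2-r_3)$ and $\nu_3=\tfrac12(r_3-r_1)$.

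The crux is bijectivity, which I would split into an orbit-separation lemma and an inversion step. For separation, I claim that the $T^2$-orbit of a point is determined by $(r_1,r_2,r_3,w)$: writing $z_j=\sqrt{r_j}\,e^{i\phi_j}$, two points with the same moduli and the same $w$ have (where all $r_j>0$) the same $\phi_1+\phi_2+\phi_3\pmod{2\pi}$, and then $t=\phi_1'-\phi_1$, $s=\phi_2'-\phi_2$ carries one to the other; the loci where some $r_j$ vanish are handled by observing that the phase of a vanishing coordinate is immaterial. For inversion, $(\nu_1,\mu)$ recovers $w$ and $\abs{w}^2=\nu_1^2+\mu^2=:\rho^2$ through the real-linear isomorphism $\bC\cong\bR^2$, while the linear relations give $r_1=r_3-2\nu_3$ and $r_2=r_3+2\nu_2$. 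The remaining constraint $r_1r_2r_3=\rho^2$ becomes the cubic $f(r_3):=r_3(r_3-2\nu_3)(r_3+2\nu_2)=\rho^2$. On the half-line $r_3\geqslant c:=\max(0,2\nu_3,-2\nu_2)$ all three factors are nonnegative and strictly increasing, so $f$ is a continuous strictly increasing bijection $[c,\infty)\to[0,\infty)$ with $f(c)=0$; this yields a unique admissible $r_3$, hence unique $r_1,r_2\geqslant0$. Existence of this root gives surjectivity and uniqueness gives injectivity, so combined with the separation lemma the induced map $\overline{F}\colon\bC^3/T^2\to\bR^4$ is a continuous bijection.

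To upgrade this to a homeomorphism I would establish properness of $F\colon\bC^3\to\bR^4$: if $\nu_2,\nu_3$ and $\rho$ stay bounded then $r_3$ is bounded (otherwise $r_1,r_2\to\infty$ as well and $r_1r_2r_3\to\infty$), hence all $r_j$ and $\abs{z}$ are bounded, so $F^{-1}(K)$ is compact for every compact $K$. Since $\overline{F}^{-1}(K)=\pi(F^{-1}(K))$ is then compact, $\overline{F}$ is proper; as $\bC^3/T^2$ is Hausdorff and $\bR^4$ is locally compact Hausdorff, a continuous proper bijection onto it is closed, and a continuous closed bijection is a homeomorphism.

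The main obstacle is the inversion step: the monotonicity of $f$ on $[c,\infty)$ must be combined carefully with the orbit-separation lemma precisely along the singular loci where one or more $r_j$ vanish. There one has to confirm both that the boundary value $f(c)=0$ matches $\rho=0$ and that the phase data degenerate in exactly the way that keeps the fibres of $(r_1,r_2,r_3,w)$ equal to single $T^2$-orbits; everything else is routine.
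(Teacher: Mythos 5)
Your proof is correct, and the algebraic core (reduction to $\bC^3/T^2$, the orbit-separation lemma via $(r_1,r_2,r_3,w)$, and the unique root of the cubic $f(r_3)=\rho^2$ on $[c,\infty)$ by monotonicity) coincides with the paper's argument, which uses the same cubic $f(t)=t(t-a)(t-b)=c$ in the variables $t=\abs{z_3}^2$, $a=2\nu_3$, $b=-2\nu_2$. Where you genuinely diverge is the final topological upgrade from \enquote{continuous bijection} to \enquote{homeomorphism}. The paper first establishes that $\bC^3/T^2$ is itself homeomorphic to $\bR^4$ — quoting Hughes--Swartz for $S^5/T^2\cong S^3$ and giving an explicit simplex-and-phases argument — and then invokes Brouwer's invariance of domain for a continuous bijection $\bR^4\to\bR^4$. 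You instead prove properness of $F\colon\bC^3\to\bR^4$ (boundedness of $r_3$, hence of all $r_j$, on preimages of compacta), push it down to the quotient via $\overline F^{-1}(K)=\pi(F^{-1}(K))$, and use that a continuous proper bijection onto a locally compact Hausdorff space is closed, hence a homeomorphism. Your route is more elementary and self-contained: it needs neither invariance of domain nor prior knowledge of the topology of $\bC^3/T^2$, and it delivers $\bC^3/T^2\cong\bR^4$ as a corollary rather than consuming it as an input. What the paper's route buys in exchange is machinery that transfers to the later, non-flat analysis in \S\ref{sec:ident-quot}, where the maps are only locally defined and degree-theoretic and invariance-of-domain arguments are the ones that survive; a global properness argument has no analogue there. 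Both proofs are complete for the proposition as stated.
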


As the referee points out, this map \( \bC^{3}/T^{2}\to\bR^{4} \) has
also been considered in~\cite{Aganagic-KMV:top-vertex}.

\begin{proof}
  Let us introduce some new variables.
  Putting \( t = \abs{z_3}^2 \), we have \( \abs{z_1}^2 = t - a \),
  \( \abs{z_2}^2 = t - b \), where \( a = 2\nu_3 \) and
  \( b = -2\nu_2 \).
  For
  \( c = \abs{\mu}^2+\abs{\nu_1}^2 = \abs{z_1}^2\abs{z_2}^2\abs{z_3}^2
  \), we have the relation
  \begin{equation*}
    f(t) \coloneqq t(t-a)(t-b) = c.
  \end{equation*}
  Note that \( f \) has zeros at \( 0 \), \( a \) and~\( b \).
  The constraints \( \abs{z_i}^2 \geqslant 0 \), imply
  \( t \geqslant x \coloneqq \max\Set{0,a,b} \).
  Now \( f(t) \to \infty \) as \( t \to \infty \), so
  \( f(\halfopen{x,\infty}) = \halfopen{0,\infty} \) and \( f \) is
  strictly monotone increasing on \( \halfopen{x,\infty} \).
  Thus \( f(t) = c \) has a unique solution
  \( t = t(a,b,c) \geqslant x \) for each \( a,b \in \bR \) and each
  \( c\geqslant 0 \).

  Write \( \rho\colon \bC^3/T^2 \to \bR^4 \) for the map induced by
  \( (\nu,\mu) \).
  Given \( (p,q) \in \bR^3 \times \bR = \bR^4 \), let
  \( t = t(2p_3,-2p_2,q^2+p_1^2) \), where \( t(a,b,c) \) is defined
  above.
  Now \( \rho(z_1,z_2,z_3) = (p,q) \) if and only if
  \( (\abs{z_1}^2,\abs{z_2}^2,\abs{z_3}^2) = (t-2p_3,t+2p_2,t) \) and
  \( z_1z_2z_3 = (iq-p_1) \).
  One sees that these equations are consistent, \( \rho \)~is
  surjective, and solutions are unique up to the action of
  \( T^2 \leqslant \SU(3) \).
  Thus \( \rho \) is a continuous bijection \( \bC^3/T^2 \to \bR^4 \).

  But \( \bC^3/T^2 \) is homeomorphic to \( \bR^4 \).
  Indeed, it follows from the results of~\cite{Hughes-S:quotients}
  that \( S^5/T^2 \) is homeomorphic to \( S^3 \), so the claimed
  result follows by considering the cones on these spaces.

  To be explicit, we note that
  \( S^5 = \Set{(z_1,z_2,z_3) \with \abs{z_1}^2 + \abs{z_2}^2 +
  \abs{z_3}^2 = 1} =
  \Set{(t_1^{1/2}e^{iu},t_2^{1/2}e^{iv},t_3^{1/2}e^{iw}) \with t_i
  \geqslant 0,\; t_1+t_2+t_3=1} \) with \( T^2 \)-action induced by
  \( (e^{i\theta},e^{i\phi})\cdot(e^{iu},e^{iv},e^{iw}) =
  (e^{i(\theta+u)},e^{i(\phi+v)},e^{i(w-\theta-\phi)}) \).
  Each \( T^2 \)-orbit contains a representative with \( u=v=w \).
  Furthermore, this representative is unique modulo \( 2\pi/3 \)
  unless some \( t_i \) is zero, since
  \( \theta+u=\phi+v=w-\theta-\phi \pmod{2\pi} \) implies the common
  value \( a \) satisfies \( 3a = u+v+w \pmod{2\pi} \) and each such
  \( a \) gives a unique solution for \( \theta \) and \( \phi \)
  mod~\( 2\pi \).

  Topologically the two-simplex
  \( \Set{(t_1,t_2,t_3) \with t_i \geqslant 0,\; t_1+t_2+t_3=1} \) is
  a unit disc \( \Set{ w \in \bC \with \abs{w}^2 \leqslant 1} \).
  The quotient \( S^5/T^2 \) has circle fibres over the interior of
  the disc that collapse to points on the boundary.
  Thus \( S^5/T^2 \) is topologically
  \( \Set{(z,w) \in \bC^2 \with \abs{z}^2+\abs{w}^2=1} = S^3 \).

  Now \( \rho \) is a continuous bijection
  \( \bR^4 = \bC^3/T^2 \to \bR^4 \).
  By Brouwer's invariance of domain
  (see~\cite[Thm.~7.12]{Madsen-T:deRham}), it follows that
  \( \rho \)~is a homeomorphism.
\end{proof}

\subsubsection{One-dimensional stabiliser}
\label{sec:one-dimens-stab}

The previous model contains points with stabiliser~\( S^1 \), but we
can also provide a simple standard model in this case.
Let \( M = (T^2 \times \bR) \times \bC^2 \) with the \( 3 \)-torus
split as \( T^3 = T^2 \times S^1 \), the first \( T^2 \)-factor acting
on the corresponding torus in the first factor of \( M \), and the
\( S^1 \)-factor acting as the maximal torus of \( \SU(2) \)
on~\( \bC^2 \).
Introduce standard (local) coordinates \( x,y,u \) for
\( T^2\times \bR \) and \( (z,w) \) for~\( \bC^2 \).

The \( \G_2 \) \( 3 \)-form may be written as
\begin{equation*}
  \begin{split}
    \varphi
    &= du \wedge dx \wedge dy - du \wedge \tfrac i2(dz \wedge
      d\overline z + dw \wedge d\overline w) \eqbreak
      - \re((dx-idy)\wedge dz \wedge dw),
  \end{split}
\end{equation*}
with dual \( 4 \)-form
\begin{equation*}
  \begin{split}
    \Hodge\varphi
    &= \tfrac18 (dz\wedge d\overline z + dw \wedge d\overline w)^2
      + dx \wedge dy \wedge \tfrac i2(dz \wedge
      d\overline z + dw \wedge d\overline w) \eqbreak
      + du \wedge \im ((dx - i dy) \wedge dz \wedge dw).
  \end{split}
\end{equation*}
The generating vector fields are then
\begin{equation*}
  U_1 = \D{}x,\quad U_2 = \D{}y,\quad
  U_3 = - 2\re\Bigl(i\Bigl(z\D{}z - w\D{}w\Bigr)\Bigr).
\end{equation*}
The matrix \( V \) is now
\begin{equation*}
  \begin{pmatrix}
    1 & 0 & 0 \\
    0 & 1 & 0 \\
    0 & 0 & 1/(\abs z^2+\abs w^2)
  \end{pmatrix}.
\end{equation*}

We compute the multi-moment maps:
\begin{align*}
  d\mu &= U_1 \wedge U_2 \wedge U_3 \hook \Hodge\varphi
         = d(\tfrac12(\abs z^2-\abs w^2)), \\
  d\nu_1 &= U_2 \wedge U_3 \hook \varphi
           = d \re(zw), \\
  d\nu_2 &= U_3 \wedge U_1 \hook \varphi
           = d\im(zw), \\
  d\nu_3 &= U_1 \wedge U_2 \hook \varphi
           = du.
\end{align*}
Thus, we may take
\begin{equation*}
  \mu = \tfrac12(\abs z^2-\abs w^2),\quad
  \nu_1 + i\nu_2 = zw,\quad
  \nu_3 = u.
\end{equation*}
Note that, as expected, \( (\mu,\nu_1,\nu_2) \) are just the standard
hyperK\"ahler moment maps for the action of \( S^1 \) on
\( \bH = \bC^2 \).
We know that this is essentially the Hopf fibration \( S^3 \to S^2 \)
on distance spheres in \( \bH = \bR^4 \) and \( \bR^3 \).  Indeed
\begin{equation*}
  \mu^2 + \nu_1^2 + \nu_2^2 = \tfrac14 (\abs z^4 - 2\abs z^2\abs w^2 +
  \abs w^4) + \abs z^2\abs w^2 = \tfrac14 (\abs z^2 + \abs w^2)^2
\end{equation*}
so \( 3 \)-spheres of radius \( r \) are mapped to \( 2 \)-spheres of
radius \( r^2/2 \).  Again we get:

\begin{proposition}
  \label{prop:hom-flat-1d-stab}
  The multi-moment map
  \( (\nu,\mu)\colon (T^2 \times\bR)\times \bC^2 \to \bR^4 \) induces
  a homeomorphism
  \( ((T^2\times \bR)\times \bC^2)/T^3 = \bR \times \bH/S^1 \to \bR^4
  \).  \qed
\end{proposition}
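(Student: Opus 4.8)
The plan is to exploit that both the \( T^3 \)-action and the multi-moment map decouple the \( T^2\times\bR \) factor from \( \bC^2 \), reducing the statement to the standard hyperK\"ahler quotient of \( \bH \) by a circle—so the argument runs parallel to that for Proposition~\ref{prop:hom-flat-2d-stab}, but is simpler.

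First I would record the orbit-space structure. The first \( T^2 \) acts freely by translations on the torus factor, so passing to the \( T^2 \)-quotient collapses that torus and leaves \( \bR\times\bC^2 \) with coordinates \( u \) and \( (z,w) \); quotienting by the remaining circle \( S^1\leqslant\SU(2) \) acting on \( \bC^2=\bH \) yields \( \bR\times(\bH/S^1) \), as in the statement. Since \( \nu_3=u \) is \( S^1 \)-invariant and equals the projection to the \( \bR \)-factor, while \( (\nu_1,\nu_2,\mu)=(\re(zw),\im(zw),\tfrac12(\abs{z}^2-\abs{w}^2)) \) depends only on \( (z,w) \) and is \( S^1 \)-invariant, the induced map factors as \( \mathrm{id}_\bR\times\rho \) with \( \rho\colon\bH/S^1\to\bR^3 \) the descent of \( (\nu_1,\nu_2,\mu) \). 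It therefore suffices to prove that \( \rho \) is a homeomorphism.

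Next I would check that \( \rho \) is a continuous bijection by inverting the moment-map relations explicitly. The identity \( \mu^2+\nu_1^2+\nu_2^2=\tfrac14(\abs{z}^2+\abs{w}^2)^2 \) recorded above gives \( \tfrac12(\abs{z}^2+\abs{w}^2)=\sqrt{\mu^2+\nu_1^2+\nu_2^2} \), and combining this with \( \mu=\tfrac12(\abs{z}^2-\abs{w}^2) \) yields \( \abs{z}^2=\sqrt{\mu^2+\nu_1^2+\nu_2^2}+\mu \) and \( \abs{w}^2=\sqrt{\mu^2+\nu_1^2+\nu_2^2}-\mu \), both non-negative. One then checks \( \abs{z}^2\abs{w}^2=\nu_1^2+\nu_2^2=\abs{zw}^2 \), so a preimage exists with \( zw=\nu_1+i\nu_2 \), and since the \( S^1 \)-action only rotates the common phase of \( (z,w) \) while fixing \( \abs{z}^2 \), \( \abs{w}^2 \) and \( \arg(zw) \), any two preimages lie on a single orbit. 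Hence \( \rho \) is a continuous bijection onto \( \bR^3 \).

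Finally I would promote this to a homeomorphism exactly as in Proposition~\ref{prop:hom-flat-2d-stab}: the \( S^1 \)-action is free on \( \bH\setminus\{0\} \) and restricts to the Hopf action on each distance sphere, so \( S^3/S^1=S^2 \) and \( \bH/S^1 \) is the open cone on \( S^2 \), i.e.\ homeomorphic to \( \bR^3 \). With source and target both \( \bR^3 \), Brouwer's invariance of domain \cite[Thm.~7.12]{Madsen-T:deRham} upgrades \( \rho \) to a homeomorphism, and then \( \mathrm{id}_\bR\times\rho\colon\bR\times\bH/S^1\to\bR^4 \) is one as well. I expect the only genuinely delicate point to be this last step—as in the two-dimensional-stabiliser case—since the explicit inversion and bijectivity are routine, whereas concluding topological equivalence relies on the cone description of \( \bH/S^1 \) together with invariance of domain.
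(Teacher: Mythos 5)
Your proof is correct and follows essentially the same route as the paper: both reduce to the observation that \( (\mu,\nu_1,\nu_2) \) is the standard hyperK\"ahler moment map for \( S^1 \) acting on \( \bH \), i.e.\ the (scaled) Hopf fibration, so that \( \bH/S^1 \) is the cone on \( S^3/S^1 = S^2 \) and the induced map to \( \bR^3 \) is a bijection. The only cosmetic difference is in the last step: the paper concludes directly from the fact that distance \( 3 \)-spheres of radius \( r \) map homeomorphically onto \( 2 \)-spheres of radius \( r^2/2 \), whereas you verify bijectivity by explicit inversion and then invoke invariance of domain, mirroring the paper's proof of Proposition~\ref{prop:hom-flat-2d-stab}.
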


\subsection{Comparing with the flat models}
\label{sec:flat-mod-comp}

We now turn to general toric \( \G_2 \)-manifolds \( (M,\varphi) \).
One way of obtaining a first feel for the behaviour of the
multi-moment maps near singular stabilisers is by comparing with the
flat models.
In order to do so, it turns out useful to recall some basic facts
about Killing fields.

\subsubsection{Killing vector fields}
\label{sec:kill-vect-fields}

If a vector field \( X \) on \( (M,g) \) is Killing, then this implies
that \( \nabla X \) is skew-adjoint, normalises the holonomy algebra
and
\begin{equation*}
  \nabla^2_{A,B} X = -R_{X,A}B.
\end{equation*}
For the last result, cf.~\cite{Kobayashi:transformation} (see
also~\cite{Besse:Einstein}), we use that \( X \) preserves the
Levi-Civita connection,
\begin{equation}
  \label{eq:Killing-LC}
  [X,\nabla_A B]
  = \nabla_{[X,A]}B + \nabla_A[X,B]
  = \nabla_{[X,A]}B + \nabla_A\nabla_X B - \nabla_A\nabla_B X
\end{equation}
to get
\begin{equation*}
  \begin{split}
    R_{X,A}B
    &= \nabla_X\nabla_A B - \nabla_A\nabla_X B - \nabla_{[X,A]}B
      = \nabla_X\nabla_A B - [X,\nabla_A B] - \nabla_A\nabla_B X \\
    &= \nabla_{\nabla_A B}X - \nabla_A\nabla_B X
      = - \nabla^2_{A,B}X.
  \end{split}
\end{equation*}

It follows that at a zero \( p \) of \( X \), we have
\( (\nabla^2X)_p = 0 \) and
\begin{equation*}
  \begin{split}
    (\nabla^3_{A,B,C}X)_p
    &= (-(\nabla_A(R_X))_B C)_p
      = (- (\nabla_A R)_{X,B}C - R_{\nabla_A X,B}C)_p \\
    &= -(R_{\nabla_A X,B}C)_p.
  \end{split}
\end{equation*}
Note also that at such a \( p \), the endomorphism \( (\nabla X)_p \)
on \( T_p M \) gives the infinitesimal action of the one-parameter
group generated by~\( X \).

If \( X \) and \( Y \) are two commuting Killing vector fields with
\( X_p = 0 \), then we claim that the endomorphisms \( \nabla X \) and
\( \nabla Y \) commute at~\( p \).
To see this, let \( A \) be an arbitrary vector field.
Then at \( p \), we have \( \nabla_X\any = 0 \), so
using~\eqref{eq:Killing-LC} gives
\begin{equation*}
  \begin{split}
    [\nabla X,\nabla Y]_{p}(A)
    &= (\nabla_{\nabla_A Y}X - \nabla_{\nabla_A X}Y)_{p}
      = ([\nabla_A Y,X] - \nabla_{\nabla_A X}Y)_{p} \\
    &= (\nabla_{[A,X]}Y + \nabla_A[Y,X] - \nabla_{\nabla_A X}Y)_{p}
      = (\nabla_{\nabla_X A}Y)_{p} = 0,
  \end{split}
\end{equation*}
as claimed.

Finally, for a vector field \( X \) preserving \( \varphi \), we get
that \( X \) is Killing and
\begin{equation*}
  \begin{split}
    0
    &= L_X\varphi
      = d(X\hook \varphi)
      = \mathbf a \varphi(\nabla X,\any,\any) \\
    &= \varphi(\nabla X,\any,\any)
      + \varphi(\any,\nabla X,\any)
      + \varphi(\any,\any,\nabla X),
  \end{split}
\end{equation*}
which shows that \( \nabla X \in \lie g_2 \).

\subsubsection{Near points with two-dimensional stabiliser}
\label{sec:near-points-with}

Let \( p \in M \) be a point with \( \Stab_{T^3}(p) \cong T^2 \).
We may identify \( T_p M \) linearly with
\( \bR \times \bC^3 = T_{(1,0)}(S^1\times \bC^3) \) in the standard
model of~\S\ref{sec:two-dimens-stab}, so that the \( \G_2 \)-forms
agree at this point.
We have an equivariant diffeomorphism between a neighbourhood of
\( 0 \in T_p M \) and a neighbourhood of \( p\in M \) via the local
tubular model
\( T^3 \times_{\Stab(p)} \bC^3 \cong T^3/T^2 \times \bC^3 \), the map
on the \( \bC^3 \) part being given by the Riemannian exponential map.
The elements of \( \Stab(p) \) act on \( \bR \times \bC^3 \) linearly
as a maximal torus in \( \SU(3) \).
We may choose our linear identification so this is the standard
diagonal subgroup and may choose our generators \( U_2 \), \( U_3 \)
for \( \Stab(p) \) so that
\begin{equation*}
  (\nabla U_2)_p = \diag(i,0,-i),\quad (\nabla U_3)_p = \diag(0,i,-i)
\end{equation*}
in this model.

Let us now specify a choice of \( U_1 \).
We note that the \( T^3 \)-orbit of \( p \) is
\( T^3/\Stab(p) \times \Set{0} \) in the local model.
This orbit is the fixed point set of \( \Stab(p) \), so is totally
geodesic.
For any \( U \) generating \( T^3/\Stab(p) \), we thus have
\( (\nabla_U U)_p \in \bR U \).
But \( (\nabla U)_p \) is an element of~\( \lie g_2 \subset \so(7) \),
so \( (\nabla_U U)_p = 0 \).
As the splitting \( \bR \times \bC^3 \) is orthogonal, it follows that
\( (\nabla U)_p \in \su(3) \).
Now each \( U_i \) vanishes at~\( p \), so the endomorphisms
\( (\nabla U_i)_p \) commute with \( (\nabla U)_p \),
by~\S\ref{sec:kill-vect-fields}.
As \( (\nabla U_2)_p \), \( (\nabla U_3)_p \) generate a maximal torus
of \( \su(3) \), it follows that
\( (\nabla U)_p = a(\nabla U_2)_p + b(\nabla U_3)_p \), for some
\( a,b \in \bR \).
Putting \( U_1 = U - aU_2 - bU_3 \), we still have that \( U_1 \)
generates \( T^3/\Stab(p) \) and get \( (\nabla U_1)_p = 0 \).
If we wish, we may assume that \( (U_1)_p \) is of length~\( 1 \).

Now consider the multi-moment maps.  For \( \nu_2 \), we have
\begin{equation*}
  (\nabla\nu_2)_p
  = (d\nu_2)_p
  = (U_3 \wedge U_1 \hook \varphi)_p = 0,
\end{equation*}
since \( (U_3)_p = 0 \).
Similarly \( \nabla\nu_3 = 0 = \nabla\nu_1 = \nabla\mu \) at \( p \).
Furthermore,
\begin{equation*}
  \begin{split}
    (\nabla^2\nu_2)_p
    &= \bigl( (\nabla\varphi)(U_3,U_1,\any) + \varphi(\nabla U_3, U_1,
      \any) + \varphi(U_3, \nabla U_1, \any) \bigr)_p \\
    &= \varphi(\nabla U_3, U_1, \any)_p
  \end{split}
\end{equation*}
agrees with the flat model at~\( p \).
Similarly for \( (\nabla^2\nu_3)_p \).  For \( \nu_1 \), we have
\begin{equation*}
  (\nabla^2\nu_1)_p
  = \bigl( \varphi(\nabla U_2, U_3, \any) + \varphi(U_2, \nabla U_3,
  \any) \bigr)_p
  = 0,
\end{equation*}
as both \( U_2 \) and \( U_3 \) vanish at~\( p \).
Similarly, \( (\nabla^2\mu)_p=0 \).

For third order derivatives, we have
\begin{equation*}
  (\nabla^3\nu_2)_p
  = \bigl(\varphi(\nabla^2 U_3, U_1,\any) + 2\varphi(\nabla U_3,
  \nabla U_1, \any) + \varphi(U_3, \nabla^2 U_1, \any) \bigr)_p
  = 0,
\end{equation*}
since \( (\nabla^2U_3)_p = 0 \) by~\S\ref{sec:kill-vect-fields}, and
\( (\nabla U_1)_p=0 \) by our choice of \( U_1 \).
Similarly, \( (\nabla^3\nu_3)_p=0 \).  On the other hand,
\begin{equation*}
  \begin{split}
    (\nabla^3\nu_1)_p
    &= \bigl(\varphi(\nabla^2 U_2, U_3,\any) + 2\varphi(\nabla U_2,
      \nabla U_3, \any) + \varphi(U_2, \nabla^2 U_3, \any) \bigr)_p \\
    &= 2\varphi(\nabla U_2, \nabla U_3, \any)_p,
  \end{split}
\end{equation*}
which agrees with the flat model, as does \( (\nabla^3\mu)_p \).

Let us now compute fourth order derivatives.  Firstly,
\begin{equation*}
  \begin{split}
    (\nabla^4\nu_2)_p
    &= \bigl(\varphi(\nabla^3 U_3, U_1,\any)
      + 3\varphi(\nabla^2 U_3, \nabla U_1, \any)
      \eqbreak
      + 3\varphi(\nabla U_3, \nabla^2 U_1, \any)
      + \varphi(U_3, \nabla^3 U_1, \any) \bigr)_p \\
    &= \varphi(\nabla^3 U_3, U_1,\any)_p
      + 3\varphi(\nabla U_3, \nabla^2 U_1, \any)_p \\
    &= -\varphi(R_{\nabla U_3,\any}\any, U_1, \any)_p - 3\varphi(\nabla
      U_3, R_{U_1,\any}\any, \any)_p,
  \end{split}
\end{equation*}
with a similar expression for \( (\nabla^4\nu_3)_p \).
For \( \nu_1 \) and \( \mu \), the same type of computation gives
\( (\nabla^4\nu_1)_p = 0 = (\nabla^4\mu)_p \).
In conclusion, we have shown:

\begin{lemma}
  Let \( p\in M \) be a point with stabiliser \( T^2\) whose
  infinitesimal generators are \( U_2,U_3 \).
  Then the multi-moment maps \( \nu_2,\nu_3 \) agree with the flat
  model to order \( 3 \) and \( \nu_1,\mu \) agree with the flat model
  to order \( 4 \).  \qed
\end{lemma}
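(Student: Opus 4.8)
The plan is to compare the two Taylor expansions at $p$ by computing iterated covariant derivatives of the multi-moment maps. Since $(M,\varphi)$ is torsion-free, $\varphi$ is parallel, so differentiating the defining relations $d\nu_1=U_2\wedge U_3\hook\varphi$, $d\nu_2=U_3\wedge U_1\hook\varphi$, $d\nu_3=U_1\wedge U_2\hook\varphi$ and $d\mu=U_1\wedge U_2\wedge U_3\hook\Hodge\varphi$ via the Leibniz rule reduces every $(\nabla^k\nu_i)_p$ and $(\nabla^k\mu)_p$ to an expression built purely from the $(\nabla^j U_a)_p$. The preliminary task is therefore to fix the generators so that their low-order covariant derivatives at $p$ coincide with those of the model of \S\ref{sec:two-dimens-stab}.

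For the stabiliser generators $U_2,U_3$—which vanish at $p$—I would arrange $(\nabla U_2)_p=\diag(i,0,-i)$ and $(\nabla U_3)_p=\diag(0,i,-i)$, the standard generators of a maximal torus of $\su(3)$. The delicate choice is $U_1$: I want $(\nabla U_1)_p=0$. Starting from any orbit generator $U$, the orbit $T^3\cdot p$ is the fixed-point set of $\Stab(p)\cong T^2$, hence totally geodesic, forcing $(\nabla_U U)_p\in\bR U$; since $(\nabla U)_p\in\lie g_2\subset\so(7)$ is skew this yields $(\nabla_U U)_p=0$ and, using the orthogonal splitting $\bR\times\bC^3$, that $(\nabla U)_p\in\su(3)$. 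Because $U_2,U_3$ vanish at $p$, the commuting identity of \S\ref{sec:kill-vect-fields} makes $(\nabla U_2)_p,(\nabla U_3)_p$ commute with $(\nabla U)_p$; as these two span a maximal torus, $(\nabla U)_p=a(\nabla U_2)_p+b(\nabla U_3)_p$ for some $a,b\in\bR$, and replacing $U$ by $U_1=U-aU_2-bU_3$ gives $(\nabla U_1)_p=0$.

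With these generators fixed, I would differentiate the defining relations repeatedly, using $\nabla\varphi=0$ together with the Killing identities $(\nabla^2 U_a)_p=0$ (valid at a zero) and $(\nabla^3_{A,B,C}U_a)_p=-(R_{\nabla_A U_a,B}C)_p$ from \S\ref{sec:kill-vect-fields}. For $\nu_2$ (and symmetrically $\nu_3$) one factor is $U_3$, which vanishes at $p$, while separately $(\nabla U_1)_p=0$: the value and first derivative vanish, the second derivative is $\varphi(\nabla U_3,U_1,\any)_p$ and matches the flat model, the third derivative vanishes, and only at fourth order do genuine curvature terms such as $\varphi(R_{\nabla U_3,\any}\any,U_1,\any)_p$ enter. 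For $\nu_1$ (and likewise $\mu$) both surviving factors $U_2,U_3$ vanish at $p$, so their second derivatives vanish there as well; this pushes the first non-zero derivative to third order, where it equals $2\varphi(\nabla U_2,\nabla U_3,\any)_p$ and again matches the flat model, and at fourth order every term is killed, giving $(\nabla^4\nu_1)_p=0=(\nabla^4\mu)_p$.

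The main obstacle is the fourth-order bookkeeping, and in particular keeping straight that $U_1$ does \emph{not} vanish at $p$. Consequently, unlike $(\nabla^2 U_2)_p=(\nabla^2 U_3)_p=0$, the second derivative $(\nabla^2 U_1)_p=-(R_{U_1,\any}\any)_p$ is a genuine curvature term; it is precisely this that lets curvature enter $(\nabla^4\nu_2)_p$. For $\nu_1$ and $\mu$, by contrast, in the third covariant derivative of $\varphi(U_2,U_3,\any)$ the two factors receive $\nabla^a U_2,\nabla^b U_3$ with $a+b=3$, so they cannot both carry an odd number of derivatives; hence at least one factor is a bare $U_a$ or a $\nabla^2 U_a$, both of which vanish at $p$, and every term dies. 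Verifying this parity obstruction is the step that needs the most care, and it is exactly what yields agreement to order $3$ for $\nu_2,\nu_3$ and to order $4$ for $\nu_1,\mu$. Since the flat model is curvature-free, equality of all the surviving covariant derivatives with the flat values is the same as the claimed order of agreement of the Taylor expansions.
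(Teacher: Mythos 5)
Your proposal is correct and follows essentially the same route as the paper: the same normalisation of the generators (in particular arranging \( (\nabla U_1)_p = 0 \) via the totally geodesic orbit and the commuting-endomorphism argument), followed by the same order-by-order covariant differentiation of the defining relations using \( \nabla\varphi = 0 \) and the Killing identities at a zero. Your explicit parity observation for why every term of \( (\nabla^4\nu_1)_p \) and \( (\nabla^4\mu)_p \) dies is a slightly cleaner packaging of what the paper dispatches with \enquote{the same type of computation}, but it is the identical argument.
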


\subsubsection{Near points with one-dimensional stabiliser}
\label{sec:near-points-1d}

In this case, we need less detailed information.
Let \( p \in M \) have \( \Stab_{T^3}(p) \cong S^1 \).
We take the infinitesimal generator for this stabiliser to
be~\( U_3 \).
Let \( U_1 \) and \( U_2 \) be two vector fields of the \( T^3 \)
action that generate the quotient \( T^3/\Stab(p) \cong T^2 \).
We take them to be of unit length and orthogonal at~\( p \).
Then \( U_1 \) and \( U_2 \) are invariant under \( U_3 \) as is their
\( \G_2 \)-cross-product
\( U_1 \times U_2 = \varphi(U_1,U_2,\any)^\sharp \).
We have \( T_p M = \bR^3 \times \bC^2 \) linearly, with
\( \bR^3 = \Span{U_1,U_2,U_1\times U_2}_p \) and \( \bC^2 \) the
orthogonal complement.
This identification may be chosen so that \( (\nabla U_3)_p \) acts as
the element \( \diag(i,-i) \) in \( \su(2) \) on~\( \bC^2 \).
The local model is
\( T^3 \times_{\Stab(p)} (\bR \times \bC^2) \cong (T^2 \times \bR)
\times \bC^2 \), with \( T^2 \times \bR \times \Set{0} \) the fixed
point set of~\( U_3 \), so totally geodesic.
Now \( d\nu_3 = (U_1\times U_2)^\flat \) is non-zero and therefore
provides a transverse coordinate to a six-dimensional level set, and
\( d\nu_1 = 0 = d\nu_2 = d\mu \) are zero at~\( p \).
The three second derivatives \( \nabla^2\nu_1 \), \( \nabla^2\nu_2 \)
and \( \nabla^2\mu \) are specified by \( U_i \), \( i=1,2 \), and
\( \nabla U_3 \) at~\( p \) and so all agree with the standard flat
model at~\( p \).

\subsubsection{Images of singular orbits}

First consider a point \( p \) with stabiliser~\( S^1 \).
The previous section provides an integral basis \( U_1,U_2,U_3 \) of
\( \lie t^3 \) with \( (U_3)_p = 0 \).
Furthermore, this is true for all points of \( T^2 \times \bR \) in
the local model.
It follows that \( \nu_1 \), \( \nu_2 \) and \( \mu \) are constant on
this set, and so the image under \( (\nu,\mu) \) of this family of
singular orbits is a straight line parameterised by the values
of~\( \nu_3 \).

Now for points \( p \) with \( T^2 \)-stabiliser, these lie on a
circle \( T^3p \).
The normal bundle is modelled on \( \bC^3 \) and there are three
families of points with stabiliser \( S^1 \).
These families meet at \( p \) and correspond to the complex
coordinate axes in \( \bC^3 \).
There is thus an integral basis \( U_1,U_2,U_3 \) of \( \lie t^3 \)
with \( U_2 = 0 = U_3 \) at \( p \) and such that \( U_2 \), \( U_3 \)
and \( - U_2 - U_3 \) generate the \( S^1 \) stabilisers of the three
families.
The images of the families under \( (\nu,\mu) \) all have the same
constant \( \mu \)- and \( \nu_1 \)-coordinates, and provide the three
half-lines meeting at the image of \( p \) lying in \( \nu_3 \),
\( \nu_2 \) or \( (\nu_2-\nu_3) \) constant.

Summarising, we have:

\begin{lemma}
  \label{lem:sing-point}
  For \( p\in M\setminus M_0 \), we have \( \rank B_p \leqslant 2 \).
  The image in \( M/T^3 \) of the union \( M\setminus M_0 \) of
  singular orbits consists of trivalent graphs lying in sets
  \( \mu = \text{constant} \) with edges that are straight lines of
  rational slope in the \( \nu \)-coordinates.  At each vertex the three
  primitive integral slope vectors sum to zero, in particular these
  edges lie a plane.  \qed
\end{lemma}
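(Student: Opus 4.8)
The plan is to obtain the rank statement and the constancy of \( \mu \) from contraction identities, to extract straightness and the integral slope of each edge from the defining property of the multi-moment map, and finally to read off the trivalent vertex condition from the flat slice models. By Lemma~\ref{lem:conn-isotr} a point \( p\in M\setminus M_0 \) has isotropy \( S^1 \) or \( T^2 \), so \( \dim\xi_p\in\{1,2\} \); since \( B_p \) is the Gram matrix of the generators at \( p \), we get \( \rank B_p=\dim\xi_p\leqslant 2 \). For the constancy of \( \mu \) I would use \( d\mu=U_1\wedge U_2\wedge U_3\hook\Hodge\varphi=\xi(U_1)\wedge\xi(U_2)\wedge\xi(U_3)\hook\Hodge\varphi \): a non-trivial stabiliser makes \( \xi(U_1),\xi(U_2),\xi(U_3) \) linearly dependent at \( p \), so this trivector vanishes and \( d\mu|_p=0 \). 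Thus \( \mu \) is constant on each connected component of \( M\setminus M_0 \), placing each component of the graph in a single level set \( \{\mu=\text{const}\} \).

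The heart of the matter is the shape of the image of a singular family. Suppose a primitive integral \( \eta\in\lt^3 \) generates the \( S^1 \)-stabiliser along such a family. Then \( \xi(\eta)=0 \) on the family, hence \( \xi(\eta\wedge\zeta)=\xi(\eta)\wedge\xi(\zeta)=0 \), and the defining relation \( d\inp{\nu}{\eta\wedge\zeta}=\xi(\eta\wedge\zeta)\hook\varphi \) forces \( \inp{\nu}{\eta\wedge\zeta} \) to be constant along the family for every \( \zeta\in\lt^3 \). As \( \zeta \) runs over a complement of \( \eta \), this is a pair of independent affine conditions on \( \nu \), so \( \nu \) is confined to a line; together with \( \mu \) constant, the image is a straight segment. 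Under the identification \( \Lambda^2(\lt^3)^*\cong\lt^3 \) by the volume form (as used in~\S\ref{sec:eff-T3}), the direction of this line is exactly \( \eta \), so the slope is the primitive integral vector \( \eta \), in particular rational.

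It remains to describe how these edges assemble, and here I would invoke the flat slice models. Near a point with \( T^2 \)-stabiliser the slice representation is \( \bC^3 \) with \( T^2 \) acting as a maximal torus of \( \SU(3) \) (\S\ref{sec:two-dimens-stab}); the singular locus of the slice is exactly the three complex coordinate axes, each an \( S^1 \)-family whose stabiliser is the isotropy circle of one weight line. By the previous paragraph these three families map to three edges meeting at the image of \( p \), so the vertex is trivalent. Because the slice is special unitary, the three isotropy circles admit primitive generators, oriented outwards from the vertex, that sum to zero; since these generators are precisely the slope vectors by the previous paragraph, the three slopes sum to zero, and being linearly dependent they span at most a plane, so the edges are coplanar. (The explicit multi-moment maps of~\S\ref{sec:two-dimens-stab} exhibit this directly.) Near a point with \( S^1 \)-stabiliser the slice is \( \bR\times\bC^2 \) with the circle acting as a maximal torus of \( \SU(2) \) (\S\ref{sec:one-dimens-stab}), the singular locus is a single \( \bR \)-family, and one obtains an edge with no vertex. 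Assembling these local pictures yields the asserted trivalent graph.

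The main obstacle is the passage from the flat slice models to honest global edges: via the equivariant tubular-neighbourhood identification one must check that the exact singular loci correspond to the coordinate strata of the slice and that the induced integral generators are the claimed ones, so that the three families genuinely emanate from each \( T^2 \)-point. The accompanying orientation bookkeeping---pinning the signs that make the three slope vectors sum to zero rather than merely be linearly dependent---is the fiddly part; once it is settled, the identity \( \inp{\nu}{\eta\wedge\zeta}=\text{const} \) upgrades each local stratum to a genuine straight segment of the stated integral slope.
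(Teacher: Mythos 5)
Your proposal is correct and follows essentially the same route as the paper: the paper also deduces constancy of \( \mu,\nu_i \) along a singular family from the vanishing of the relevant contractions with a generator of the stabiliser (it phrases this by adapting an integral basis \( U_1,U_2,U_3 \) with \( U_3=0 \) along the family, rather than via your invariant pairing \( \inp{\nu}{\eta\wedge\zeta} \)), and it reads off the trivalent vertex and the sum-zero/coplanarity of the three primitive slopes from the \( \bC^3 \) slice model with its three coordinate-axis families of \( S^1 \)-stabilisers, exactly as you do. The sign bookkeeping you flag is indeed settled by the explicit multi-moment maps of the flat model in \S\ref{sec:two-dimens-stab}, which is also how the paper handles it.
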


\subsection{Deforming to the flat model}
\label{sec:deforming-flat-model}

Let \( \varphi \) be a torsion-free \( \G_2 \)-structure on the ball
\( B_2(0) \subset \bR^7 \) with centre~\( 0 \) and radius~\( 2 \).
Choose linear coordinates \( (x_1,\dots,x_7) \) on \( \bR^7 \) so that
\( \varphi|_0 = \varphi_0|_0 \), where \( \varphi_0 \) is the standard
constant coefficient \( \G_2 \)-form.
Our aim is to construct a family of torsion-free \( \G_2 \)-structures
\( \varphi_t \), \( t\in\halfclosed{0,1} \), with
\( \varphi_1 = \varphi \), and with \( \varphi_t \) converging to
\( \varphi_0 \) on \( \overline{B_1(0)} \) in each \( C^k \)-norm.

For \( t \in \halfclosed{0,1} \), define a linear diffeomorphism
\( \lambda_t\colon \bR^7 \to \bR^7 \) by \( \lambda_t(x) = tx \).
Note that \( \lambda_t^*\varphi_0 = t^3\varphi_0 \), so let us take
\( \varphi_t \) to be
\begin{equation*}
  \varphi_t = t^{-3}\lambda_t^*\varphi,
  \eqcond{for \( t\in\halfclosed{0,1} \).}
\end{equation*}

We have \( \varphi = \varphi_0 + \psi \) where
\( \psi \in \Omega^3(B_2(0)) \) is smooth and has \( \psi|_0 = 0 \).
It follows that
\begin{equation*}
  \psi = \sum_{\abs{I}=3} f_I dx_I,
\end{equation*}
where \( dx_I = dx_{i_1}\wedge dx_{i_2} \wedge dx_{i_3} \), for
\( I = (i_1,i_2,i_3) \in \Set{1,\dots,7}^3 \), and \( f_I \) is smooth
with \( f_I(0) = 0 \).
We may therefore write \( f_I(x) = \sum_{k=1}^7 x_k h_{I,k}(x) \) with
\( h_{I,k} \) smooth.
We have \( \lambda_t^*\psi = \sum_I (\lambda_t^*f_I) t^3dx_I \) and
\( (\lambda_t^*f_I)(x) = \sum_k tx_k h_{I,k}(tx) \), so
\( \norm{\lambda_t^*f_I}_{C^0} \leqslant t \norm{f_I}_{C^0} \).
Thus putting \( \psi_t = t^{-3}\lambda_t^*\psi \), so
\( \varphi_t = \varphi_0 + \psi_t \), we get
\( \norm{\psi_t}_{C^0} \leqslant t\norm{\psi}_{C^0} \).
Thus \( \varphi_t \to \varphi_0 \) in~\( C^0(\overline{B_1(0)}) \) as
\( t \searrow 0 \).

The Riemannian metric \( g_t \) defined by \( \varphi_t \) satisfies
\begin{equation*}
  g_t = t^{-2}\lambda_t^*g,
\end{equation*}
where \( g = g_1 \).
The same types of computations as above show that
\( g_t \to g_0 = \sum_{i=1}^7 dx_i^2 \) in \( C^0 \) as
\( t \searrow 0 \).
Let \( \nabla^t \) be the Levi-Civita connection of~\( g_t \) and
write its Christoffel symbols as~\( (\Gamma_t)_{ij}^k \).
We claim that \( \nabla^t \to \nabla^0 \), meaning that
\( (\Gamma_t)_{ij}^k \to 0 \), as \( t \searrow 0 \).

We have
\begin{equation*}
  (g_t)_{ij}(x) = \delta_{ij} + t \sum_{k=1}^7 x_k h_{ijk}(tx)
\end{equation*}
for some smooth functions~\( h_{ijk} \).  Thus
\begin{equation*}
  \D{}{x_\ell}(g_t)_{ij}(x) = t h_{ij\ell}(tx) + t^2 \sum_{k=1}^7 x_k
  \D{h_{ijk}}{x_\ell}(tx)
\end{equation*}
and
\begin{equation*}
  (g_t^{-1})_{ij}(x) = \delta_{ij} + t\sum_{k=1}^7 x_k\tilde h_{ijk}(tx),
\end{equation*}
for some smooth functions \( \tilde h_{ijk} \). This gives
\begin{equation*}
  2(\Gamma_t)_{ij}^k(x)
  = t(h_{ij\ell} + h_{ji\ell} - h_{\ell ij})(tx) + O(t^2)
\end{equation*}
and hence \( (\Gamma_t)_{ij}^k \to 0 \), as claimed.

Now note that
\( 0 = \nabla^t\varphi_t = \nabla^t\varphi_0 + \nabla^t\psi_t, \) so
\( \nabla^t\psi_t = - \nabla^t\varphi_0 \to 0 \) in \( C^0 \) as
\( t \searrow 0 \).
It follows that \( \varphi_t \to \varphi_0 \) in \( C^1 \).
Iterating, noting that each derivative adds an extra factor
of~\( t \), we get the claimed convergence in \( C^k \).

If \( U \) is a linear symmetry of \( \bR^7 \) that preserves
\( \varphi \), then it is also a symmetry of \( \varphi_t \), since
\( U \) commutes with dilations.
Furthermore, if \( X = \sum_{i=1}^7 v_i\partial/\partial x_i \) is a
constant coefficient vector field preserving \( \varphi \) then it is
also a symmetry of \( \varphi_t \).
Indeed, the one-parameter group generated by \( X \) is
\( T_s(x) = x + sv \).
Now, for any \( f\in C^\infty(V) \) we have \( (\lambda_t)_*X = tX \).
This gives
\begin{equation*}
  \begin{split}
    L_X\varphi_t
    &= t^{-3} L_X \lambda_t^*\varphi
      = t^{-3} \bigl( X \hook d\lambda_t^*\varphi +
      d(X\hook\lambda_t^*\varphi)\bigr) \\
    &= t^{-2} \lambda_t^* L_X\varphi
      = 0,
  \end{split}
\end{equation*}
which is the claimed symmetry.

Note that we now also get that the multi-moment maps converge to those
of flat space as \( t \searrow 0 \).

\subsection{Identifications of the quotients}
\label{sec:ident-quot}

Consider a compact group \( \G \) acting linearly on a
finite-dimensional vector space~\( V \).
A main result of~\cite{Schwarz:smooth-Lie},
cf.~\cite{Mather:invariants}, is that any smooth \( \G \)-invariant
function is necessarily a smooth function of any set of generators for
the ring of \( \G \)-invariant polynomials on~\( V \).
Suppose \( \sigma_1,\dots,\sigma_k \) is a minimal set of such
polynomial generators, meaning that no subset generates.
Then the statement gives that \( \sigma \) induces a diffeomorphism of
\( V/\G \) with \( \sigma(V) \subset \bR^k \) with respect to the
\enquote{smooth structures}: a function on \( V/\G \) is smooth if its
pull-back to \( V \) is smooth; a function on \( \sigma(V) \) is
smooth if it has local extensions to smooth functions in open
\( \bR^k \)-neighbourhoods of each point.

In our cases we are interested in two models:
\begin{enumerate}
\item\label{item:4d} \( \G = S^1 \) acting on \( V = \bR^4 = \bC^2 \)
  as a maximal torus in \( \SU(2) \), and
\item\label{item:6d} \( \G = T^2 \) action on \( V = \bR^6 = \bC^3 \)
  as a maximal torus in \( \SU(3) \).
\end{enumerate}

Let us consider each of these in turn.
For~\ref{item:4d}, let \( (z,w) \) be standard complex coordinates.
Then \( S^1 \) acts as
\( e^{i\theta}(z,w) = (e^{i\theta}z,e^{-i\theta}w) \).
The invariant polynomials are generated by
\( (\sigma_1,\dots,\sigma_4) \):
\begin{gather*}
  \sigma_1 + i\sigma_2 = zw, \quad \sigma_3 = \tfrac12(\abs z^2 - \abs
  w^2), \quad \sigma_4 = \tfrac12(\abs z^2 + \abs w^2).
\end{gather*}
Note that these satisfy the relations
\begin{equation}
  \label{eq:Hopf-graph}
  \sigma_4 \geqslant 0,\qquad
  \sigma_1^2 + \sigma_2^2 + \sigma_3^2 = \sigma_4^2.
\end{equation}

For~\ref{item:6d}, write \( (z_1,z_2,z_3) \) for the standard
coordinates in the flat model, as above.
This time the ring of polynomial invariants is generated by five
elements
\begin{gather*}
  \sigma_1 + i\sigma_2 = - \overline{z_1z_2z_3}, \quad \sigma_3 =
  \tfrac12(\abs{z_2}^2 - \abs{z_3}^2), \quad
  \sigma_4 = \tfrac12(\abs{z_3}^2 - \abs{z_1}^2), \\
  \sigma_5 = \abs{z_3}^2,
\end{gather*}
satisfying the relations
\begin{equation}
  \label{eq:6d-graph}
  \sigma_5 \geqslant \max\Set{0,-2\sigma_3,2\sigma_4}, \qquad
  \sigma_1^2 + \sigma_2^2
  = \sigma_5(\sigma_5 + 2\sigma_3)(\sigma_5 - 2\sigma_4).
\end{equation}

We have chosen our generators in such a way that
\( \sigma_1,\dots,\sigma_{k-1} \) correspond to the relevant
multi-moment maps in the flat models.
Our work in~\S\ref{sec:flat-models} on the flat models shows that in
both cases the map \( \sigma(V) \to \bR^{k-1} \) given by
\( (\sigma_1,\dots,\sigma_{k-1},\sigma_k) \mapsto
(\sigma_1,\dots,\sigma_{k-1}) \) is a homeomorphism.
For the non-flat cases, we have the multi-moment maps giving us
invariant functions that agree with \( \sigma_1,\dots,\sigma_{k-1} \)
to certain orders.
As Schwarz gives that \( V/\G \) is diffeomorphic to \( \sigma(V) \),
the aim is now to show that these still give homeomorphisms to
\( \sigma(V) \to \bR^{k-1} \).
For the case of one-dimensional stabilisers this is
what~\cite{Bielawski:tri-Hamiltonian} does, albeit in a hyperK\"ahler
context, but the local model is the same.
We discuss this briefly as preparation for the six-dimensional case.

For the four-dimensional model we may proceed as follows.
Let \( V \) denote the slice with its \( S^1 \) action.
Write \( \pi \colon V \to V/S^1 \) for the projection.
Use \( W = \bR^4 = U \times \bR \) with \( U = \bR^3 \).
Let \( F_0 \) be the linear projection \( W \to U \).
Let \( S = \sigma(V) \subset W \) be the semi-algebraic set given
by~\eqref{eq:Hopf-graph}.

On the four-dimensional slice~\( V \), we have (restrictions of) the
multi-moment map functions \( \nu_1 \), \( \nu_2 \) and \( \mu \).
Collect these into a single function
\( m = (\nu_1,\nu_2,\mu) \colon V \to \bR^3 \).
This is a smooth invariant function, so by Schwarz it is induced by a
smooth function on~\( S \).  Write
\begin{equation*}
  m = f \circ \sigma, \qquad f\colon S \to \bR^3.
\end{equation*}
Note that \( f \) smooth means it extends to a smooth function in a
neighbourhood of any given point; we use the same name for a choice of
such smooth extension in a neighbourhood of~\( 0 \in W \).

By~\S\ref{sec:near-points-1d}, we know that the first two covariant
derivatives at the origin of \( \nu_1 \), \( \nu_2 \) and \( \mu \)
agree with those of \( \sigma_1 \), \( \sigma_2 \) and \( \sigma_3 \),
respectively.
So \( m \) agrees with \( m_0 = (\sigma_1,\sigma_2,\sigma_3) \) to
order~\( 2 \) near the origin and \( f = F_0 + \tilde f \) with
\( \tilde f \) smooth.
In the slice coordinates at the origin, \( \tilde f \circ \sigma \)
vanishes to order~\( 2 \) and all the \( \sigma_i \) have
degree~\( 2 \), so \( \tilde f \) vanishes to order~\( 1 \)
in~\( \sigma \).  In other words
\begin{equation*}
  \tilde f(\sigma) = \sum_{i,j=1}^{4} \sigma_i\sigma_j f_{ij}(\sigma),
\end{equation*}
where each \( f_{ij} \) is smooth.
In particular, the derivative of \( \tilde f \) has norm bounded above
by \( c\norm\sigma \) on this neighbourhood and the mean value theorem
gives
\begin{equation}
  \label{eq:tfd-quad}
  \norm{\tilde f(x) - \tilde f(y)} \leqslant
  c(\norm{x}+\norm{y})\norm{x-y}.
\end{equation}

Consider points \( q_1 \) and \( q_2 \) in the slice near near the
fixed point \( p = 0 \).
Write \( x = \sigma(q_1) \), \( y = \sigma(q_2) \).  Then
\begin{equation}
  \label{eq:m-lower}
  \begin{split}
    \norm{m(q_1) - m(q_2)}
    &= \norm{f(x) - f(y)}
      = \norm{F_0(x) - F_0(y) + \tilde f(x) - \tilde f(y)} \\
    &\geqslant \norm{F_0(x) - F_0(y)} - c (\norm{x}+\norm{y}) \norm{x-y}.
  \end{split}
\end{equation}
But \( F_0^{-1}(a) = (a,\norm{a}) \in S \) and
\begin{equation*}
  \begin{split}
    \norm{x-y}
    &= \norm[\big]{(F_0(x),\norm{F_0(x)})-(F_0(y),\norm{F_0(y)})} \\
    &\leqslant 2\norm{F_0(x) - F_0(y)}
  \end{split}
\end{equation*}
gives
\begin{equation*}
  \begin{split}
    \norm{m(q_1) - m(q_2)}
    &\geqslant \tfrac12 \norm{x-y} - c (\norm{x}+\norm{y})
      \norm{x-y} \\
    &\geqslant \bigl(\tfrac12 - c(\norm{x}+\norm{y})\bigr)
      \norm{x-y}.
  \end{split}
\end{equation*}
So for \( \norm x, \norm y \leqslant 1/(8c) \), we have
\( \norm{m(q_1) - m(q_2)} \geqslant \norm{x-y}/4 \), proving that
\( m \) is injective on orbits in a neighbourhood of the origin.
Invoking Brouwer's invariance of domain, gives that \( m \) induces a
homeomorphism of the quotient space in a neighbourhood of the origin.

\smallbreak

Let us turn to the six-dimensional models.
Let \( V \) be the slice with its \( T^2 \) action and write
\( \pi \colon V \to V/T^2 \) for the projection map.
Let \( W = \bR^5 = U \times \bR \) with \( U = \bR^4 \) and write
\( F_0\colon W \to U \) for the linear projection.
The vector space \( W \) contains the semi-algebraic set
\( S = \sigma(V) \) given by~\eqref{eq:6d-graph}.
Write \( m = (\nu_1,\mu,\nu_2,\nu_3) \colon V \to \bR^4 \) for the
collection of multi-moment maps.
By Schwarz, \( m = f \circ \sigma \) for a smooth
\( f \colon S \to \bR^4 \).
On \( V \), the first four derivatives of \( \nu_1 \) and \( \mu \),
and the first three derivatives of \( \nu_2 \) and \( \nu_3 \), agree
with those of \( \sigma_1 \), \( \sigma_2 \), \( \sigma_3 \) and
\( \sigma_4 \), respectively.
Noting that any homogeneous polynomial in \( \sigma_i \) of
degree~\( 2 \) is at least of degree \( 4 \) in
the~\( z_i, \overline{z_i} \), we thus have \( f = F_0 + \tilde f \)
with
\begin{equation*}
  \tilde f(\sigma) = \sum_{i,j=1}^5 \sigma_i\sigma_j f_{ij}(\sigma)
\end{equation*}
for some smooth functions \( f_{ij} \).
This gives the estimates~\eqref{eq:tfd-quad} and~\eqref{eq:m-lower} on
some neighbourhood~\( S_0 \) of \( 0 \in S \).

Now consider points \( x = \sigma(q) \)
satisfying~\eqref{eq:6d-graph}.  To estimate \( x_5 \), note that
\begin{equation*}
  x_5(x_5+2x_3)(x_5-2x_4)
  \geqslant
  (x_5 - \max\Set{0,-2x_3,2x_4})^3
\end{equation*}
so, as \( x_5 \geqslant 0 \), we have
\begin{equation*}
  \begin{split}
    \abs{x_5}
    &\leqslant (x_1^2+x_2^2)^{1/3}
      + \max\Set{0,-2x_3,2x_4} \\
    &\leqslant (x_1^2+x_2^2)^{1/3}
      + 2(x_3^2+x_4^2)^{1/2}.
  \end{split}
\end{equation*}
For \( \norm{(x_1,x_2,x_3,x_4)} < 1 \), we have
\begin{equation*}
  \begin{split}
    \norm x
    &= \norm{(F_0(x),x_5)}
      \leqslant \norm{F_0(x)} + \abs{x_5} \\
    &\leqslant \norm{F_0(x)} + \norm{F_0(x)}^{2/3} + 2\norm{F_0(x)} \\
    &\leqslant \norm{F_0(x)}^{2/3}(3\norm{F_0(x)}^{1/3}+1)
      \leqslant 4 \norm{F_0(x)}^{2/3}.
  \end{split}
\end{equation*}
So on \( S_0 \cap B_1(0) \) this gives
\begin{equation*}
  \begin{split}
    \norm{m(q)}
    = \norm{f(x)}
    &\geqslant \norm{F_0(x)} - c \norm x^2 \\
    &\geqslant \Bigl(\tfrac14\norm x\Bigr)^{3/2} - c \norm x^2
      = \norm x^{3/2} \Bigl( \tfrac 18 - c\norm x^{1/2}\Bigr).
  \end{split}
\end{equation*}
Thus for \( \norm x \leqslant 1/(256c^2) \) we have that
\( \norm{m(q)} > \norm x^{3/2}/16 \).
This implies that \( 0 \) is the only point in this
neighbourhood~\( W_0 = \Set{x\in S_0 \cap B_1(0) \with \norm x <
1/(256c^2)} \) that maps to~\( 0 \) under \( m \).

Now consider a family \( \varphi_t \) of \( T^3 \)-invariant
torsion-free \( \G_2 \)-structures on
\( S^1 \times \sigma^{-1}(W_0) \) with \( \varphi_1 = \varphi \), the
structure we are interested in, and \( \varphi_0 \) the flat
\( \G_2 \)-structure that coincides with \( \varphi \) at~\( 0 \).
Such a family was constructed in~\S\ref{sec:deforming-flat-model} and
the discussion there shows that \( f_t \to f_0 = F_0 \) as
\( t \searrow 0 \).
Moreover the bound \( c_t \) above for \( f_t \) also has
\( c_t \searrow 0 \) and in particular \( c = c_1 \geqslant c_t \) for
all \( t<1 \).

Let us consider the Brouwer degrees of these maps,
cf.~\cite{Mawhin:Brouwer-degree,Dinca-M:Brouwer-degree}: let
\( W_1 \subset\subset W_0\) be an open ball containing~\( 0 \); for
\( f \colon W_0 \to \bR^4 \) of class~\( C^2 \) the \emph{Brouwer
degree} is
\begin{equation*}
  d_B[f,W_1] = \int_{W_1} \chi(\norm{f(x)}) J_f(x)\,dx,
\end{equation*}
where \( J_f = \det Df \) is the Jacobian of~\( f \) and
\( \chi\colon \halfopen{0,\infty} \to \halfopen{0,\infty} \) is
continuous, has the closure of its support contained in
\( (0,\inf_{x\in\partial W_1} \norm{f(x)}) \) and satisfies
\( \int_{\bR^4} \chi(\norm x)\,dx = 1 \).
This definition extends to continuous functions~\( f \) by
approximating them uniformly via smooth functions, and the degree is
homotopy invariant; it agrees with the topological degree of the map
\( f/\norm{f} \colon \partial W_1 \to S^3 \).
For \( z \notin f(\partial W_1) \), the Brouwer degree of \( f \)
at~\( z \) is \( d_B[f,W_1,z] = d_B[f(\any)-z,W_1] \).
At regular values~\( z \), the number \( d_B[f,W_1,z] \) counts the
points \( x \)~in \( f^{-1}(z) \cap W_1 \) with the signs
of~\( J_f(x) \).  Any homeomorphism has \( d_B[f,W_1,z] = \pm1 \).

Now \( F_0 = f_0 \) is a homeomorphism \( S \to \bR^4 \) and has
degree~\( +1 \) at all points.
Furthermore, \( S \) is the set set of
\( (\sigma_1,\dots,\sigma_5) \in \bR^5 \)
satisfying~\eqref{eq:6d-graph}.  Differentiating this equation we have
\begin{equation*}
  p_5 d\sigma_5 = \sum_{i=1}^4 p_i d\sigma_i
\end{equation*}
with
\begin{gather*}
  p_1 = 2\sigma_1, \quad p_2 = 2\sigma_2, \quad p_3 =
  -2\sigma_5(\sigma_5 - 2\sigma_4), \quad
  p_4 = 2\sigma_5(\sigma_5 + 2\sigma_3),\\
  \begin{split}
    p_5
    &= (\sigma_5 + 2\sigma_3)(\sigma_5 - 2\sigma_4) +
      \sigma_5(\sigma_5 - 2\sigma_4) + \sigma_5(\sigma_5 + 2\sigma_3) \\
    &= \abs{z_1z_2}^2+\abs{z_3z_1}^2 + \abs{z_2z_3}^2,
  \end{split}
\end{gather*}
where \( (z_1,z_2,z_3) \) are the coordinates on \( V = \bC^3 \).
In particular, \( \sigma_5 \)~is a smooth function of
\( (\sigma_1,\dots,\sigma_4) \) off the locus \( p_5 = 0 \) which is
the image of the set on which two of the \( z_i \) are zero, i.e., the
image of the complex coordinate axes of~\( V \).
But this is just the locus of points with \( T^3 \)-stabiliser of
dimension at least~\( 1 \) and so is specified purely by the group
action.
Off this locus \( dm_t \) has rank~\( 4 \) and so the same is true
of~\( df_t \).
In particular, off this locus \( df_t \) is a local diffeomorphism.
Furthermore, on the locus but away from \( 0 \), we have
\( S^1 \)-stabilisers and from the four-dimensional models we know
that \( f_t \) is a local homeomorphism.

Now homotopy invariance combined with the fact that
\( f_t^{-1}(0) \cap W_1 = \Set{0} \) implies that each \( f_t \) has
degree \( +1 \) and at smooth points the local degrees are
also~\( +1 \).
It follows that on the smooth locus inside \( W_1 \) the maps
\( f_t \) are one-to-one for all \( t \in [0,1] \).
However, the image \( (p_5=0)\setminus\Set{0} \) consists of three
half-lines each determined the group action, in particular by which
copy of \( S^1 \subset T^2 \) is the corresponding stabiliser.
On this set \( m_t \) is still a local homeomorphism and so is
monotone on each half-line.
As \( f_t \) is local homeomorphism it follows that the local degrees
at these points are also \( +1 \).
Thus \( f_t \) is injective on \( W_1 \).
Using Brouwer's invariance of domain, we conclude that \( f \) is a
homeomorphism from \( W_1 \) to a neighbourhood of \( 0 \in \bR^4 \).

Summarising the above analysis, we have shown:

\begin{theorem}
  \label{thm:homeo}
  Let \( (M,\varphi) \) be a toric \( \G_2 \)-manifold.
  Then \( M/T^3 \) is homeomorphic to a smooth four-manifold.
  Moreover, the multi-moment map \( (\nu,\mu) \) induces a local
  homeomorphism \( M/T^3\to\bR^4 \).  \qed
\end{theorem}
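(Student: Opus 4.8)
The plan is to prove that the $T^3$-invariant multi-moment map $m=(\nu,\mu)\colon M\to\bR^4$ descends to a local homeomorphism $\bar m\colon M/T^3\to\bR^4$ on the orbit space, and then to transport the smooth structure of $\bR^4$ along $\bar m$. Invariance of $m$ (part of the definition of a multi-moment map) guarantees that it descends to a continuous $\bar m$. By Lemma~\ref{lem:conn-isotr} every isotropy group is trivial, a circle or $T^2$, so it suffices to check that $\bar m$ is a local homeomorphism near an orbit of each of these three types.

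On the set $M_0$ of principal orbits the discussion following Proposition~\ref{prop:G2-can-form} shows that $(d\nu,d\mu)$ has full rank, so $\bar m$ restricts to a local diffeomorphism $M_0/T^3\to\bR^4$. For a singular orbit through a point $p$, I would use the slice theorem to identify an invariant neighbourhood with $T^3\times_{\Stab(p)}\Sigma$, where $\Sigma$ is the normal slice representation. When $\Stab(p)\cong S^1$ the effective part of $\Sigma$ is the four-dimensional model $\bC^2$ with its $S^1\subset\SU(2)$ action, treated in \S\ref{sec:one-dimens-stab}; when $\Stab(p)\cong T^2$ it is the six-dimensional model $\bC^3$ with its $T^2\subset\SU(3)$ action, treated in \S\ref{sec:two-dimens-stab}. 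In both cases the estimates of \S\ref{sec:ident-quot}---comparing the multi-moment maps with the Schwarz generators $\sigma_i$, deforming $\varphi$ to the flat structure as in \S\ref{sec:deforming-flat-model}, and invoking Brouwer's invariance of domain---show that $\bar m$ induces a local homeomorphism of the quotient onto a neighbourhood of the image point. As these exhaust all orbit types, $\bar m$ is a local homeomorphism on all of $M/T^3$.

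It then remains to produce the smooth structure. Since $\bar m$ is a local homeomorphism, the pairs $(U,\bar m|_U)$, with $U$ ranging over the open sets on which $\bar m$ is injective, form a topological atlas; on any overlap the transition map $\bar m|_{U'}\circ(\bar m|_U)^{-1}$ equals the identity of its domain and is therefore smooth. Hence this atlas endows $M/T^3$ with the structure of a smooth four-manifold for which $\bar m$ is a local diffeomorphism, and in particular $M/T^3$ is homeomorphic to a smooth four-manifold.

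The main obstacle is concentrated in the singular-orbit analysis of \S\ref{sec:ident-quot}, and there the $T^2$-stabiliser (six-dimensional slice) case is the hardest: the generator $\sigma_5$ fails to be a smooth function of $\sigma_1,\dots,\sigma_4$ along the coordinate axes (where $p_5=0$), so one cannot establish injectivity by a direct linear estimate as in the four-dimensional case. This is precisely why the degree-theoretic argument, together with the homotopy $f_t\to F_0$ furnished by the deformation to the flat model, is needed to conclude that $\bar m$ is injective near such orbits.
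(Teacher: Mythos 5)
Your proposal is correct and follows essentially the same route as the paper: full rank of $(d\nu,d\mu)$ on $M_0$, reduction via the slice theorem to the two flat models of \S\ref{sec:flat-models}, the derivative comparisons and deformation of \S\ref{sec:flat-mod-comp}--\S\ref{sec:deforming-flat-model}, and the Schwarz/Brouwer-degree estimates of \S\ref{sec:ident-quot} followed by invariance of domain. You also correctly isolate the genuine difficulty in the $T^2$-stabiliser case, where the failure of $\sigma_5$ to depend smoothly on $\sigma_1,\dots,\sigma_4$ along the locus $p_5=0$ forces the degree-theoretic injectivity argument.
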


\section{Explicit examples of toric
\texorpdfstring{\( \G_2 \)}{G2}-manifolds}

We now turn to write down some explicit examples of toric
\( \G_2 \)-manifolds.

\subsection{Some complete examples}

In this section, we describe some known non-flat complete examples of
toric \( \G_2 \)-manifolds.

\subsubsection{Holonomy \( \SU(3) \): \( M=S^1\times T^*S^3 \)}

Before turning to a concrete example, it seems worthwhile explaining
how it arises as a particular case of a more general construction of
toric \( \G_2 \)-manifolds with holonomy in \( \SU(3) \).
So assume we have a \( 6 \)-manifold \( N \) with vanishing first
Betti number and equipped with a Calabi-Yau structure
\( (\sigma,\Psi) \).
If there is an effective \( T^2 \)-action on \( N \) preserving
\( \sigma \) and~\( \Psi=\psi+i\hat\psi \), then we have invariant
scalar functions \( (\nu,\mu)\colon N\to \bR^4 \) that satisfy the
relations
\begin{equation*}
  \begin{gathered}
    d\nu_1=\psi(U_2,U_3,\any),\quad d\nu_2=-\sigma(U_3,\any),\quad
    d\nu_3=\sigma(U_2,\any),\\
    d\mu=-\hat\psi(U_2,U_3,\any),
  \end{gathered}
\end{equation*}
where \( U_2,U_3 \) are generators for the torus action.
We can now consider the torsion-free product \( \G_2 \)-structure on
\( M= S^1\times N \) given by
\begin{equation*}
  \varphi=dx\wedge\sigma+\psi,\quad
  \Hodge\varphi=\hat\psi\wedge dx+\tfrac12\sigma^2.
\end{equation*}
Clearly, \( (M,\varphi) \) is toric with \( T^3=S^1\times T^2 \)
acting in the obvious way and associated multi-moment maps
\( (\nu,\mu) \).
Theorem~\ref{thm:homeo} now implies that \( N/T^{2} \) is locally homeomorphic
to~\( \bR^{4} \) and Lemma~\ref{lem:sing-point} implies that the
trivalent graphs lie in the surfaces \( (\nu_{1},\mu) \) constant.

For \( (N,\sigma,\Psi) \) as above there is a special Lagrangian
foliation (of an open dense subset) with \( T^2 \)-symmetry.
The leaves are given by fixing \( (\nu_2,\nu_3,\mu) \) to be constant.
The corresponding distribution is given by the kernel of
\( d\mu\wedge d\nu_{23} \), and the restriction of \( \psi \) to each
leaf is \( \theta_{23}\wedge d\nu_1 \).

As a concrete example of the above, one can take \( N=T^*S^3 \) with
its Stenzel Calabi-Yau structure~\cite{Stenzel:Ricci-flat}.
For our purposes, it is more convenient to identify \( N \) with the
complex sphere
\begin{equation*}
  Q=\Set[\Big]{z\in \bC^4 \with \sum_{j=0}^3z^2_j=1},
\end{equation*}
following~\cite{Ionel-M:SLag-T2}.
Specifically, one has the \( \SO(4) \)-equivariant diffeomorphism
\begin{equation*}
  T^*S^3\ni(p,v)
  \mapsto \cosh(\norm{v})p+i\sinh(\norm{v})\frac{v}{\norm{v}}\in Q,
\end{equation*}
(see~\cite{Szoke:cotangent-diffeo}).
In terms of \( Q \), the K\"ahler \( 2 \)-form is given by
\( \sigma=d\alpha \), where
\begin{equation*}
  \alpha(X)_z=\tfrac12f'(\abs{z}^2)\im(X^t\overline{z}),
  \qquad X\in T_z Q,\; z\in Q,
\end{equation*}
with \( f \) satisfying the following differential equation:
\begin{equation*}
  ((f_u)^3)_u=3k(\sinh u)^2,
\end{equation*}
for some constant \( k>0 \).
The holomorphic volume form can be computed as
\begin{equation*}
  \Psi(X_1,X_2,X_3)_z = dz_{0123}(z,X_1,X_2,X_3),
\end{equation*}
for \( X_1,X_2,X_3\in T_z Q \) and \( z\in Q \).

For the \( T^2 \)-action, we consider \( T^2\subset \SO(4) \)
generated by the vector fields
\begin{equation*}
  U_2(z)=(-z_1,z_0,0,0),\quad U_3(z)=(0,0,-z_3,z_2).
\end{equation*}
In accordance with~\cite[Thm.~5.2]{Ionel-M:SLag-T2} one finds that the
multi-moment maps are
\begin{gather*}
  \nu_1+i\mu=\tfrac12(\bar{z}_0^2+\bar{z}_1^2),\quad
  \nu_2=-f'(\abs{z}^2)\im(z_2\bar{z}_3),\quad
  \nu_3=f'(\abs{z}^2)\im(z_0\bar{z}_1).
\end{gather*}

Many other examples are to be found in
\cite{Aganagic-KMV:top-vertex,Li-LLZ:top-vertex} and related works.

\subsubsection{The cone over \( S^3\times S^3 \) and its deformation}
\label{sec:Cone-and-BS-S3S3}

As mentioned in~\S\ref{sec:eff-T3}, one example of a complete toric
\( \G_2 \)-manifold with holonomy equal to \( \G_2 \) is the spin
bundle over \( S^3 \) equipped with its Bryant-Salamon structure.
It may be viewed as a deformation of the cone over \( S^3\times S^3 \)
with its nearly K\"ahler structure.
In both cases, one can describe the \( \G_2 \)-structure in terms of
one-parameter families of left-invariant half-flat
\( \SU(3) \)-structures on
\( S^3\times S^3\cong\Sp(1)\times\Sp(1)\subset \bH\times \bH \).

To make this concrete, let us take
\( \Set{(i,0),(j,0),(-k,0),(0,i),(0,j),(0,-k)} \) as our basis of
\( \lie{sp}(1)\oplus\lie{sp}(1)\cong T_1(S^3\times S^3) \).
Correspondingly, the tangent space at \( (p,q)\in S^3\times S^3 \) has
basis
\begin{equation}
  \label{eq:nK-basis}
  \begin{gathered}
    E_1(p,q)=(pi,0),\quad E_2(p,q)=(pj,0),\quad E_3(p,q)=(-pk,0),\\
    F_1(p,q)=(0,qi),\quad F_2(p,q)=(0,qj),\quad F_3(p,q)=(0,-qk).
  \end{gathered}
\end{equation}
If we let \( e^1,\dots, f^3 \) denote the dual co-frame, then
\( de^i=2e^{jk} \) and \( df^i=2f^{jk} \), \( (ijk)=(123) \).

We have an almost effective action of \( \Sp(1)^3 \) on
\( S^3\times S^3 \) given by
\begin{equation*}
  ((h,k,\ell),(p,q))\mapsto(hp\ell^{-1},kq\ell^{-1})
\end{equation*}
that preserves the half-flat \( \SU(3) \)-structures of interest
(cf.~\cite{Conti-M:SO3}).
By choosing a maximal torus \( S^1 \) in each \( \Sp(1) \), we obtain
an almost effective action of~\( T^3 \).
Considering the quotient of \( T^3 \) by
\( \mathbb Z_2=\Set{\pm(1,1,1)} \), we get an effective action of a
torus~\( T^3 \).
For concreteness, let us choose each maximal torus
\( T^1\subset\Sp(1) \) to be of the form
\( \Set{e^{i\theta} \with \theta\in\bR} \).
In this case, we have generating vector fields given by
\begin{equation*}
  U_1(p,q)=(ip,0),\quad U_2(p,q)=(0,iq),\quad U_3(p,q)=(-pi,-qi).
\end{equation*}
Following~\cite{Dixon:multi-moment-im}, we can express these vector
fields in terms of~\eqref{eq:nK-basis} via
\begin{equation*}
  \begin{gathered}
    U_1(p,q)=\inp{\bar{p}ip}{i}E_1(p,q)+\inp{\bar{p}ip}{j}E_2(p,q)-\inp{\bar{p}ip}{k}E_3(p,q),\\
    U_2(p,q)=\inp{\bar{q}iq}{i}F_1(p,q)+\inp{\bar{q}iq}{j}F_2(p,q)-\inp{\bar{q}iq}{k}F_3(p,q),\\
    U_3(p,q)=-E_1(p,q)-F_1(p,q),
  \end{gathered}
\end{equation*}
where \( \inp{\any}{\any} \) is the usual inner product on
\( \im\bH\cong\bR^3 \).
Note that each of the maps \( p\mapsto \bar{p}ip \),
\( q\mapsto\bar{q}iq \) is a standard Hopf fibration
\( \pi_{H}\colon S^3\to S^2\subset\im\bH \).
We see that the span of the \( U_1,U_2,U_3 \) is \( 3 \)-dimensional,
unless
\( p,q \in\pi_{H}^{-1}(\Set{\pm i}) = \Set{e^{i\theta},je^{i\theta}
\with \theta\in\bR} \).

The nearly K\"ahler structure on \( S^3\times S^3 \) can be expressed
as
\begin{equation*}
  \begin{gathered}
    \sigma=\tfrac{2}{3\sqrt3}(e^1f^1+e^2f^2+e^3f^3),\\
    \psi=\tfrac4{9\sqrt{3}}(e^{23}f^1+e^{31}f^2+e^{12}f^3-e^1f^{23}-e^2f^{31}-e^3f^{12}),\\
    \hat{\psi}=\tfrac4{27}(-2e^{123}-2f^{123}+e^1f^{23}+e^2f^{31}+e^3f^{12}+e^{23}f^1+e^{31}f^2+e^{12}f^3).
  \end{gathered}
\end{equation*}
Specifically this means that \( (\sigma,\psi) \) defines an
\( \SU(3) \)-structure satisfying \( d\sigma=3\psi \) and
\( d\hat\psi=-2\sigma^2 \).
As mentioned above, \( T^3 \)~acts effectively, preserving the nearly
K\"ahler structure, and we have associated multi-moment maps
\( (\tilde\nu,\tilde\mu)\colon S^3\times S^3\to\bR^4 \) for the pair
of closed forms \( (\psi,\sigma^2) \).
As \( d\sigma=3\psi \) and \( d\hat{\psi}=-2\sigma^2 \), it is
particularly easy to compute the maps \( (\tilde\nu,\tilde\mu) \):
by~\cite[Prop.~3.1]{Madsen-S:mmmap2} we have that
\( \tilde{\nu}_i=\tfrac13\sigma(U_j,U_k) \) and
\( \tilde{\mu}=\tfrac12\hat\psi(U_1,U_2,U_3) \).

The conical \( \G_2 \)-structure on \( \bR_+\times S^3\times S^3 \) is
given by
\begin{equation*}
  \begin{gathered}
    \varphi_C=dr\wedge r^2\sigma+r^3\psi=d(\tfrac13r^3\sigma),\quad
    \Hodge{\varphi}_C=r^3\hat{\psi}\wedge
    dr+\tfrac12r^4\sigma^2=d(-\tfrac14r^4\hat{\psi}).
  \end{gathered}
\end{equation*}
It follows that
\begin{equation*}
  U_i\wedge U_j\hook\varphi
  = 3r^2\tilde{\nu}_k dr + r^3d\tilde{\nu}_k
  = d(r^3\tilde{\nu}_k)
\end{equation*}
and
\begin{equation*}
  U_1\wedge U_2\wedge U_3\hook\Hodge\varphi=2r^3\tilde\mu dr+\tfrac12r^4d\tilde\mu=d(\tfrac12r^4\tilde\mu).
\end{equation*}
So in terms of nearly K\"ahler data, the multi-moment maps
\( (\nu^C,\mu^C)\colon \bR_+\times S^3\times S^3\to\bR^4 \) are given
by \( (\nu^C,\mu^C)=(r^3\tilde{\nu},\frac{r^4}2\tilde\mu) \).
Explicitly,
\begin{equation*}
  \begin{gathered}
    \nu^C_1(r,(p,q))=\tfrac{2r^3}{9\sqrt3}\inp{\bar{q}iq}{i},\quad \nu^C_2(r,(p,q))=\tfrac{2r^3}{9\sqrt3}\inp{\bar{p}ip}{i},\\
    \nu^C_3(r,(p,q))=\tfrac{2r^3}{9\sqrt3}\inp{\bar{p}ip}{\bar{q}iq},\\
    \mu^C(r,(p,q))=\tfrac{2r^4}{27}\bigl(\inp{\bar{p}ip}{j}\inp{\bar{q}iq}{k}-\inp{\bar{p}ip}{k}\inp{\bar{q}iq}{j}\bigr).
  \end{gathered}
\end{equation*}

From the remarks about Hopf-fibrations, it is clear that
\( (\nu^C,\mu^C) \) induces a map
\( \bR_+\times S^2\times S^2\to \bR^4 \) given by
\begin{equation*}
  \begin{gathered}
    (r,(v,w)) \mapsto \tfrac{2r^3}{9\sqrt3} \bigl(\inp{v}{i},
    \inp{w}{i}, \inp{v}{w},
    \tfrac{2r}{\sqrt3}(\inp{v}{j}\inp{w}{k}-\inp{v}{k}\inp{w}{j})\bigr).
  \end{gathered}
\end{equation*}

Turning now to the Bryant-Salamon solution on the spin bundle of
\( S^3 \), we begin by observing that this can be written in the form
\begin{equation*}
  \begin{gathered}
    \varphi_{BS}
    =-\tfrac4{3\sqrt3}\epsilon(e^{123}-f^{123})+d(\tfrac13(r^3-\epsilon)\sigma),\\
    \Hodge{\varphi_{BS}} =\tfrac49\epsilon dr\wedge(e^{123}+f^{123})
    +(r^3-\epsilon)\hat\psi\wedge dr+\tfrac12
    r(r^3-4\epsilon)\sigma^2,
  \end{gathered}
\end{equation*}
for some \( \epsilon>0 \) (see, e.g., \cite{Brandhuber-al:G2}).
Then, building on the computations from the nearly K\"ahler case, we
find that the multi-moment maps for the toric Bryant-Salamon manifold
are
\begin{equation*}
  \begin{gathered}
    \nu^{BS}_1(r,(p,q))=\tfrac2{9\sqrt3}(r^3-4\epsilon)\inp{\bar{q}iq}{i},\\
    \nu^{BS}_2(r,(p,q))=\tfrac2{9\sqrt3}(r^3-4\epsilon)\inp{\bar{p}ip}{i},\\
    \nu^{BS}_3(r,(p,q))=\tfrac2{9\sqrt3}(r^3-\epsilon)\inp{\bar{p}ip}{\bar{q}iq},\\
    \mu^{BS}(r,(p,q))=\tfrac2{27}r(r^3-4\epsilon)\bigl(\inp{\bar{p}ip}{j}\inp{\bar{q}iq}{k}-\inp{\bar{p}ip}{k}\inp{\bar{q}iq}{j}\bigr).
  \end{gathered}
\end{equation*}
In this case, the matrix \( V \) has inverse given by
\begin{equation*}
  V^{-1}
  =\begin{pmatrix}
    \tfrac{4(r^3-\epsilon)}{9r} & -\tfrac{\sqrt3}r\tfrac{2\epsilon+r^3}{r^3-\epsilon}\nu_3^{BS}& -\tfrac{\sqrt3}r\nu_2^{BS}\\
    -\tfrac{\sqrt3}r\tfrac{2\epsilon+r^3}{r^3-\epsilon}\nu_3^{BS} & \tfrac{4(r^3-\epsilon)}{9r} & -\tfrac{\sqrt3}r\nu_1^{BS}\\
    -\tfrac{\sqrt3}r\nu_2^{BS} & -\tfrac{\sqrt3}r\nu_1^{BS} &\tfrac{4(r^3-4\epsilon)}{9r}
  \end{pmatrix}.
\end{equation*}

We obtain the values of the multi-moment map on the zero section of
the spin bundle by continuity.
Away from this zero section, the points with one-dimensional
stabilisers map to the straight lines
\( (\varepsilon_1 t, \varepsilon_2 t, \varepsilon_1\varepsilon_2
(t+k), 0) \), where \( \varepsilon_i \in \Set{\pm1} \),
\( k = 2\epsilon/(3\sqrt3) \) and \( t > 0 \).
The limit \( t\searrow 0 \) gives points with stabiliser \( T^2 \) and
the preimages of the interior of the line segment from
\( (0,0,-k,0) \) to \( (0,0,k,0) \) is also a family of points with
one-dimensional stabiliser.
The image of the singular orbits is thus of the form
\scalebox{.5}{\begin{picture}(45,20)(0,0) \put(0,0){\line(1,1){10}}
  \put(0,20){\line(1,-1){10}} \put(10,10){\line(1,0){20}}
  \put(30,10){\line(1,-1){10}} \put(30,10){\line(1,1){10}}
\end{picture}}.

For \( r \) fixed large, \( (\nu, \mu/r) \) essentially induces the
map
\( (x, z, y, w) \mapsto (x,y,xy + \norm z\norm w \cos\theta, \norm
z\norm w \sin\theta) \), where
\( (x,z),(y,w) \in S^2 \subset \bR \times \bC \) and \( \theta \)~is
the oriented angle from \( z \) to~\( w \).
On the quotient space this map is thus a homeomorphisms of topological
three spheres and of global degree~\( 1 \).
From the general theory, we know \( (\nu,\mu) \) has local degree
\( +1 \), so we conclude that the multi-moment map is injective on the
orbit space.
However, varying the parameter \( r \), we get a deformation retract
to the ellipsoids to the line segment
\( \Set{(0,0,t,0) \with t \in [-k,k] } \), so the multi-moment map is
onto.
We conclude that the multi-moment map is a homeomorphism from the
\( T^3 \) orbit space of the spin bundle onto \( \bR^4 \).

\begin{remark}
  After completing this paper, Foscolo, Haskins and Nordst\"om
  \cite{Foscolo-HN:G2TNEH} have constructed many new examples of
  \( \G_{2} \)-manifolds, including several examples with
  \( T^{3} \)-symmetry.
  For some of these, we find that the corresponding trivalent graphs
  are planar (see~\cite{Swann:Waterloo}), even though the holonomy
  group is the whole of~\( \G_{2} \).
\end{remark}

\subsection{Ans\"atze simplifying the PDEs}
\label{sec:cases}

From a PDE viewpoint a particular challenge is the fact that the
characterisation of toric \( \G_2 \)-manifolds involves the coupled
system consisting of both first order PDEs~\eqref{eq:div-free} and a
second order system~\eqref{eq:elliptic}.
In the following, we shall study some special cases that circumvent
this complicating issue.
This allows us to construct many explicit (but generally incomplete)
examples of toric \( \G_2 \)-manifolds.
In particular, we find that simple polynomial solutions in the
variables \( (\nu,\mu) \) can lead to metrics with holonomy equal to
\( \G_2 \).

\subsubsection{One variable dependence}

Let us assume that \( V \) depends only on the variable \( \mu \), so
\( \partial V/\partial \nu_i=0 \), \(i=1,2,3 \).
Then \( Z\equiv0 \).
The condition that \( d\omega=0 \) now yields that
\( \partial^2 V_{ij}/\partial\mu^2=0 \).
So \( V \) is linear in \( \mu \) and thus \( W \) is constant.

\begin{example}
  \label{ex:mu-dep}
  Taking \( V=\diag(\mu,\mu,\mu) \) gives a solution defined for all
  \( \mu>0 \).
  In this case, the associated \( \G_2 \)-metric takes the form
  \begin{equation*}
    g=\tfrac1{\mu}(\theta_1^2+\theta_2^2+\theta_3^2)
    + \mu^2(d\nu_1^2+d\nu_2^2+d\nu_3^2)+\mu^3d\mu^2,
  \end{equation*}
  where \( d\theta_i=d\nu_j\wedge d\nu_k \), \( (ijk)=(123) \).

  This metric has (restricted) holonomy equal to \( \G_2 \) as can be
  seen, e.g., by computing the Riemannian curvature: regarded as a
  \( 2 \)-form \( \Omega=(\Omega_{ij}) \) on \( T^3\times \cU \) with
  values in an associated \( \lie g_2 \)-bundle, the span of
  \( \Omega_{ij} \), \( 1\leqslant i\leqslant j\leqslant 7 \), has
  dimension \( 14 \).
\end{example}

From the viewpoint of complete metrics, this situation turns out to be
less interesting.

\begin{proposition}
  \label{prop:incompl-mu-dep}
  Suppose \( V = V(\mu) \).
  If \( (M,\varphi) \) is complete, then it is flat and hence locally
  isometric to \( \bR^7 \).
\end{proposition}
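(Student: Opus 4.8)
The plan is to show that completeness forces $V$ to be \emph{constant}, whence Corollary~\ref{cor:V-const} gives flatness. Recall from the paragraph preceding the statement that $V=V(\mu)$ already forces $Z\equiv 0$ and $\partial^2V/\partial\mu^2=0$, so $V$ is affine: $V(\mu)=V_0+\mu V_1$ with $V_0,V_1$ constant symmetric $3\times3$-matrices. If $V_1=0$ then $V$ is constant and we are done, so I would assume $V_1\neq 0$ and aim for a contradiction with completeness.

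First I would pin down the domain of definition. On $M_0$ the matrix $V$ is positive definite, so $\mu$ takes values in $I=\Set{\mu\in\bR\with V(\mu)>0}$, an open interval since $V$ is affine. Choosing an eigenvector $v$ of $V_1$ for a nonzero eigenvalue $\lambda$, the function $\mu\mapsto v^{t}V(\mu)v=v^{t}V_0v+\lambda\mu\abs{v}^2$ is strictly monotone and unbounded, so $v^{t}V(\mu)v$ becomes negative in one direction; hence $I$ has a finite endpoint $\mu_*$. By continuity $V(\mu_*)\geqslant 0$ with nontrivial kernel, so $\det V(\mu_*)=0$. Note also that $\det V(\mu)$ is a polynomial in $\mu$, hence bounded on the compact interval between any interior point and $\mu_*$.

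Next I would produce an incomplete geodesic running to this boundary. Fix $p_0\in M_0$ and take the geodesic $\gamma$ with $\gamma(0)=p_0$ whose initial velocity is the unit horizontal vector dual to $d\mu$, so $\theta(\dot\gamma(0))=0$, $d\nu_i(\dot\gamma(0))=0$ and $d\mu(\dot\gamma(0))>0$. Because each generator $U_a$ is Killing, the momenta $g(\dot\gamma,U_a)$ are conserved and vanish initially, so $\gamma$ stays horizontal and therefore descends to a geodesic of the base metric $d\nu^{t}\adj(V)\,d\nu+\det(V)\,d\mu^2$ of Proposition~\ref{prop:G2-can-form}. Since the coefficients of this block-diagonal metric depend on $\mu$ only, the symbols $\Gamma^{\nu_k}_{\mu\mu}$ vanish, so the $\mu$-axis $\Set{\nu=\nu_0}$ is a geodesic and uniqueness gives $\nu\equiv\nu_0$ along $\gamma$, with $\dot\mu=\det V(\mu)^{-1/2}>0$. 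Thus $\mu$ increases strictly and the arc length up to $\mu_*$ is $s_*=\int^{\mu_*}\sqrt{\det V(\tau)}\,d\tau$, which is \emph{finite} by the boundedness noted above.

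Finally I would derive the contradiction. If $M$ were complete then $\gamma$ extends to $s_*$ and $q:=\gamma(s_*)\in M$ is a limit of points of $M_0$ with $\mu(q)=\mu_*$. If $q\in M_0$, then $V$ is positive definite at $q$, forcing $\det V(\mu_*)>0$, contrary to $\det V(\mu_*)=0$. If instead $q\in M\setminus M_0$, then Lemma~\ref{lem:sing-point} gives $\rank B_q\leqslant 2$, so $\det B\to 0$ and hence $\det V=1/\det B\to+\infty$ as $\gamma(s)\to q$; but along $\gamma$ one has $\det V=\det V(\mu(s))\to\det V(\mu_*)=0$, again a contradiction. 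Hence $\gamma$ cannot be extended, contradicting completeness, so $V_1=0$, and Corollary~\ref{cor:V-const} finishes the proof. I expect the genuine difficulty to lie in this last step, namely excluding $q$ from both strata of $M$; the key point making it work is the dichotomy between the two degenerations of $V$ — the regular one $\det V\to 0$ at finite $\mu$ versus the singular-orbit one $\det V\to\infty$ where $B$ drops rank — so that a finite-$\mu$ boundary with $\det V\to 0$ can meet neither $M_0$ nor a singular orbit.
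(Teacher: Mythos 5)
Your proof is correct and follows essentially the same route as the paper: reduce to showing \( V \) is constant via Corollary~\ref{cor:V-const}, observe that a non-constant affine \( V \) must degenerate at some finite \( \mu_* \), and produce a finite-length curve in the \( \mu \)-direction whose endpoint can lie neither in \( M_0 \) (where \( V \) would be positive definite) nor in \( M\setminus M_0 \) (where \( \det B = 0 \), while along the curve \( \det B = 1/\det V \to \infty \)). The only real difference is cosmetic: the paper takes the horizontal lift of the \( \mu \)-coordinate curve and invokes the completeness criterion of \cite{Cortes-al:completeness} (a finite-length curve not contained in any compact set), whereas you run an explicit horizontal geodesic and conclude via Hopf--Rinow directly.
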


\begin{proof}
  By Corollary~\ref{cor:V-const}, it suffices to show that
  completeness forces \( V \) to be a constant matrix.
  So let us assume \( V \) is not constant.

  After adding a constant to \( \mu \), if necessary, we may assume
  that \( V(0)> 0 \) and then it follows by
  Remark~\ref{rem:GL3act-quadform} that we can take \( V(0)=1_3 \).
  In fact, using the action of \( \GL(3,\bR) \) on \( S^2(\bR^3) \),
  we can even assume \( V \) has the form
  \( V(\mu)=\diag( \lambda_1\mu+1,\lambda_2\mu+1,\lambda_3\mu+1) \)
  where \( \lambda_1\geqslant\lambda_2\geqslant\lambda_3 \).

  As \( V \) is not constant, there is \( \lambda_i\neq0 \) such that
  the rank of \( V \) drops (the first time) when
  \( \mu=-1/\lambda_i \).
  By Lemma~\ref{lem:sing-point}, we cannot be approaching a point
  \( p\in M\setminus M_0 \), i.e, a singular orbit, as we have
  \( \det(B)\to\infty \).
  To show that this implies incompleteness of the \( \G_2 \)-metric,
  we use the criterion of~\cite[Lem.~1]{Cortes-al:completeness}: we
  look for a finite length curve not contained in any compact set.

  In the base space of our \( T^3 \)-bundle, we have a curve
  \( \gamma \), defined on \( \halfclosed{-1/\lambda_i,0} \),
  corresponding to a curve parameterised by the \( \mu \)-coordinate.
  Let \( p\in M_0 \) be a point projecting to \( \gamma(0) \) and
  \( \tilde\gamma \) the horizontal lift of \( \gamma \) with
  \( \tilde\gamma(0)=p \).
  Clearly, the curve
  \( \tilde\gamma\colon\halfclosed{-1/\lambda_i,0}\to M_0 \) has
  finite length, but is not contained in any compact set.
\end{proof}

In the cases where \( V \) depends only on one of the variables
\( \nu_i \), similar arguments and conclusions apply.

\subsubsection{Orthogonal Killing vectors}

Let us assume \( V_{ij}=0 \) for all \( i\neq j \), i.e., the
generating vector fields for the torus action are orthogonal.
The \( \G_2 \)-metric now takes the form
\begin{equation*}
  g = \tfrac1{V_{11}}\theta_1^2 + \tfrac1{V_{22}}\theta_2^2 +
  \tfrac1{V_{33}}\theta_3^2
  + V_{11}V_{22}V_{33} \bigl(d\mu^2 + \tfrac1{V_{11}}d\nu_1^2 +
  \tfrac1{V_{22}}d\nu_2^2 + \tfrac1{V_{33}}d\nu_3^2 \bigr).
\end{equation*}

In this case, \( W \) is diagonal with non-zero entries given by
\( w^j_j=\partial V_{jj}/\partial\mu \), and \( Z \) has zeros on the
diagonal and off-diagonal entries given by
\begin{equation*}
  z_i^j=-V_{kk}\D{V_{ii}}{\nu_k},\quad z_j^i=V_{kk}\D{V_{jj}}{\nu_k},
\end{equation*}
with \( (ijk)=(123) \).

The divergence-free condition~\eqref{eq:div-free} tells us that
\( \partial V_{ii}/\partial\nu_i=0 \), for \( i=1,2,3 \).
Then the condition \( d\omega=0 \) is given by the equations
\begin{equation}
\label{eq:diagoonal_2nd_order}
  \Dsq{V_{ii}}{\mu}+ V_{jj}\Dsq{V_{ii}}{\nu_j} + V_{kk}\Dsq{V_{ii}}{\nu_k}=0 \qquad (ijk)=(123)
\end{equation}
together with
\begin{equation}
\label{eq:diagoonal_1st_order}
  \D{V_{ii}}{\nu_j}\D{V_{jj}}{\nu_i}=0
\end{equation}
for \( i\neq j \).

Assume now that one has \( \partial V_{ii}/\partial\nu_j\neq0 \), for some
\( j\neq i \).
Without loss of generality, we can take
\( \partial V_{11}/\partial\nu_2\neq0 \), which forces
\( \partial V_{22}/\partial\nu_1=0 \).
So \( V_{22} \) is a function of \( \nu_3 \) and \( \mu \) alone.
By differentiating the equation \eqref{eq:diagoonal_2nd_order}, for \( i=2 \),
we then find that
\begin{equation*}
  \D{V_{33}}{\nu_1}\Dsq{V_{22}}{\nu_3}=0=\D{V_{33}}{\nu_2}\Dsq{V_{22}}{\nu_3}.
\end{equation*}
So either \( \partial^2V_{22}/\partial\nu_3^2 \) vanishes identically,
or there is an open set where
\( \partial V_{33}/\partial \nu_i=0 \), for \( i=1,2,3 \).

In the first case, \( V_{22} \), as a function of \( \nu_3 \), has
non-vanishing derivative of order at most one and so is either
constant or linear in that variable. Correspondingly, we have
\( \partial V_{22}/\partial\nu_3=0 \) or
 \( \partial V_{22}/\partial\nu_3\neq0 \), respectively.

If \( \partial V_{22}/\partial\nu_i=0 \), \( i=1,2,3 \), the additional
information captured by \eqref{eq:diagoonal_1st_order}
is that either \( \partial V_{11}/\partial\nu_3=0 \) or
\( \partial V_{33}/\partial\nu_1=0 \) in an open neighbourhood.
If \( \partial V_{22}/\partial\nu_3\neq0 \), then \eqref{eq:diagoonal_1st_order} moreover
tells us that  \( \partial V_{33}/\partial\nu_2=0 \).

Considering the case where \( \partial V_{22}/\partial\nu_3\neq0 \) and
\begin{equation*}
  \D{V_{11}}{\nu_3}=0=\D{V_{22}}{\nu_1}=\D{V_{33}}{\nu_2},
\end{equation*}
 \eqref{eq:diagoonal_2nd_order} reduces to the equations
\begin{equation*}
  \Dsq{V_{11}}{\mu}+ V_{22}\Dsq{V_{11}}{\nu_2}=0,\quad
  \Dsq{V_{33}}{\mu}+ V_{11}\Dsq{V_{33}}{\nu_1}=0.
\end{equation*}
Differentiating the first of these expressions with respect to
\( \nu_3 \), we find that \( V_{11} \) is (at most) linear in
\( \nu_2 \). Similarly, from differentiating the second equation above with respect to
\( \nu_2 \), we find that either \( V_{33} \) is (at most) linear in
\( \nu_1 \) otherwise \( \partial V_{11}/\partial\nu_2=0 \) on an open
set.

After possibly relabelling indices, the above considerations imply that there are two ways
to satisfy \eqref{eq:diagoonal_2nd_order} and \eqref{eq:diagoonal_1st_order}.
The first one is to have each \( V_{ii} \) (at most) a linear function
in two variables as follows:
\begin{equation}
  \label{eq:diag-ans-casei}
  V_{11}= V_{11}(\mu,\nu_2),\quad V_{22}= V_{22}(\mu,\nu_3),\quad
  V_{33}= V_{33}(\mu,\nu_1).
\end{equation}
From the viewpoint of complete metrics this is less interesting:

\begin{proposition}
  If \( (M,\varphi) \) is complete with \( V \) diagonal and its
  entries satisfy~\eqref{eq:diag-ans-casei}, then \( (M,\varphi) \) is
  flat and hence locally isometric to \( (\bR^7,\varphi_0) \).
\end{proposition}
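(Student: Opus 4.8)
The plan is to show that completeness forces \( V \) to be a constant matrix, whereupon Corollary~\ref{cor:V-const} gives flatness. First I would record the structural consequence of the torsion-free equations under the ansatz. Combined with the divergence-free condition \( \partial V_{ii}/\partial\nu_i=0 \) and the reduction carried out immediately before the proposition, the diagonal equations~\eqref{eq:diagoonal_2nd_order} and~\eqref{eq:diagoonal_1st_order} force each entry to be affine in its single \( \nu \)-variable; concretely \( \partial^2V_{11}/\partial\nu_2^2=0 \), and cyclically. This is the only input I need: for fixed \( \mu \), each \( V_{ii} \) is an affine function of its \( \nu \)-variable.

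I would then split into two cases according to whether some \( V_{ii} \) depends nontrivially on a \( \nu \)-variable. If none does, then \( V=V(\mu) \) is diagonal and Proposition~\ref{prop:incompl-mu-dep} already yields that completeness implies flatness, and we are done. So the substance is to rule out, under completeness, the case that some entry genuinely depends on its \( \nu \)-variable; after relabelling, say \( V_{11} \) depends on \( \nu_2 \).

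For this I would exhibit a finite-length curve leaving every compact set and invoke the completeness criterion of~\cite[Lem.~1]{Cortes-al:completeness}, exactly as in Proposition~\ref{prop:incompl-mu-dep}. Choose \( \mu_0 \) with \( \partial V_{11}/\partial\nu_2\neq0 \) at \( \mu_0 \), and a principal-orbit point over a base point \( (\nu_1^0,\nu_2^0,\nu_3^0,\mu_0)\in\cU \). Following the coordinate line in which only \( \nu_2 \) varies, the entries \( V_{22}(\mu_0,\nu_3^0) \) and \( V_{33}(\mu_0,\nu_1^0) \) stay constant (neither involves \( \nu_2 \)), while \( V_{11}(\mu_0,\nu_2) \) is affine and hence vanishes at some finite \( \nu_2^* \). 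The coefficient of \( d\nu_2^2 \) in the diagonal \( \G_2 \)-metric is \( V_{11}V_{33} \), so the length element \( \sqrt{V_{11}V_{33}}\,d\nu_2 \) behaves like \( \sqrt{\nu_2^*-\nu_2} \) near \( \nu_2^* \) and is integrable: the horizontal lift has finite length. Along it \( \det B=1/(V_{11}V_{22}V_{33})\to\infty \), whereas \( \det B \) is smooth, hence bounded on compact subsets of \( M \), and vanishes on singular orbits by Lemma~\ref{lem:sing-point}; so the curve meets no compact set, contradicting completeness.

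The point requiring care is that the degenerating line stays in \( \cU \) all the way to \( \nu_2^* \), and I would handle it by maximality. Let \( s^*\le\nu_2^* \) be the supremum of parameters to which the lift extends in \( M_0 \). If \( s^*<\nu_2^* \) then all \( V_{ii}(\mu_0,s^*)>0 \), so \( \det B \) stays finite and positive along the approach; by completeness the finite-length lift converges to some \( q\in M \), and \( q \) can lie neither in \( M_0 \) (else, \( \cU \) being open, the line extends past \( s^* \), contradicting maximality) nor on a singular orbit (where \( \det B=0 \)); the only remaining possibility, that the lift fails to converge, already contradicts completeness. Thus \( s^*=\nu_2^* \), the escaping finite-length curve exists, and completeness fails. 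This contradiction leaves only \( V=V(\mu) \), whence Proposition~\ref{prop:incompl-mu-dep} shows \( (M,\varphi) \) is flat and so locally isometric to \( (\bR^7,\varphi_0) \).
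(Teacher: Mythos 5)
Your proof is correct and takes essentially the same route as the paper's: the published argument likewise reduces to the method of Proposition~\ref{prop:incompl-mu-dep}, writing the diagonal entries in their (at most) linear form and choosing a coordinate line along which some \( V_{ii} \) vanishes at a finite parameter value, so that the horizontal lift has finite length while \( \det B\to\infty \), contradicting completeness via \cite[Lem.~1]{Cortes-al:completeness}. Your explicit maximality argument for extending the lift up to the degeneration value fills in a detail the paper's sketch leaves implicit, but the strategy is identical.
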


\begin{proof}
  This is essentially proved in the same way as
  Proposition~\ref{prop:incompl-mu-dep}.
  We may assume that \( V(0)>0 \).
  Consequently, we can write \( V \) in the form:
  \begin{equation*}
    \diag(
    \epsilon_1\nu_2\mu +\kappa_1\nu_2+\lambda_1+1,
    \epsilon_2\nu_3\mu+\kappa_2\nu_3+ \lambda_2+1,
    \epsilon_3\nu_1\mu + \kappa_3\nu_1+\lambda_3+1 ).
  \end{equation*}
  By considering suitable curves (corresponding to \( (0,\mu) \),
  \( (\nu_1,0) \) etc.), we arrive at the asserted conclusion.
\end{proof}

The second and more interesting possibility is to have
\(  \partial V_{33}/\partial \nu_i=0 \), \( i=1,2,3 \), together with
\begin{equation*}
  \D{V_{22}}{\nu_1}=0=\D{V_{22}}{\nu_2},\quad
  \D{V_{11}}{\nu_1}=0.
\end{equation*}
In this case, \eqref {eq:diagoonal_2nd_order} corresponds to the following elliptic hierarchy:
\begin{equation}
  \label{eq:PDE-diag}
  \Dsq{V_{11}}{\mu}+ V_{22}\Dsq{V_{11}}{\nu_2} +
  V_{33}\Dsq{V_{11}}{\nu_3}=0, \quad \Dsq{V_{22}}{\mu} +
  V_{33}\Dsq{V_{22}}{\nu_3}=0,\quad \Dsq{V_{33}}{\mu}=0.
\end{equation}
So again \( V_{33} \) is at most a linear function of \( \mu\), and
\( V \) is independent of \( \nu_1 \).
This means, in particular, that \( U_2 \) and \( U_3 \) have no zeros,
i.e., there are no points with \( T^2 \)-isotropy, and points with
\( S^1 \)-isotropy lie above disjoint lines parallel to the \( \nu_{1} \)-axis.

When \( V_{33} \) is constant, which we can take to be \( 1 \), the
\( \G_2 \)-metric is a product
\begin{equation*}
  g = \theta_3^2 + \tfrac1{V_{11}}\theta_1^2 +
  \tfrac1{V_{22}}\theta_2^2 + V_{11}V_{22} \bigl(d\mu^2 +
  \tfrac1{V_{11}}d\nu_1^2 + \tfrac1{V_{22}}d\nu_2^2 + d\nu_3^2 \bigr),
\end{equation*}
so the holonomy reduces to a subgroup of \( \SU(3) \).

Reducing the holonomy further, one obvious solution to the elliptic
system in this case is given by taking \( V_{22}=1=V_{33} \) and
\( V=V_{11}(\mu,\nu_2,\nu_3) \) to be a harmonic function on an
\( \bR^3 \).
Then the associated \( \G_2 \)-holonomy metric is given by
\begin{equation*}
  g = \theta_2^2 + \theta_3^2 + d\nu_1^2
  + \tfrac1{V} \theta_1^2 + V\,(d\mu^2+d\nu_2^2+d\nu_3^2).
\end{equation*}
This has the form of a product of a flat metric on (an open set of)
\( T^2\times \bR \) and a hyperK\"ahler metric on an \( S^1 \)-bundle
over (an open set of) \( \bR^3 \).

Excluding these cases of reduced holonomy, we are thus left with
analysing the equations:
\begin{equation*}
  \begin{gathered}
    \Dsq{V_{11}}{\mu}+ V_{22}\Dsq{V_{11}}{\nu_2} +
    \mu\Dsq{V_{11}}{\nu_3}=0, \quad \Dsq{V_{22}}{\mu} +
    \mu\Dsq{V_{22}}{\nu_3}=0,
  \end{gathered}
\end{equation*}
having set \( V_{33}(\mu)=\mu \).

As the following example shows, it is easy to find local (incomplete)
solutions to these equations that have full holonomy.

\begin{example}
  \label{ex:poly-ex}
  By writing down \( (\nu,\mu) \) as a power series and
  solving~\eqref{eq:PDE-diag}, we get solutions on trivial bundles
  \( T^3\times \cU \), where \( \cU\subset\bR^4 \) is an appropriate
  open subset.  As an example of such a solution we can take
  \begin{equation*}
    V_{11}(\nu_2,\nu_3,\mu)=2\mu^5-15\mu^2\nu_3^2-5\nu_2^2,\quad
    V_{22}(\nu_3,\mu)=\mu^3-3\nu_3^2,\quad
    V_{33}(\mu)=\mu.
  \end{equation*}
  As in Example~\ref{ex:mu-dep}, one checks by explicit computations
  that the associated metric has (restricted) holonomy equal
  to~\( \G_2 \).
\end{example}

\providecommand{\bysame}{\leavevmode\hbox to3em{\hrulefill}\thinspace}
\providecommand{\MR}{\relax\ifhmode\unskip\space\fi MR }
\providecommand{\MRhref}[2]{%
  \href{http://www.ams.org/mathscinet-getitem?mr=#1}{#2}
}
\providecommand{\href}[2]{#2}

\end{document}